\pgfplotsset{compat=1.9}
\newtheorem{thm}{Theorem}[section]
\newtheorem{lem}[thm]{Lemma}
\newtheorem{propn}[thm]{Proposition}
\newtheorem{cor}[thm]{Corollary}
\newtheorem{defn}[thm]{Definition}
\newtheorem{eg}[thm]{Example}
\newtheorem{rem}[thm]{Remark}
\renewcommand{\equiv}{:=}
\newcommand{\Ball}{{\mathbb{B}}}
\newcommand{\Euclid}{\mathcal{E}}
\newcommand{\NN}{\mathbb{N}}
\newcommand{\Rbb}{\mathbb{R}}
\newcommand{\abar}{{\overline{a}}}
\newcommand{\bbar}{{\overline{b}}}
\newcommand{\ubar}{{\overline{u}}}
\newcommand{\xbar}{{\overline{x}}}
\newcommand{\xtilde}{{\widetilde{x}}}
\newcommand{\ybar}{{\overline{y}}}
\newcommand{\ytilde}{{\widetilde{y}}}
\newcommand{\epsilonbar}{{\overline{\epsilon}}}
\newcommand{\omegabar}{{\overline{\omega}}}
\newcommand{\zetabar}{{\bar{\zeta}}}
\newcommand{\epsilontilde}{{\widetilde{\epsilon}}}
\newcommand{\Zcal}{\mathcal{Z}}
\newcommand{\Mcal}{\mathcal{M}}
\newcommand{\Ycal}{{\mathcal{Y}}}
\newcommand{\paren}[1]{\left(#1\right)}
\newcommand{\klam}[1]{\left\{#1\right\}}
\newcommand{\set}[2]{\left\{#1\,\left|\,#2\right.\right\}}
\newcommand{\ip}[2]{\left\langle #1,#2\right\rangle}
\newcommand{\norm}[1]{\left\|#1\right\|}
\newcommand{\mmap}[3]{#1:\,#2\rightrightarrows #3\,}
\DeclareMathOperator{\Id}{Id}
\DeclareMathOperator{\id}{Id}
\newcommand {\Limsup} {\mathop{{\textrm{ Lim\,sup}}\,}}
\DeclareMathOperator{\dist}{dist}
\DeclareMathOperator{\cone}{{cone}}
\DeclareMathOperator{\gph}{gph}
\DeclareMathOperator{\Fix}{\mathsf{Fix}\,}
\newcommand{\ncone}[1]{{N}_{#1}}
\newcommand{\pncone}[1]{N^{{\mathrm{ P}}}_{#1}} 
\newcommand{\raar}{{T_{\mathrm{DR}\lambda}}}
\newcommand{\DR}{{T_{\mathrm{DR}}}}
\title{Convergence Analysis of the Relaxed Douglas-Rachford Algorithm}
\author{D. Russell Luke  
	\thanks{Institut für Numerische und Angewandte Mathematik, Universität Göttingen, Lotzestr. 16-18, 37083 Göttingen, Germany, Email: \href{mailto:r.luke@math.uni-goettingen.de}{r.luke@math.uni-goettingen.de}. The research of DRL was supported in part by the German Research Foundation grant SFB755-C02 and in part by the German Research Foundation grant GRK2088-B5.}
\and Anna-Lena Martins	
	\thanks{Institut für Numerische und Angewandte Mathematik, Universität Göttingen, Lotzestr. 16-18, 37083 Göttingen, Germany, Email: \href{mailto:a.martins@math.uni-goettingen.de}{a.martins@math.uni-goettingen.de}. The research of A.-L.M. was supported by the German Research Foundation grant SFB755-C02.}}
\date{\today}
\begin{document}

\maketitle
\begin{abstract}
	Motivated by nonconvex, inconsistent feasibility problems in imaging, the relaxed alternating averaged reflections
	algorithm, or relaxed Douglas-Rachford algorithm (DR$\lambda$), was first proposed over a decade ago. Convergence
	results for this algorithm are limited either to convex feasibility or consistent nonconvex feasibility with
	strong assumptions on the regularity of the underlying sets.  Using an
	analytical framework depending only on metric subregularity and pointwise almost averagedness,
	we analyze the convergence behavior of DR$\lambda$ for feasibility problems that are both nonconvex and inconsistent.
	We introduce a new type of regularity of sets, called super-regular at a distance, to establish
	sufficient conditions for local linear convergence of the corresponding sequence. These
	results subsume and extend existing results for this algorithm.
\end{abstract}
\noindent{\bfseries Key words. }
{super-regular, inconsistent feasibility problem, projection, relaxed averaged alternating reflections, fixed point, linear convergence,  metric subregularity, nonconvex, subtransversality}

\noindent{\bfseries Mathematics Subject Classification: }
{65K10, 49K40, 49M05, 65K05, 90C26, 49M20, 49J53}

%
%


\section{Introduction}

The {\em feasibility problem} consists of finding a common point in a collection of 
closed sets. If no such common point exists, the feasibility problem 
is called {\em inconsistent} and one seeks instead an
adequate approximation to the problem. 
Typically feasibility problems are solved by projection based algorithms. Among these
are von Neumann's alternating projections  \cite{Neumann50}, 
and its many set version, the
cyclic projection algorithm, or averaged projections and, 
in the case of two-set feasibility, the Douglas-Rachford 
algorithm \cite{Douglas} as formulated by Lions and Mercier \cite{LionMerc}. 

Alternating and cyclic projections have long been standard iterative procedures.
They are stable and reliable in the sense that they always seem to converge to 
a limit cycle, though the cycle is not 
always desirable or easy to interpret \cite{BailComComi2012}. 
Because it has so many different formulations, the Douglas-Rachford algorithm 
has been rediscovered many times and has become quite popular in the 
last decade.  This algorithm has many curious features.  The first 
of which is that the iterates do not, in general, converge to solutions to 
the target feasibility problem, when they converge at all.  The second 
unusual feature of the algorithm is that it cannot converge if the feasibility
problem is inconsistent.  For convex feasibility the iterates {\em diverge} 
in the direction of the gap between the sets \cite{LionMerc, Eckstein1992, BauComLuk2004}.  
In the convex setting this is not too worrisome, since 
the {\em shadows} of the iterates, defined as the projection of the iterates
onto one of the sets (the ``inner set''), converge to a {\em best approximation
point} \cite{BauComLuk2004, BauMou16b}.  For  consistent nonconvex 
feasibility, Hesse and Luke \cite{HesseLuk2013} were the first to prove meaningful
local convergence results for Douglas-Rachford.  This was quickly followed by several 
generalizations \cite{BauschkeNoll14, Phan16, LiPong16, LukNguTam17, min18}.  
For inconsistent feasibility, since Douglas-Rachford cannot converge, weak convergence
follows generically if the iterates are bounded, but otherwise
meaningful results appear to only be possible for relaxations of the Douglas-Rachford 
algorithm.  

To address failure of convergence of Douglas-Rachford for inconsistent feasibility, 
Luke introduced the relaxed Douglas-Rachford algorithm  
in \cite{Luke2005} with a proof of convergence for convex feasibility -- 
inconsistent and consistent.  Given $x^0 \in \Euclid$ and $\lambda\in (0,1)$, for $k=0,1,2,\dots,$ 
the iteration takes the form 
\begin{equation}\label{raar}
x^{k+1}\in \raar(x^k)\equiv \set{\frac{\lambda}{2}\paren{R_A(2b-x^k) +x^k}+\paren{1-\lambda}b}{ \ b \in P_B(x^k)}.
\end{equation}
Here $R_A$ is the {\em reflector} across the set $A$ and $P_B$ is the {\em projector} onto $B$
(see the next subsection for details).  
For $\lambda=1$ this mapping is the Douglas-Rachford fixed point mapping, 
$\DR \equiv \frac{1}{2}\paren{R_AR_B +\Id}$, where $\Id $ denotes the identity.  
From here on, we will refer to the algorithm as DR$\lambda$.
A characterization of the fixed points in the nonconvex 
inconsistent case and a first attempt at a local convergence result was given in  \cite{Luke2008}.  
The analysis required one of the sets to be convex 
and the other set to be {\em prox-regular}.   More recently, Li and Pong \cite{LiPong16} 
rediscovered this algorithm and showed 
convergence results when both sets are closed, one set is convex, at least one of the sets is 
compact and the intersection is nonempty (i.e. consistent feasibility).  
When, in addition, both sets are semi-algebraic, they showed global convergence, 
\cite[Corollary 1]{LiPong16}.  Under 
still stronger assumptions (that one of the sets is linear, the other semi-algebraic, and that the intersection is 
{\em strongly regular \cite{LewLukMal2009}}), local linear convergence can be shown, 
\cite[Proposition 2]{LiPong16}.  Minh and Phan \cite[Theorem 5.8]{min18} show local linear 
convergence of a generalized Douglas-Rachford algorithm (which includes $\raar$ as a special case)
when the two sets are super-regular, and have sufficiently regular nonempty intersection.  
Noteworthy here is that none of these approaches can explain convergence in the case of two affine halfspaces with empty intersection, much less for any other inconsistent feasibility problem, convex or otherwise.  

In the present work, we  extend the results above  
to inconsistent feasibility for sets with the weakest regularity assumptions to date. 
We introduce in Section \ref{s:srat} a new kind of set-regularity, called {\em super-regularity at a 
distance} that will be our only assumption on the sets themselves.  
Super-regularity at a distance falls into the spectrum of other regularity notions 
like $\epsilon$-subregularity (cf. \cite{Kruger2018,DanLukTam19}) and, as the name suggests, 
super-regularity \cite{LewLukMal2009}.  The innovation of this characterization 
is that it allows one to describe the regularity of a set relative to a point not in 
that set; that is,   it characterizes how the set looks from the outside.  
This is especially 
important for the relaxed Douglas-Rachford algorithm, whose fixed points 
do not usually lie in any of the sets.  As in \cite{Luke2008}, however, 
the projections of the fixed points are shown to include best approximation points (Theorem \ref{thm:fixed points}
and Corollary \ref{cor: fixed point and its gap}).

Following the framework established in \cite{LukNguTam17}, in Section \ref{s: conv analysis} 
we prove local linear convergence of the algorithm under additional assumptions on 
the regularity of the {\em collection of sets} taken together.  Unlike 
previous notions of regularity of collections of sets \cite{Kruger2018}, the 
sets in the present analysis need not have points in common.  
The analysis of \cite{LukNguTam17} uses two properties of 
fixed point mappings.  The first property, 
{\em pointwise almost averagedness}, follows from the regularity 
of the sets and, as shown in \cite[Proposition 4]{LukTebTha18} is 
an important ingredient in guaranteeing convergence of the 
iterates to fixed points.   In Theorem \ref{thm: singlevalued at fixed points}  
we establish that the $\raar$ mapping is almost averaged at its fixed points 
when the sets $A$ and $B$ are super-regular at a distance.  
The second property, {\em metric subregularity} of the 
fixed point mapping at its fixed points, was subsequently shown in 
\cite[Theorem 2]{LukTebTha18} to be necessary for local linear 
convergence.   In the context of feasibility, this property becomes 
{\em subtransversality} of the sets in relation to each other, plus 
an additional technical condition.  Under these conditions 
\cite[Theorem 3.2]{LukNguTam17} establishes local linear convergence of 
cyclic projections onto 
sufficiently regular sets that need not have points in common.  
Following their approach we show in \cref{thm:conv raar} that a similar result is true 
for DR$\lambda$. 
We conclude our study with a demonstration of this theory in \cref{sec:examples} 
via several elementary examples that allow 
explicit evaluation of the relevant constants.

\subsection{Notation and Definitions}\label{subsec:notation}
Our notation is standard in variational analysis.  
Our setting is a finite dimensional Euclidean space, denoted $\Euclid$, with 
inner product $\ip{\cdot}{\cdot}$ and induced norm $\norm{\cdot}$. 
We denote by $\Ball$ the open unit ball, and by $\Ball_{\delta}(x)$ the open ball with radius $\delta$
around the point $x$. 
The model we consider is 
a feasibility problem, that is, the problem of finding points common to closed subsets of $\Euclid$, or reasonable
substitutions thereof when the sets have no points in common.  
The distance of a point $x$ to a set $C$ is  $\dist(x,C)\equiv \inf_{y\in C}\|x-y\|$ and the {\em projector} onto $C$ is the 
{\em set-valued mapping} $P_C(x)\equiv\set{z}{\|z-x\|=\dist(x,C)}$.  A {\em projection} is a selection from $P_C(x)$. The 
{\em reflector} of a point $x$ across $C$ is $R_C(x)\equiv 2P_C(x)-x$, and a {\em reflection} is a selection from this 
set-valued mapping.  For the purposes of this paper, we define the normal cone to the set $C$ in terms of the 
projector onto that set.
\begin{defn}[normal cones]
  	Let $C\subseteq\Euclid$ and let $\bar{x}\in C$.
	\begin{enumerate}[(i)]
		\item The \emph{proximal normal cone} of $C$ at $\bar{x}$ is defined by
		 	$$ N^{\mathrm P}_C(\bar{x}) = \cone\left(P^{-1}_C\bar{x}-\bar{x}\right).$$
        	Equivalently, $\bar{x}^*\in N^{\mathrm P}_C(\bar{x})$ whenever there exists $\sigma\geq 0$ such that
			$$\langle \bar{x}^*,x-\bar{x}\rangle\leq \sigma\|x-\bar{x}\|^2 \quad(\forall x\in C).$$
		
		\item The \emph{limiting (proximal) normal cone} of $C$ at $\bar{x}$ is defined by
			$$ \ncone{C}(\bar{x}) = \Limsup_{x\to\bar{x}}\pncone{C}(x), $$
			where the limit superior is taken in the sense of \emph{Painlev\'e--Kuratowski outer limit}.
	\end{enumerate}
 	 When $\bar{x}\not\in C$ all normal cones at $\bar{x}$ are empty (by definition).
\end{defn}

\section{Super-regularity at a Distance}\label{s:srat}
We limit our attention in this study to {\em super-regular} sets and their extension to sets with 
the corresponding properties relative to points {\em not} belonging to the sets. 
\begin{defn}[{super-regularity {\cite[Definition~4.3]{LewLukMal2009}}}]\label{Def-super-reg}
Let $\Omega\subseteq\mathbb{R}^{n}$ and $\xbar\in \Omega$. The set $\Omega$
is said to be \emph{super-regular at $\xbar$}
if it is locally closed at
$\xbar$ and for every $\epsilon>0$ there is a $\delta>0$   
such that for all $(x, 0)\in\gph\ncone{\Omega}\cap \left\{\left(\Ball_\delta(\xbar), 0\right)\right\}$
\begin{equation}
\left\langle y'-y,
x-y\right\rangle
\leq\varepsilon\,||y'-y||\Vert x-y\Vert,\quad\paren{\forall y'\in  \Ball_\delta(\xbar)}
\paren{\forall y\in P_{\Omega}(y')}. \label{e:sup-reg}
\end{equation}
\end{defn}
\noindent Rewriting the above leads the the following equivalent 
characterization of super-regularity, which might be  more 
useful for our purposes.
\begin{propn}\cite[Proposition~4.4]{LewLukMal2009}\label{t:super-reg}The set $\Omega\subseteq\Euclid$
is \emph{super-regular} at $\xbar\in \Omega$ if and only if it is locally closed at
$\xbar$ and for every  $\varepsilon>0$ there exists $\delta>0$
such that
\begin{eqnarray}
&&\left\langle v,
x-y\right\rangle
\leq\varepsilon\,||v||\Vert x-y\Vert, \nonumber\\
&&\qquad\qquad\qquad\qquad\paren{\forall (x,v)\in 
\gph\ncone{\Omega}\cap\left(\Ball_{\delta}(\xbar)\times \Euclid\right)}\paren{ 
\forall y\in\Omega\cap\Ball_{\delta}(\xbar)}. \label{e:sup-reg2}
\end{eqnarray}
\end{propn}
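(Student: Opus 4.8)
The plan is to read off from the two statements that they assert literally the same Cauchy--Schwarz-type inequality $\ip{v}{x-y}\le\epsilon\norm{v}\norm{x-y}$, the only difference being the class of normal vectors over which it is quantified. In \eqref{e:sup-reg} the vector $v\equiv y'-y$ is a proximal normal of the special, projection-realized form with $y\in P_{\Omega}(y')$ and $y'\in\Ball_\delta(\xbar)$, whereas in \eqref{e:sup-reg2} the vector $v$ ranges over \emph{all} limiting normals $v\in\ncone{\Omega}(x)$ at base points $x\in\Omega\cap\Ball_\delta(\xbar)$. A preliminary observation that streamlines everything: the membership $(x,0)\in\gph\ncone{\Omega}$ in \eqref{e:sup-reg} is equivalent to $x\in\Omega$, since $0\in\ncone{\Omega}(x)$ exactly when $x\in\Omega$; hence the quantifier over $(x,0)\in\gph\ncone{\Omega}\cap\paren{\Ball_\delta(\xbar)\times\klam{0}}$ is just the quantifier over $x\in\Omega\cap\Ball_\delta(\xbar)$, and the two inequalities coincide once $y$ (respectively $x$) is identified with the base point of the normal vector.

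For the implication \eqref{e:sup-reg2}$\Rightarrow$\eqref{e:sup-reg}, I would fix $\epsilon>0$, take the $\delta$ furnished by \eqref{e:sup-reg2}, and verify \eqref{e:sup-reg} on $\Ball_{\delta/2}(\xbar)$. Given $y'\in\Ball_{\delta/2}(\xbar)$ and $y\in P_{\Omega}(y')$, the vector $v\equiv y'-y$ is a proximal, hence limiting, normal to $\Omega$ at $y$, and $\norm{y-y'}=\dist(y',\Omega)\le\norm{y'-\xbar}$ forces $y\in\Ball_\delta(\xbar)$. Applying \eqref{e:sup-reg2} with base point $y$ and normal $v$ then yields the inequality in \eqref{e:sup-reg} for every $x\in\Omega\cap\Ball_{\delta/2}(\xbar)$. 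This direction is essentially bookkeeping.

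The substantive direction is \eqref{e:sup-reg}$\Rightarrow$\eqref{e:sup-reg2}, which I would carry out in two steps. First I would upgrade \eqref{e:sup-reg} from projection-realized proximal normals to \emph{all} proximal normals: if $v\in\pncone{\Omega}(\hat x)\setminus\klam{0}$ with $\hat x\in\Omega\cap\Ball_{\delta/2}(\xbar)$, then by the cone representation $\pncone{\Omega}(\hat x)=\cone\paren{P^{-1}_\Omega\hat x-\hat x}$ we may write $v=s(w-\hat x)$ with $s>0$ and $\hat x\in P_{\Omega}(w)$. Every point $y'\equiv\hat x+\tau(w-\hat x)$ on the segment $[\hat x,w]$ still projects onto $\hat x$, and choosing $\tau>0$ small enough keeps $y'\in\Ball_\delta(\xbar)$; so \eqref{e:sup-reg} applies with $y=\hat x$ and $y'-y$ a positive multiple of $v$, and positive homogeneity of the inequality in its normal argument gives $\ip{v}{x-\hat x}\le\epsilon\norm{v}\norm{x-\hat x}$ for all $x\in\Omega\cap\Ball_\delta(\xbar)$. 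Second, I would pass to limiting normals: for $x_0\in\Omega\cap\Ball_{\delta/4}(\xbar)$ and $v\in\ncone{\Omega}(x_0)$, write $v=\lim_k v_k$ with $v_k\in\pncone{\Omega}(x_k)$ and $x_k\to x_0$ (so $x_k\in\Ball_{\delta/2}(\xbar)$ for large $k$), apply the first step to each $v_k$, and take $k\to\infty$ using continuity of the inner product and norm. This establishes \eqref{e:sup-reg2} on $\Ball_{\delta/4}(\xbar)$.

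The main obstacle is the first step of the hard direction: one must produce, for an arbitrary proximal normal that need not itself be realized by a projection from $\Ball_\delta(\xbar)$, an admissible pair $(y',y)$ with $y'\in\Ball_\delta(\xbar)$ recovering $v$ up to positive scaling. The geometric fact that the entire segment $[\hat x,w]$ projects to $\hat x$ whenever $\hat x\in P_{\Omega}(w)$, combined with the positive homogeneity of \eqref{e:sup-reg} in $v$, is exactly what makes this possible; the subsequent limiting argument and the radius bookkeeping are routine.
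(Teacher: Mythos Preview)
The paper does not supply its own proof of this proposition; it simply quotes \cite[Proposition~4.4]{LewLukMal2009}. Your argument---observing that the two formulations differ only in the class of normal vectors considered, then upgrading from projection-realized proximal normals to arbitrary proximal normals via the segment trick and positive homogeneity, and finally passing to limiting normals by Painlev\'e--Kuratowski closure---is precisely the standard proof given in that reference and is correct.
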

To extend super-regularity to super-regularity at a distance, we employ the more general framework of 
{\em $\epsilon$-subregular sets} first introduced in \cite{Kruger2018}.  The following terminology 
follows \cite{DanLukTam19}.   
\begin{defn}[$\epsilon$-subregularity]\cite[Definition 2.2]{DanLukTam19}
	A set $\Omega \subset \Euclid$ is {\em $\epsilon$-subregular relative to $\Lambda\subset \Euclid$ at $\xbar$ for 
	$(x,v)\in \gph \ncone {\Omega}$} if it is locally closed at $\xbar$ and there
	exists an $\epsilon>0$ together with a neighborhood $U_\epsilon$ of $\xbar$ such that
	\begin{align}\label{eq: epsilon subregularity}
		\ip{v-(y'-y)}{y-x}\leq \epsilon\norm{v-(y'-y)}\norm{y-x}\quad (\forall y'\in \Lambda\cap U_\epsilon)(\forall y\in P_\Omega(y')).
	\end{align}
	$\Omega$ is {\em subregular relative to $\Lambda$ at $\xbar$ for $(x,v)\in \gph \ncone {\Omega}$}
	if it is locally closed and for all $\epsilon>0$ there exists $U_\epsilon$ such that 
	\eqref{eq: epsilon subregularity} holds.
\end{defn}
\begin{defn}[super-regularity at a distance]\label{defn: super-reg+}
	A set $\Omega\subset \Euclid$ is called {\em $\epsilon$-super-regular at a distance relative
	to $\Lambda\subset \Euclid$ at $\xbar$} if it is $\epsilon$-subregular relative
	to $\Lambda$ at $\xbar$ for all $(x,v)\in V_\epsilon$ where
	\begin{align}\label{eq:def V epsilon}
		V_\epsilon\equiv \set{(x,v)\in \gph \pncone {\Omega}}{x+v\in U_\epsilon,~x\in P_\Omega(x+v)},
	\end{align}
	and $U_\epsilon$ is a neighborhood of $\xbar$ with respect to an $\epsilon>0$.
	The set $\Omega$ is called {\em super-regular at a distance relative
	to $\Lambda$ at $\xbar$} if it is $\epsilon$-super-regular at a distance relative
	to $\Lambda$ at $\xbar$ for all $\epsilon>0$.
\end{defn}
Note that implicitly $U_\epsilon\cap \Lambda\neq\emptyset$ for all 
$\epsilon>0$.  
\begin{rem}[super-regularity at a distance relative to $\Euclid$ implies super-regularity]
	Being super-regular at a distance relative to $\Lambda=\Euclid$ at 
	some point $\xbar\in \Omega$ implies that the set is 
	super-regular at $\xbar$. To see this, let $\Omega$ 
	be super-regular at a distance relative to
	$\Lambda=\Euclid$ at $\xbar \in \Omega$. For fixed $
	\epsilon>0$ note that $(x,0)\in V_\epsilon$ for all
	$x\in \Omega\cap U_\epsilon$. With these, \eqref{eq: epsilon subregularity} becomes
	\begin{align}\label{eq: rem super-reg}
		\ip{y-y'}{y-x}\leq \epsilon\norm{y-y'}\norm{y-x}
	\end{align}
	for all $y'\in \Lambda\cap U_\epsilon$, $y\in P_\Omega(y')$ 
	and for all $x\in U_\epsilon\cap \Omega$.
	For sure, there exists an $\delta>0$ such that $\Ball_\delta\subset U_\epsilon$. 
	Moreover, since $\Lambda=\Euclid$ \eqref{eq: rem super-reg} holds for all
	$y'\in U_\epsilon$, $y\in P_\Omega(y')$ and for all $x\in U_\epsilon\cap \Omega$,
	which is by \cref{Def-super-reg} super-regularity of $\Omega$ at $\xbar$.
\end{rem} 
\begin{propn}[convex sets are super-regular at a distance]\label{prop: cxv set is super-reg at dist}
	Let $\Omega\subset \Euclid$ be convex and closed. Then $\Omega$ is super-regular at a distance
	relative to $\Lambda=\Euclid$ at any $\xbar\in \Omega$.
\end{propn}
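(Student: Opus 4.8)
The plan is to show that for a closed convex $\Omega$ the left-hand side of the $\epsilon$-subregularity inequality \eqref{eq: epsilon subregularity} is in fact \emph{nonpositive}, so that the inequality holds for \emph{every} $\epsilon>0$ with the trivial choice $U_\epsilon=\Euclid$ (which is a neighborhood of every $\xbar$). In particular the estimate is global and uniform in $\xbar\in\Omega$, which is exactly what super-regularity at a distance relative to $\Lambda=\Euclid$ at any $\xbar$ requires.

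First I would unwind the definitions. Fix $\epsilon>0$ and let $(x,v)\in V_\epsilon$ be arbitrary. By \eqref{eq:def V epsilon} this means $x\in P_\Omega(x+v)$, i.e. $x$ is a projection of $x+v$ onto $\Omega$. Because $\Omega$ is closed and convex, the projector $P_\Omega$ is single-valued and the proximal normal cone $\pncone{\Omega}(x)$ appearing in \eqref{eq:def V epsilon} coincides with the usual convex normal cone; consequently $v=(x+v)-x$ satisfies the variational inequality $\ip{v}{z-x}\leq 0$ for all $z\in\Omega$. Similarly, for any $y'\in\Lambda\cap U_\epsilon=U_\epsilon$ the unique projection $y=P_\Omega(y')$ satisfies $\ip{y'-y}{z-y}\leq 0$ for all $z\in\Omega$.

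The key step is then to expand the inner product in \eqref{eq: epsilon subregularity}, namely $\ip{v-(y'-y)}{y-x}=\ip{v}{y-x}-\ip{y'-y}{y-x}$, and to estimate each term using the two variational inequalities. Choosing $z=y\in\Omega$ in the first gives $\ip{v}{y-x}\leq 0$, while choosing $z=x\in\Omega$ in the second gives $\ip{y'-y}{x-y}\leq 0$, i.e. $-\ip{y'-y}{y-x}\leq 0$. Adding these yields $\ip{v-(y'-y)}{y-x}\leq 0\leq \epsilon\norm{v-(y'-y)}\norm{y-x}$, which is precisely \eqref{eq: epsilon subregularity}. Since $(x,v)\in V_\epsilon$ and $y'\in U_\epsilon$ were arbitrary, $\Omega$ is $\epsilon$-super-regular at a distance relative to $\Euclid$ at $\xbar$; and as $\epsilon>0$ was arbitrary, this delivers super-regularity at a distance.

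There is no genuine analytic obstacle here: the statement amounts to the observation that the two monotonicity (firm-nonexpansiveness) inequalities for projections onto convex sets combine to force the bracketed quantity to be nonpositive. The only points that require care are bookkeeping: confirming that for a convex set the proximal normal cone used in \eqref{eq:def V epsilon} agrees with the convex normal cone, so that the variational characterization of the projection legitimately applies, and noting that because the estimate holds globally we may take $U_\epsilon=\Euclid$ with the conclusion uniform in $\xbar$. I would not expect the degenerate cases $v=0$ or $y'=y$ to cause any trouble, since the relevant inner product then vanishes and the inequality is immediate.
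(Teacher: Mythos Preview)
Your proof is correct and follows essentially the same approach as the paper: split $\ip{v-(y'-y)}{y-x}$ into two pieces and bound each by zero using the variational inequality for normal vectors to a convex set. The paper phrases this in terms of the normal-cone inequality $\ip{v}{x-y}\leq 0$ for $v\in\ncone{\Omega}(y)$ rather than the projection characterization, but the content is identical.
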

\begin{proof}
	Fix $\xbar\in \Omega$. For convex sets $\Omega$ one has
	\begin{align*}
		\ip{v}{x-y}\leq 0\quad\paren{\forall x,y\in \Omega}\paren{\forall v\in \ncone{\Omega}(y)}.
	\end{align*}
	Thus, for any neighborhood $U_\epsilon\subset \Euclid$ of $\xbar$, $y'\in U$, $y\in P_\Omega(y')$, which implies
	that $y'-y\in \ncone \Omega (y)$, we deduce that $\ip{y'-y}{x-y}\leq 0$ and thus	
	\begin{align*}
		\ip{v-(y'-y)}{y-x}\leq 0\quad (\forall y'\in \Lambda \cap U_\epsilon)(\forall y\in P_\Omega(y')).
	\end{align*}
	This shows super-regularity at a distance of $\Omega$ relative to $\Euclid$ at all $\xbar\in \Omega$ as claimed.
\end{proof}
\begin{eg}[circle]\label{eg: circle super-reg}
	Consider the set 
	\begin{align*}
		\Omega\equiv \set{(x_1,x_2)\in \Rbb^2}{x_1^2+x_2^2=1}.
	\end{align*}
	This set is $\epsilon$-subregular relative to $\Lambda=P_\Omega^{-1}(\xbar)$ at any $\xbar\in \Omega$ for all
	$(\xbar, v )\in \gph \ncone \Omega$ with $\epsilon=0$ (which implies that $\Omega$ is in fact subregular relative to $\Lambda$
	for all $(\xbar, v )\in \gph \ncone \Omega$). 
    Indeed, for any  $\delta\in(0,1)$ we have, for any $y'\in \Lambda \cap \Ball_\delta(\xbar)$, that 
    $y\in P_\Omega(y')$ is given by $y=\xbar$ and  \eqref{eq: epsilon subregularity} specializes to 
    \begin{align*}
		\ip{v-\paren{y'-y}}{y-\xbar}=\ip{v-\paren{y'-y}}{\xbar-\xbar}=0 \quad (\forall y'\in \Lambda \cap \Ball_\delta(\xbar)  
                (\forall v\in \ncone \Omega (\xbar)).
	\end{align*}
	Moreover, the set $\Omega$ is super-regular at a distance relative to $\Lambda=P_\Omega^{-1}(\xbar)$ at 
	any $\xbar \in \Omega$. To see this, we will first show that $\Omega$ is $\epsilon$-super-regular
	at a distance relative to $P_\Omega^{-1}(\xbar)$ at $\xbar$ for any $\epsilon\in (0,0.5)$.
	Fix a $\epsilon\in(0,0.5)$ and set $\delta=2\epsilon$. For any $w\in \ncone \Omega(\xbar)$
	and $x\in \Omega\cap\Ball_\delta(\xbar)$ it holds 
	$\cos \angle \paren{w,x-\xbar}\leq \cos \angle \paren{-\xbar,x-\xbar}$. By the law of cosine we conclude
	$\cos \angle \paren{-\xbar,\xbar-x}=\norm{\xbar-x}/2<\delta/2=\epsilon$. Since 
	$v-\paren{y'-\xbar}\in \ncone \Omega(\xbar)$ for all $y'\in \Lambda \cap \Ball_\delta(\xbar)$,
    by the definition of the inner product on $\Rbb^2$ we deduce
	\begin{align*}
		&\ip{v-\paren{y'-\xbar}}{\xbar-x}\\
		=&\cos \angle \paren{v-\paren{y'-\xbar},\xbar-x}\norm{v-\paren{y'-\xbar}}\norm{\xbar-x}\\
		\leq &\epsilon \norm{v-\paren{y'-\xbar}}\norm{\xbar-x}
	     \qquad(\forall y'\in \Lambda \cap \Ball_\delta(\xbar))(\forall (x,v)\in V_\delta)
	\end{align*} 
        where 
	\begin{align*}
		V_\delta\equiv \set{(x,v)\in \gph \pncone {\Omega}}{x+v\in \Ball_\delta(\xbar),~x\in P_\Omega(x+v)},
	\end{align*}
   	which shows that $\Omega$ is $\epsilon$-super-regular
	at a distance relative to $P_\Omega^{-1}(\xbar)$ at $\xbar$ for any $\epsilon\in (0,0.5)$.
	Likewise, the same is true for any $\epsilon>0.5$ when taking a ball with radius 
	$\delta$ around $\xbar$, where $\delta <1$. Thus, $\Omega$ is super-regular
	relative to $P_\Omega^{-1}(\xbar)$ at $\xbar$.
	
	In fact, we can even enlarge our neighborhood from a ball to a tube in radial direction.
	Fix $\xbar\in \Omega$, $\epsilon>0$ and some $\delta\in (0,1)$ such that the above construction
	is satisfied. Then
	\begin{align*}
		U\equiv \bigcup_{\substack{z\in P_\Omega^{-1}(\xbar)\\ \norm{z}\geq 1}}\Ball_\delta(z)
	\end{align*}
	is a neighborhood for $\xbar$ such that $\epsilon$-super-regularity relative to
	$\Lambda=P_\Omega^{-1}(\xbar)$ is satisfied for $\Omega$. Fortunately, our violation $\epsilon$ will not 
	be worse compared to the neighborhood being a ball with radius $\delta$ around $\xbar$.
	This allows us to include more points in $\Lambda\cap U$ without violating 
	\eqref{eq: epsilon subregularity}.
	\begin{figure}
		\center
		\begin{tikzpicture}[scale=1.1]
			\draw[thick,->] (-4,0) -- (2.5,0);
			\draw[thick,->] (0,-2.5) -- (0,2.5);
			\draw (0,0) circle (2cm);
			\draw (1.4,1) node {$\Omega$};
			\draw[dashed] (-2,0) circle (1.5cm);
			\draw[dashed,-] (-4,1.5) -- (-2,1.5);
			\draw[dashed,-] (-4,-1.5) -- (-2,-1.5);
			\fill (-2,0) circle (0.05) node[above right] {$\bar{x}$};
			\fill (-3.2,0) circle (0.05) node[above right] {$y'$};
			\fill (-1.8793852415718,0.68404028665134) circle (0.05) node[below right] {$x$};
			\fill (-2.8190778623577,1.026060429977) circle (0.05) node[above right] {$x+v$};
			\draw [->] (-1.8793852415718,0.68404028665134) -- (-2.8190778623577,1.026060429977);
			\draw (-2.6,-0.5) node[right] {$\delta$};
			\draw (-2,0)-- (-3.1490666646785,-0.9641814145298);
		\end{tikzpicture}
		\caption{\cref{eg: circle super-reg}}
	\end{figure}
\end{eg}
\begin{propn}[characterization of super-regularity at a distance]\label{prop: charac super reg sets relative}$~$
	\begin{enumerate}[(i)]
		\item\label{prop: charac super reg sets relative part i}
 			A nonempty set $\Omega\subset \Euclid$ is $\epsilon$-super-regular 
 			at a distance relative to $\Lambda\subset \Euclid$ at $\xbar$ if and only if 
			there is a neighborhood $U_\epsilon$ of $\xbar$ such that 
			\begin{equation}
				\norm{x-y}^2\leq \epsilon\norm{\paren{y'-y}-\paren{x'-x}}\norm{x-y}+\ip{x'-y'}{x-y}
				 \quad (\forall y'\in U_\epsilon\cap \Lambda)(\forall y\in P_\Omega(y'))
			\end{equation}
			holds with $x'=x+v\in U_\epsilon$ for all $(x,v)\in V_\epsilon$ for $V_\epsilon$ defined by \eqref{eq:def V epsilon}.
		\item\label{prop: charac super reg sets relative part ii} 
			Let $\Omega \subset \Euclid$ be $\epsilon$-super-regular 
			at a distance relative to $\Lambda$ at $\xbar$. Then
			\begin{equation}
				\norm{x-y}\leq \epsilon \norm{\paren{y'-y}-\paren{x'-x}}+\norm{x'-y'}
				 \quad (\forall y'\in U_\epsilon\cap \Lambda)(\forall y\in P_\Omega(y'))
			\end{equation}
			holds with $x'=x+v\in U_\epsilon$ for all $(x,v)\in V_\epsilon$.
	\end{enumerate}
\end{propn}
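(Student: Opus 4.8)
The plan is to treat both parts as purely algebraic reformulations of the defining $\epsilon$-subregularity inequality \eqref{eq: epsilon subregularity}; no genuinely new analytic content is needed beyond the definitions and the Cauchy--Schwarz inequality. Throughout I fix $(x,v)\in V_\epsilon$ and set $x'=x+v$, so that by the definition \eqref{eq:def V epsilon} of $V_\epsilon$ one has $v=x'-x$ and $x'\in U_\epsilon$; likewise I fix $y'\in U_\epsilon\cap\Lambda$ and $y\in P_\Omega(y')$. The substitution $v=x'-x$ also shows $\norm{v-(y'-y)}=\norm{\paren{y'-y}-\paren{x'-x}}$, matching the norm appearing in the statement.

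For the first claim, the starting point is \eqref{eq: epsilon subregularity}. The pivotal observation is the unconditional vector identity
\begin{equation}
  x'-y'=\paren{x-y}+\paren{\paren{x'-x}-\paren{y'-y}},
\end{equation}
whose inner product with $x-y$ gives $\ip{x'-y'}{x-y}=\norm{x-y}^2+\ip{\paren{x'-x}-\paren{y'-y}}{x-y}$. Solving this for $\norm{x-y}^2$ and then bounding the term $-\ip{\paren{x'-x}-\paren{y'-y}}{x-y}$ by the right-hand side of \eqref{eq: epsilon subregularity} --- using $\ip{\cdot}{y-x}=-\ip{\cdot}{x-y}$ to align the orientation --- produces exactly the inequality of the first claim. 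Because the vector identity is an equality and the only inequality invoked is \eqref{eq: epsilon subregularity} itself, the chain of steps is reversible; this reversibility is what upgrades the implication to the asserted equivalence, and I would state it explicitly to secure the ``if'' direction.

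For the second claim I would start from the inequality just proved and apply Cauchy--Schwarz to the inner-product term, $\ip{x'-y'}{x-y}\leq\norm{x'-y'}\norm{x-y}$, to obtain
\begin{equation}
  \norm{x-y}^2\leq\epsilon\norm{\paren{y'-y}-\paren{x'-x}}\norm{x-y}+\norm{x'-y'}\norm{x-y}.
\end{equation}
If $\norm{x-y}=0$ the desired estimate holds trivially, its right-hand side being nonnegative; otherwise dividing through by $\norm{x-y}>0$ yields the claim.

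The arithmetic is elementary, so the only real care required is bookkeeping: keeping the orientation of the difference vectors $x-y$ versus $y-x$, and $\paren{x'-x}-\paren{y'-y}$ versus its negative, consistent throughout, and verifying in the first claim that each manipulation is genuinely an equivalence rather than a one-way bound, so that the biconditional --- and not merely one implication --- is obtained. No compactness, closedness, or limiting arguments enter; the statement is a direct translation of the pointwise inequality defining $\epsilon$-super-regularity at a distance.
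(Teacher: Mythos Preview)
Your proposal is correct and follows essentially the same approach as the paper: the paper decomposes $\norm{x-y}^2=\ip{x-y}{x-y}=\ip{(y'-y)-(x'-x)}{x-y}+\ip{x'-y'}{x-y}$ and then applies \eqref{eq: epsilon subregularity} for the forward direction, and reverses the algebra for the converse---exactly your vector-identity argument in different notation. Part (ii) is likewise obtained in the paper by Cauchy--Schwarz applied to $\ip{x'-y'}{x-y}$, matching your argument (your explicit treatment of the case $\norm{x-y}=0$ is a minor refinement the paper omits).
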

\begin{proof}
	\eqref{prop: charac super reg sets relative part i}. 
	Let $\Omega\subset \Euclid$ be $\epsilon$-super-regular 
 	at a distance relative to $\Lambda\subset \Euclid$ at $\xbar$. Then, for fixed $\epsilon>0$, there exists a neighborhood $U_\epsilon$
 	of $\xbar$ such that for all $(x,v)\in V_\epsilon$ for $V_\epsilon$ defined by \eqref{eq:def V epsilon} and $x'=x+v\in U_\epsilon$ 
 	the following hold:
	\begin{align*}
		\norm{x-y}^2=\ip{x-y}{x-y}&=\ip{y'-y-\paren{x'-x}}{x-y}+\ip{x'-y'}{x-y}\\
		&\leq \epsilon\norm{y'-y-\paren{x'-x}}\norm{x-y}+\ip{x'-y'}{x-y}.
	\end{align*}
	This proves the first part of the equivalence. For the remaining one let $U_\epsilon$ be a neighborhood of $\xbar$ such that 
	\begin{equation}\label{eq:second part}
		\norm{x-y}^2\leq \epsilon\norm{\paren{y'-y}-\paren{x'-x}}\norm{x-y}+\ip{x'-y'}{x-y}
			\quad (\forall y'\in U_\epsilon\cap \Lambda)(\forall y\in P_\Omega(y'))
	\end{equation}
	holds with $x'=x+v\in U_\epsilon$ for all $(x,v)\in V_\epsilon$ for $V_\epsilon$ defined by \eqref{eq:def V epsilon}.
	\eqref{eq:second part} is equivalent to 
	\begin{equation*}
		\ip{y'-y-\paren{x'-x}}{x-y}+\ip{x'-y'}{x-y}\leq \epsilon\norm{\paren{y'-y}-\paren{x'-x}}\norm{x-y}+\ip{x'-y'}{x-y},
	\end{equation*}
	$(\forall y'\in U_\epsilon\cap \Lambda)(\forall y\in P_\Omega(y'))$ by the calculations made before. Subtracting $\ip{x'-y'}{x-y}$
	from both sides and inserting $v=x'-x$ yields
	\begin{equation*}
		\ip{y'-y-v}{x-y}\leq \epsilon\norm{\paren{y'-y}-v}\norm{x-y},
		\quad (\forall y'\in U_\epsilon\cap \Lambda)(\forall y\in P_\Omega(y')).
	\end{equation*}
	Reordering the left-hand side we deduce the definition of $\epsilon$-super-regular for $\Omega$ at $\xbar$
	\begin{equation*}
		\ip{v-(y'-y)}{y-x}\leq \epsilon\norm{\paren{y'-y}-v}\norm{x-y},
		\quad (\forall y'\in U_\epsilon\cap \Lambda)(\forall y\in P_\Omega(y')).
	\end{equation*}
	
	\eqref{prop: charac super reg sets relative part ii}. The second part follows from
	\eqref{prop: charac super reg sets relative part i} by applying the Cauchy-Schwarz 
	inequality to the vectors $x'-y'$ and $x-y$.
\end{proof} 
	

\section{Properties of \texorpdfstring{$\raar$}{T {DRlambda}} and Characterization of its Fixed Points}\label{s: charac of fixed points}
Our convergence analysis is based on the framework established in \cite{LukNguTam17} and relies on two essential 
properties of fixed point mappings.  The first property describes the {\em expansiveness} of the mapping, 
or the violation of nonexpansiveness.  This is called {\em almost averaging} in \cref{d:eh}.  This property
also implies {\em single-valuedness} of the fixed point mapping at its fixed points 
\cref{thm: singlevalued at fixed points}.  The characterization of the fixed points is established in \cref{thm:fixed points}.  
In Section \ref{s: conv analysis} we discuss the second property,  {\em metric subregularity} \cref{def:2.17}, 
which describes the (one-sided) Lipschitz continuity of the inverse of the fixed point 
mapping at its fixed points.   When specialized to set feasibility, this takes on the more geometric 
property of {\em subtransversality} of the collection of sets \cref{def:3.6}.   
\subsection{Almost Averaged Mappings}\label{s:aa mappings}
\begin{defn}[almost nonexpansive/averaged mappings, \cite{LukNguTam17}, Definition 2.2]\label{d:eh}
Let $D$ be a nonempty subset of $\Euclid$ and let $T$ be a (set-valued) mapping from $D$ to $\Euclid$.
\begin{enumerate}[(i)]
	\item $T$ is said to be \emph{pointwise almost nonexpansive} on $D$ at $y \in D$ if there exists a constant
		$\epsilon \in [0,1)$ such that
		\begin{align}\label{eq:18}
			\norm{x^+-y^+}\leq \sqrt{1+\epsilon}\norm{x-y}\quad \forall \ y^+\in Ty\text{ and }\forall x^+\in Tx
			\text{ whenever }x \in D.
		\end{align}
		If (\ref{eq:18}) holds with $\epsilon=0$ then $T$ is called \emph{pointwise nonexpansive} at $y$ on $D$.
			
		If $T$ is pointwise (almost) nonexpansive on $D$ at every point $y \in D$ with the violation constant 
		$\epsilon$, then $T$ is said to be \emph{(almost) nonexpansive on $D$ 
		(with violation $\epsilon$)}.
	\item $T$ is called \emph{pointwise almost averaged on $D$ at $y$} if there is an averaging constant 
		$\alpha\in (0,1)$
		and a violation constant $\epsilon\in [0,1)$ such 
		that the mapping $\tilde{T}$ defined by
		\begin{align*}
			\tilde{T}=\tfrac{1}{\alpha}T+\tfrac{(\alpha-1)}{\alpha}\Id
		\end{align*}
		is almost nonexpansive at $y$ with violation $\epsilon/\alpha$.
	\end{enumerate}
\end{defn}
\begin{propn}[characterization of almost averaged operators]\label{prop: characterization of aa mappings}
	Let $\mmap T \Euclid \Euclid$, $U \subset \Euclid$ and let $\alpha \in (0,1)$. The
	following are equivalent.
	\begin{enumerate}[(i)]
		\item $T$ is pointwise almost averaged at $y$ on $U$ with violation $\epsilon$ and averaging constant $\alpha$.
		\item\label{eq:aa ii} $\paren{1-\frac{1}{\alpha}}\id+\frac{1}{\alpha}T$ is pointwise almost nonexpansive at $y$ on $U$ with
		violation $\epsilon/\alpha$.
		\item For all $x\in U$, $x^+\in T(x)$ and $y^+\in T(y)$ it holds that
		\begin{equation}
			\norm{x^+-y^+}^2\leq (1+\epsilon)\norm{x-y}^2-\frac{1-\alpha}{\alpha}\norm{\paren{x-x^+}-\paren{y-y^+}}.
		\end{equation}
	\end{enumerate}
	Therefore, if $T$ is pointwise almost averaged at $y$ on $U$ with violation $\epsilon$ and averaging constant
	$\alpha$ then $T$ is pointwise almost nonexpansive at $y$ on $U$ with violation at most $\epsilon$.
\end{propn}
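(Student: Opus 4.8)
The plan is to observe that the equivalence of (i) and (ii) is immediate from \cref{d:eh}, and then to establish (ii) $\Leftrightarrow$ (iii) by a direct expansion of the relevant squared norm. First, recalling \cref{d:eh}(ii), the operator $\tilde T = \frac{1}{\alpha}T + \frac{\alpha-1}{\alpha}\Id$ coincides with $(1-\frac{1}{\alpha})\Id + \frac{1}{\alpha}T$, so statement (i) --- that $T$ is pointwise almost averaged at $y$ on $U$ with constants $\alpha$ and $\epsilon$ --- is by definition exactly statement (ii), that $(1-\frac{1}{\alpha})\Id + \frac{1}{\alpha}T$ is pointwise almost nonexpansive at $y$ on $U$ with violation $\epsilon/\alpha$. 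No computation is required for this half.

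For (ii) $\Leftrightarrow$ (iii), fix $x\in U$, $x^+\in T(x)$, $y^+\in T(y)$, and write $\tilde x^+ = (1-\frac{1}{\alpha})x + \frac{1}{\alpha}x^+$ together with $\tilde y^+$ defined analogously. Setting $u:=x-y$ and $w:=x^+-y^+$ gives $\tilde x^+-\tilde y^+ = \frac{1}{\alpha}\big((\alpha-1)u+w\big)$. Almost nonexpansiveness of this operator with violation $\epsilon/\alpha$ then reads $\norm{\tilde x^+-\tilde y^+}^2 \le (1+\frac{\epsilon}{\alpha})\norm{u}^2$; multiplying by $\alpha^2$ and expanding the left-hand side yields
\begin{equation*}
(\alpha-1)^2\norm{u}^2 + 2(\alpha-1)\ip{u}{w} + \norm{w}^2 \le (\alpha^2+\alpha\epsilon)\norm{u}^2.
\end{equation*}

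Next I would isolate $\norm{w}^2$ and use the identity $\alpha^2-(\alpha-1)^2 = 2\alpha-1$ to rewrite this as $\norm{w}^2 \le (2\alpha-1+\alpha\epsilon)\norm{u}^2 + 2(1-\alpha)\ip{u}{w}$. The target (iii), in which the final norm should carry a square, namely $\norm{w}^2 \le (1+\epsilon)\norm{u}^2 - \frac{1-\alpha}{\alpha}\norm{u-w}^2$, can be brought to precisely this same form by multiplying through by $\alpha$ and substituting $\norm{u-w}^2 = \norm{u}^2 - 2\ip{u}{w} + \norm{w}^2$. Since every step is reversible and $\alpha>0$, this establishes the equivalence. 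The concluding assertion is then read directly off (iii): because $\alpha\in(0,1)$ the coefficient $\frac{1-\alpha}{\alpha}$ is positive, so discarding the nonpositive term $-\frac{1-\alpha}{\alpha}\norm{u-w}^2$ leaves $\norm{w}^2 \le (1+\epsilon)\norm{u}^2$, which is exactly pointwise almost nonexpansiveness at $y$ on $U$ with violation at most $\epsilon$.

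There is no serious obstacle here; the content is entirely a reversible algebraic manipulation. The only point demanding care is the bookkeeping of the cross term $\ip{u}{w}$ and the matching of the constants via $\alpha^2-(\alpha-1)^2 = 2\alpha-1$; one should also note that statement (iii) as displayed is missing the square on its final norm, which the computation makes clear is required for the equivalence to hold.
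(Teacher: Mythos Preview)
Your argument is correct. The equivalence (i)$\Leftrightarrow$(ii) is indeed definitional, and your expansion for (ii)$\Leftrightarrow$(iii) checks out line by line; the identity $\alpha^2-(\alpha-1)^2=2\alpha-1$ is exactly what makes the constants match, and all steps are reversible since $\alpha>0$. Your observation that the displayed inequality in (iii) is missing a square on the final norm is also correct and worth flagging.

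As for comparison with the paper: the paper does not actually prove this proposition. Its entire proof reads ``The proof of this statement can be found in \cite[Proposition~2.1]{LukNguTam17}.'' So you have supplied a self-contained argument where the paper only gives a pointer to the literature. The computation you give is essentially the standard one (and is presumably what appears in the cited reference), so there is no genuine methodological divergence to discuss---you have simply filled in what the authors chose to outsource.
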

\begin{proof}
	The proof of this statement can be found in \cite[Proposition 2.1]{LukNguTam17}.
\end{proof}
In terms of the above \cref{d:eh}, {\em pointwise firmly nonexpansive} mappings 
are pointwise averaged mappings with averaging constant $\alpha=1/2$.  
\begin{propn}[compositions of averages of averaged operators]\label{prop:2.10}
	Let $ \mmap {T_j} \Euclid \Euclid $ for $j=1,2, \dots, m$ be pointwise almost averaged on $U_j$ at all 
	$y_j \in S_j \subset \Euclid$ with violation $\epsilon_j$ and averaging constant $\alpha_j\in (0,1)$ where
	$U_j \supset S_j$ for $j=1,2,\dots, m$.
	\begin{enumerate}[(i)]
		\item \label{prop:2.10 i}If $U \equiv U_1=U_2=\cdots =U_m$ and $S\equiv S_1 =S_2=\cdots =S_m$ then the weighted mapping 
			$T\equiv \sum_{j=1}^m w_jT_j$ with weights $w_j\in [0,1], \ \sum_{j=1}^mw_j=1$, is pointwise almost
			averaged at all $y \in S$ with violation $\epsilon=\sum_{j=1}^m w_j\epsilon_j$ and averaging
			constant $\alpha=\max_{j=1,2, \dots,m}\klam{\alpha_j}$ on $U$.
		\item \label{prop:2.10 ii}If $T_jU_j\subseteq U_{j-1}$ and $T_jS_j\subseteq S_{j-1}$ for $j=2,3, \dots,m$, then the composite
			 mapping $T\equiv T_1\circ T_2\circ\cdots\circ T_m$ is pointwise almost nonexpansive at all 
			 $y \in S_m$ on $U_m$ with violation at most
			 \begin{align}\label{eq:29}
				 	\epsilon=\prod_{j=1}^m\paren{1+\epsilon_j}-1.
			 \end{align}
		\item \label{prop:2.10 iii}If $T_jU_j\subseteq U_{j-1}$ and $T_jS_j\subseteq S_{j-1}$ for $j=2,3, \dots,m$, then the composite
			 mapping $T\equiv T_1\circ T_2\circ\cdots\circ T_m$ is pointwise almost averaged at all $y \in S_m$
			 on $U_m$ with violation at most $\epsilon$ given by (\ref{eq:29}) and averaging constant at least 
			 \begin{align}
			 	\alpha=\frac{m}{m-1+\frac{1}{\max_{j=1,2,\dots,m}\klam{\alpha_j}}}.
			 \end{align}
	\end{enumerate}
\end{propn}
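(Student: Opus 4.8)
The plan is to reduce all three parts to the quadratic characterization of almost averagedness supplied by \cref{prop: characterization of aa mappings}: namely, that $T$ is pointwise almost averaged at $y$ on $U$ with violation $\epsilon$ and averaging constant $\alpha$ precisely when $\norm{x^+-y^+}^2\leq (1+\epsilon)\norm{x-y}^2-\tfrac{1-\alpha}{\alpha}\norm{\paren{x-x^+}-\paren{y-y^+}}^2$ holds for every $x\in U$ and all $x^+\in Tx$, $y^+\in Ty$. Throughout it is convenient to abbreviate $c(\alpha)\equiv\tfrac{1-\alpha}{\alpha}=\tfrac1\alpha-1$, which is strictly decreasing on $(0,1)$; this monotonicity, so that $\alpha_j\leq\max_j\alpha_j$ forces $c(\alpha_j)\geq c(\max_j\alpha_j)$, is the engine behind every averaging-constant estimate below.

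For part (i) I would fix $x\in U$, $y\in S$ and write $x^+=\sum_j w_j x_j^+$ and $y^+=\sum_j w_j y_j^+$ with $x_j^+\in T_j x$, $y_j^+\in T_j y$. Applying convexity of $\norm{\cdot}^2$, i.e. $\norm{\sum_j w_j v_j}^2\leq \sum_j w_j\norm{v_j}^2$, first to $x^+-y^+=\sum_j w_j(x_j^+-y_j^+)$ and then to the displacement $\paren{x-x^+}-\paren{y-y^+}=\sum_j w_j\bigl[\paren{x-x_j^+}-\paren{y-y_j^+}\bigr]$, and inserting the quadratic characterization for each $T_j$, collapses everything into the single target inequality with violation $\sum_j w_j\epsilon_j$; the averaging term follows by replacing each $c(\alpha_j)$ with the smaller $c(\max_j\alpha_j)$ before pulling it out of the (convex) sum, which is legitimate precisely because $c$ is decreasing.

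Part (ii) is the easy one. Each $T_j$, being almost averaged, is almost nonexpansive with violation at most $\epsilon_j$ by the final assertion of \cref{prop: characterization of aa mappings}. The inclusions $T_jU_j\subseteq U_{j-1}$ and $T_jS_j\subseteq S_{j-1}$ guarantee that the intermediate base points land in $S_{j-1}$ and the corresponding test points in $U_{j-1}$, so that the single-layer estimate $\norm{\cdot}\leq\sqrt{1+\epsilon_j}\norm{\cdot}$ applies at the correct point of each factor; chaining these through the composition yields the factor $\prod_j\sqrt{1+\epsilon_j}$, i.e. the violation \eqref{eq:29}.

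The substance is in part (iii). Here I would introduce the layerwise iterates $z_m=x$, $z_{j-1}\in T_j z_j$, $z_0=x^+$ (and $w_m=y$, $w_{j-1}\in T_j w_j$, $w_0=y^+$), set $d_j\equiv z_j-w_j$ and $\Delta_j\equiv d_j-d_{j-1}$, and record the telescoping identity $\paren{x-x^+}-\paren{y-y^+}=d_m-d_0=\sum_{j=1}^m\Delta_j$. The quadratic characterization at layer $j$ reads $\norm{d_{j-1}}^2\leq(1+\epsilon_j)\norm{d_j}^2-c(\alpha_j)\norm{\Delta_j}^2$; back-substituting from $j=m$ down to $j=1$ produces $\norm{d_0}^2\leq\prod_j(1+\epsilon_j)\norm{d_m}^2-\sum_j\bigl[\prod_{i<j}(1+\epsilon_i)\bigr]c(\alpha_j)\norm{\Delta_j}^2$, which already supplies the violation \eqref{eq:29}. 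Since each product $\prod_{i<j}(1+\epsilon_i)\geq1$ and each $c(\alpha_j)\geq c(\max_j\alpha_j)$, the cross term dominates $c(\max_j\alpha_j)\sum_j\norm{\Delta_j}^2$, and the Cauchy--Schwarz bound $\norm{\sum_j\Delta_j}^2\leq m\sum_j\norm{\Delta_j}^2$ recombines the $m$ displacements into $\sum_j\norm{\Delta_j}^2\geq\tfrac1m\norm{\sum_j\Delta_j}^2$. The main obstacle is exactly this final bookkeeping: one must verify the arithmetic identity $\tfrac1m\,c(\max_j\alpha_j)=c(\alpha)$ for the stated $\alpha=\tfrac{m}{m-1+1/\max_j\alpha_j}$, so that the factor $\tfrac1m$ lost to Cauchy--Schwarz matches the $\tfrac1m$ concealed in the target averaging constant, while the violation products $\prod_{i<j}(1+\epsilon_i)$ must be discarded in the lower-bounding direction so as not to weaken the displacement penalty.
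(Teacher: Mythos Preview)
Your argument is correct in all three parts; the telescoping in part (iii) together with the Cauchy--Schwarz step and the identity $\tfrac{1}{m}c(\max_j\alpha_j)=c(\alpha)$ is exactly the right bookkeeping. The paper itself does not give a proof but simply cites \cite[Proposition~2.4]{LukNguTam17}, and your sketch reproduces the argument found there.
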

\begin{proof}
	The proof of this statement can be found in \cite[Proposition 2.4]{LukNguTam17}.
\end{proof}
\cref{defn: super-reg+} allows us to get pointwise almost nonexpansivity
of the projector and reflector on a neighborhood of a point in $\Omega$ relative to points not in $\Omega$.
This is of particular interest for us, since the fixed points of $\raar$ will be (depending on $\lambda<1$)
in neither of the sets $A$ and $B$ if the problem is inconsistent (see \cref{thm:fixed points}, where we do not require that $A\cap B\neq \emptyset$).
\begin{propn}[regularity of projectors and reflectors at a distance]\label{prop: projector and reflector on sup reg set}
	Let $\Omega\subset \Euclid$ be nonempty and closed, and let $U$ be a neighborhood of $\xbar \in \Omega$.
	Let $\Lambda\equiv P_\Omega^{-1}(\xbar)\cap U$.
	If $\Omega$ is $\epsilon$-super-regular at a distance at $\xbar$ relative to $\Lambda$ 
	with constant $\epsilon$ on the neighborhood $U$, then the following hold.
	\begin{enumerate}[(i)]
		\item \label{P is ane rel}If $\epsilon\in [0,1)$, then the projector $P_\Omega$ is pointwise almost nonexpansive 
			at each $y'\in \Lambda$ with violation $\epsilontilde$ on $U$ for 
			$\epsilontilde\equiv 4\epsilon/\paren{1-\epsilon}^2$. That is, at each 
			$y'\in \Lambda$
			\begin{align*}
				\norm{x-y}\leq \sqrt{1+\epsilontilde}\norm{x'-y'}
				=\frac{1+\epsilon}{1-\epsilon}\norm{x'-y'} \quad 
				\paren{\forall x'\in U}\paren{\forall x\in P_\Omega(x')}\paren{\forall y\in P_\Omega(y')}.
			\end{align*}
		\item \label{P is afne rel}If $\epsilon\in [0,1)$, then the projector $P_\Omega$ is pointwise almost firmly 
			nonexpansive at each $y'\in \Lambda$ with violation $\epsilontilde_2$ on 
			$U$ for 
			$\epsilontilde_2\equiv 4\epsilon(1+\epsilon)/\paren{1-\epsilon}^2$. That is,
			at each $y'\in \Lambda$
			\begin{align*}
				\norm{x-y}^2+\norm{\paren{x'-x}-\paren{y'-y}}^2\leq \paren{1+\epsilontilde_2}\norm{x'-y'} 
				\quad\paren{\forall x'\in U}\paren{\forall x\in P_\Omega(x')}\paren{\forall y\in P_\Omega(y')}. 
			\end{align*}
		\item \label{R is ane rel}The reflector $R_\Omega$ is pointwise almost nonexpansive at each $y'\in \Lambda$
			with violation $\epsilontilde_3\equiv 8\epsilon(1+\epsilon)/(1-\epsilon)^2$ 	
			on $U$. That is, for all $y'\in \Lambda$
			\begin{align*}
				\norm{x-y}\leq \sqrt{1+\epsilontilde_3}\norm{x'-y'} \quad 
				\paren{\forall x'\in U}\paren{\forall x\in R_\Omega(x')}\paren{\forall y\in R_\Omega(y')}.
			\end{align*}
	\end{enumerate}
\end{propn}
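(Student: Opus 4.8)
The plan is to obtain all three assertions as algebraic consequences of the characterization of super-regularity at a distance in \cref{prop: charac super reg sets relative}. The first thing I would pin down is the quantifier bookkeeping that makes that characterization applicable here. Fix $y'\in \Lambda$ with $y\in P_\Omega(y')$, and take any $x'\in U$ with $x\in P_\Omega(x')$. Setting $v\equiv x'-x$, we have $x'\in P_\Omega^{-1}(x)$, hence $v=x'-x\in \cone\paren{P_\Omega^{-1}x-x}=\pncone{\Omega}(x)$, while $x+v=x'\in U$ and $x\in P_\Omega(x+v)$; thus $(x,v)\in V_\epsilon$ for $V_\epsilon$ as in \eqref{eq:def V epsilon}. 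Since also $y'\in\Lambda\cap U$ and $y\in P_\Omega(y')$, both parts of \cref{prop: charac super reg sets relative} are available for exactly these points. Throughout I abbreviate $a\equiv x'-y'$, $b\equiv x-y$, and $c\equiv\paren{x'-x}-\paren{y'-y}$, and record the identity $c=a-b$.

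For part~(i), I would invoke part (ii) of \cref{prop: charac super reg sets relative}, which reads $\norm{b}\leq \epsilon\norm{\paren{y'-y}-\paren{x'-x}}+\norm{a}=\epsilon\norm{b-a}+\norm{a}$. Bounding $\norm{b-a}\leq\norm{b}+\norm{a}$ by the triangle inequality and collecting terms gives $\paren{1-\epsilon}\norm{b}\leq\paren{1+\epsilon}\norm{a}$, so $\norm{b}\leq\tfrac{1+\epsilon}{1-\epsilon}\norm{a}$ because $\epsilon<1$. A one-line computation confirms $\paren{\tfrac{1+\epsilon}{1-\epsilon}}^2=1+\epsilontilde$, which is the claimed estimate.

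For part~(ii), I would begin from the sharper part (i) of \cref{prop: charac super reg sets relative}, namely $\norm{b}^2\leq\epsilon\norm{c}\norm{b}+\ip{a}{b}$. Substituting $a=b+c$ and cancelling $\norm{b}^2$ leaves the cross-term estimate $-\ip{c}{b}\leq\epsilon\norm{c}\norm{b}$. Using $\norm{b}^2+\norm{c}^2=\norm{a}^2-2\ip{b}{c}$, this yields $\norm{b}^2+\norm{c}^2\leq\norm{a}^2+2\epsilon\norm{b}\norm{c}$. I would then feed in part~(i) together with $\norm{c}\leq\norm{a}+\norm{b}$ to get $\norm{b}\norm{c}\leq\tfrac{2(1+\epsilon)}{(1-\epsilon)^2}\norm{a}^2$; substituting produces the violation $\epsilontilde_2=4\epsilon(1+\epsilon)/(1-\epsilon)^2$ (reading the right-hand side of the stated inequality as $\norm{x'-y'}^2$).

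Finally, part~(iii) falls out of the work above: since $R_\Omega(x')=2x-x'$ and $R_\Omega(y')=2y-y'$, the reflected difference is $\norm{2b-a}=\norm{b-c}$, and $\norm{b-c}^2=\paren{\norm{b}^2+\norm{c}^2}-2\ip{b}{c}$. Inserting the two estimates from part~(ii) gives $\norm{b-c}^2\leq(1+2\epsilontilde_2)\norm{a}^2=(1+\epsilontilde_3)\norm{a}^2$, since $\epsilontilde_3=2\epsilontilde_2$. I expect the only genuine obstacle to be the membership check in the first paragraph, i.e.\ verifying that every projection pair $(x,x'-x)$ arising from a free point $x'\in U$ really lies in $V_\epsilon$, so that the characterization applies with the \emph{asymmetric} roles of $y'\in\Lambda$ and $x'\in U$; once that is secured, the rest is routine manipulation with the triangle and Cauchy--Schwarz inequalities and the identities relating the violation constants.
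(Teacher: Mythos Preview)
Your proposal is correct and essentially mirrors the paper's proof: the same membership check $(x,x'-x)\in V_\epsilon$, the same use of \cref{prop: charac super reg sets relative}\eqref{prop: charac super reg sets relative part ii} with the triangle inequality for part~(i), and the same route through \cref{prop: charac super reg sets relative}\eqref{prop: charac super reg sets relative part i} combined with part~(i) for part~(ii). The only cosmetic difference is in part~(iii), where the paper invokes \cref{prop: characterization of aa mappings}\eqref{eq:aa ii} with $\alpha=1/2$ to pass from almost firm nonexpansiveness of $P_\Omega$ to almost nonexpansiveness of $R_\Omega=2P_\Omega-\Id$ with doubled violation, whereas you unpack that invocation into the explicit identity $\norm{b-c}^2=(\norm{b}^2+\norm{c}^2)-2\ip{b}{c}$; the two are the same computation.
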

\begin{proof}
    Our proof follows that of \cite[Theorem 3.1]{LukNguTam17}.  
	Before proving each of the statements individually, note the following.
	Take any $x'\in U$. Then for each $x\in P_\Omega(x')$ we have
	$\paren{x,x'-x}\in \gph \pncone \Omega \subset \gph \ncone \Omega$. Moreover,
	 by construction $\paren{x, x'-x}\in V_\epsilon$ where $V_\epsilon$ is defined by \eqref{eq:def V epsilon}.

	\eqref{P is ane rel}. Choosing $x'\in U$ and $x\in P_\Omega(x')$ we get 
	$\paren{x,x'-x}\in\gph \pncone \Omega\subset \gph \ncone \Omega$. Applying 
	\cref{prop: charac super reg sets relative}\eqref{prop: charac super reg sets relative part ii}
	yields
	\begin{align*}
		\norm{y-x}\leq \epsilon\norm{\paren{x'-x}-\paren{y'-y}}+\norm{y'-x'}
	\end{align*}
	whenever $y'\in U\cap\Lambda$ and $y\in P_\Omega(y')$. Exploiting the triangle inequality
	we deduce
	\begin{align*}
		\norm{y-x}\leq \epsilon\paren{\norm{y'-x'}+\norm{y-x}}+\norm{y'-x'}
	\end{align*}
	and thus conclude the claimed result.
	
	\eqref{P is afne rel}. By super-regularity at a distance relative to $\Lambda$ of $\Omega$ and 
	\cref{prop: charac super reg sets relative}\eqref{prop: charac super reg sets relative part i}
	we have
	\begin{align*}
		&\norm{x-y}^2+\norm{\paren{x'-x}-\paren{y'-y}}^2\\
		=&2\norm{x-y}^2+\norm{x'-y'}^2-2\ip{x'-y'}{x-y}\\
		\leq &\norm{x'-y'}^2+2\epsilon\norm{\paren{x'-x}-\paren{y'-y}}\norm{x-y},
	\end{align*}
	for $\paren{x, x'-x}\in V_\epsilon$ and $y'\in U\cap \Lambda$, $y \in P_\Omega(y')$. Together
	with the triangle inequality this implies
	\begin{align*}
		&\norm{x-y}^2+\norm{\paren{x'-x}-\paren{y'-y}}^2\\
		\leq &\norm{x'-y'}^2+2\epsilon\paren{\norm{x'-y'}+\norm{x-y}}\norm{x-y}.
	\end{align*}
	Using part \eqref{P is ane rel} we deduce
	\begin{align}\label{showing P fne}
		&\norm{x-y}^2+\norm{\paren{x'-x}-\paren{y'-y}}^2\nonumber\\
		\leq &\paren{1+4\epsilon\frac{1+\epsilon}{\paren{1-\epsilon}^2}}\norm{x'-y'}^2
	\end{align}
	for all $\paren{x, x'-x}\in V_\epsilon$ and for all $y \in P_\Omega(y')$ at each
	$y'\in U\cap \Lambda$. Since, as mentioned in the beginning, for all
	$x'\in U$ it holds that $\paren{x, x'-x}\in V_\epsilon$ for all $x\in P_\Omega(x')$,
	\eqref{showing P fne} holds at each $y'\in \Lambda=\Lambda\cap U$ for
	all $x\in P_\Omega(x')$ whenever $x'\in U$. By 
	\cref{prop: characterization of aa mappings} with $\alpha=1/2$ we conclude
	that $P_\Omega$ is pointwise almost firmly nonexpansive at each $y'\in \Lambda$ with violation
	$4\epsilon\paren{1+\epsilon}/\paren{1-\epsilon}^2$ on $U$.

	\eqref{R is ane rel}. By \eqref{P is afne rel} $P_\Omega$ is pointwise almost 
	firmly nonexpansive	at each $y'\in \Lambda$ with violation 
	\[4\epsilon\paren{1+\epsilon}/\paren{1-\epsilon}^2\] on $U$. Thus, by 
	\cref{prop: characterization of aa mappings} the reflector, $R_\Omega\equiv 2P_\Omega-\Id$,
	is pointwise almost nonexpansive at each $y'\in \Lambda$ with violation
	$8\epsilon\paren{1+\epsilon}/\paren{1-\epsilon}^2$ on $U$.
\end{proof}	

\subsection{\texorpdfstring{$\raar$}{T {DRlambda}} is Almost Averaged at \texorpdfstring{$\Fix \raar$}{Fix T {DRlambda}}}\label{s:raar near fix pts}
For general multivalued mappings $\mmap{T}{\Euclid}{\Euclid}$ the set of fixed points is defined as 
\begin{equation}\label{d:Fix T}
   	\Fix T\equiv \set{x\in\Euclid}{x\in T(x)}.
\end{equation}
Note that, by this definition, the set $T(x)$ need not consist entirely of fixed points (see \cite[Example 2.1]{LukNguTam17}).
If $T$ is pointwise almost averaged, however, the mapping 
$T$ is single-valued on its fixed point set.
\begin{propn}\cite[Proposition 2.2 ]{LukNguTam17}\label{t:single-valued paa}
    If $\mmap{T}{\Euclid}{\Euclid}$ is pointwise almost nonexpansive on $D\subseteq\Euclid$ at $\xbar\in D$ with 
    violation $\varepsilon\geq 0$, then $T$ is single-valued at $\xbar$. In particular, if $\xbar\in\Fix T$ 
	then $T(\xbar)=\{\xbar\}$.
\end{propn}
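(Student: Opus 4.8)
The plan is to extract single-valuedness directly from the defining inequality of pointwise almost nonexpansiveness, by evaluating it on the diagonal, that is, by letting the comparison point coincide with $\xbar$ itself.

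First I would unpack the hypothesis. Pointwise almost nonexpansiveness of $T$ on $D$ at $\xbar$ furnishes a constant $\varepsilon\in[0,1)$ such that $\norm{x^+-\xbar^+}\leq\sqrt{1+\varepsilon}\,\norm{x-\xbar}$ holds for every $x\in D$, every $x^+\in T(x)$, and every $\xbar^+\in T(\xbar)$; this is exactly \eqref{eq:18} with the distinguished point $y$ taken to be $\xbar$. The key step is then to specialize $x=\xbar$, which is admissible precisely because $\xbar\in D$. The right-hand side collapses to $\sqrt{1+\varepsilon}\,\norm{\xbar-\xbar}=0$, so for any two selections $x^+,\xbar^+\in T(\xbar)$ we are forced to have $\norm{x^+-\xbar^+}=0$, i.e.\ $x^+=\xbar^+$. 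Since these selections were arbitrary, $T(\xbar)$ contains at most one point, which is precisely single-valuedness of $T$ at $\xbar$.

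For the ``in particular'' assertion I would simply combine this with the fixed-point hypothesis: if $\xbar\in\Fix T$ then $\xbar\in T(\xbar)$ by \eqref{d:Fix T}, and since $T(\xbar)$ is now known to be a singleton containing $\xbar$, it must equal $\klam{\xbar}$.

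I do not expect a genuine obstacle here; the statement is a one-line consequence of the definition. The only point worth flagging is that the violation constant $\varepsilon$ plays no role on the diagonal, since it multiplies the vanishing distance $\norm{\xbar-\xbar}$. Consequently single-valuedness at $\xbar$ persists no matter how large the violation is within $[0,1)$, and indeed the argument would go through verbatim with any finite multiplicative constant in place of $\sqrt{1+\varepsilon}$.
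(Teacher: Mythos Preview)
Your argument is correct and is the standard one-line proof. The paper does not actually supply a proof of this proposition; it simply cites \cite[Proposition 2.2]{LukNguTam17} and moves on, so there is nothing to compare against beyond noting that your diagonal evaluation of \eqref{eq:18} is exactly how the result is established in the cited reference.
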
     
The mapping $\raar$ is a composition and convex combination of projectors and reflectors. The almost averaging 
property is preserved under compositions and convex combinations of pointwise almost averaged
mappings, as we have seen in \cref{prop:2.10}.  
\begin{lem}\label{th:single-valued projector}
	Let $\xbar\in \Euclid$ and let $\Omega\subset\Euclid$ be super-regular at a distance relative to 
   	$\Lambda\subset P_\Omega^{-1}(\bar{\omega})$ at
	$\bar{\omega}$ where $\omegabar\in P_\Omega(\xbar)$ and $\xbar\in\Lambda$.   
    In addition, for each $\epsilon>0$, let $\xbar\in U_\epsilon(\omegabar)$ where $U_\epsilon(\omegabar)$ is a 
    neighborhood of $\omegabar$ on which \eqref{eq: epsilon subregularity} holds. Then $P_\Omega(\xbar)=\{\omegabar\}$. 
\end{lem}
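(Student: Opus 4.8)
The plan is to show that \emph{every} element of $P_\Omega(\xbar)$ must coincide with $\omegabar$, using the $\epsilon$-subregularity inequality \eqref{eq: epsilon subregularity} and exploiting that super-regularity at a distance makes it available for arbitrarily small $\epsilon$. The whole argument is a single well-chosen substitution, so the only care needed is in the bookkeeping of the admissibility conditions.

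First I would fix an $\epsilon\in(0,1)$ and the associated neighborhood $U_\epsilon(\omegabar)$ on which $\Omega$ is $\epsilon$-subregular relative to $\Lambda$ at $\omegabar$ for every $(x,v)\in V_\epsilon$, with $V_\epsilon$ as in \eqref{eq:def V epsilon}. I would then check that the pair $(\omegabar,\,\xbar-\omegabar)$ lies in $V_\epsilon$: since $\omegabar\in P_\Omega(\xbar)$ we have $\xbar\in P_\Omega^{-1}(\omegabar)$, hence $\xbar-\omegabar\in\pncone{\Omega}(\omegabar)$ by the definition of the proximal normal cone; moreover $\omegabar+(\xbar-\omegabar)=\xbar\in U_\epsilon(\omegabar)$ and $\omegabar\in P_\Omega(\xbar)$, which are precisely the two requirements in \eqref{eq:def V epsilon}.

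Next I would take an arbitrary $\omega'\in P_\Omega(\xbar)$ and invoke \eqref{eq: epsilon subregularity} with $x=\omegabar$, $v=\xbar-\omegabar$, test point $y'=\xbar$, and $y=\omega'\in P_\Omega(y')=P_\Omega(\xbar)$. The choice $y'=\xbar$ is legitimate because $\xbar\in\Lambda$ and $\xbar\in U_\epsilon(\omegabar)$ by hypothesis, so $\xbar\in\Lambda\cap U_\epsilon(\omegabar)$; this is exactly where the two standing assumptions $\xbar\in\Lambda$ and $\xbar\in U_\epsilon(\omegabar)$ are consumed. With these substitutions both vectors in \eqref{eq: epsilon subregularity} collapse to $\omega'-\omegabar$, since $v-(y'-y)=(\xbar-\omegabar)-(\xbar-\omega')=\omega'-\omegabar$ and $y-x=\omega'-\omegabar$, so the inequality reduces to $\norm{\omega'-\omegabar}^2\leq\epsilon\,\norm{\omega'-\omegabar}^2$.

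Finally, if $\omega'\neq\omegabar$ I would divide by $\norm{\omega'-\omegabar}^2>0$ to obtain $1\leq\epsilon$, contradicting $\epsilon<1$. Hence $\omega'=\omegabar$, and since $\omega'$ was an arbitrary element of $P_\Omega(\xbar)$ this yields $P_\Omega(\xbar)=\{\omegabar\}$. There is no genuine obstacle in the estimate itself; the only delicate point is verifying the two memberships $(\omegabar,\xbar-\omegabar)\in V_\epsilon$ and $\xbar\in\Lambda\cap U_\epsilon(\omegabar)$ that make the single application of \eqref{eq: epsilon subregularity} legal.
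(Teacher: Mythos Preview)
Your proof is correct. By choosing $(x,v)=(\omegabar,\xbar-\omegabar)\in V_\epsilon$ and $y'=\xbar\in\Lambda\cap U_\epsilon(\omegabar)$, the subregularity inequality \eqref{eq: epsilon subregularity} collapses exactly as you say to $\norm{\omega'-\omegabar}^2\le\epsilon\norm{\omega'-\omegabar}^2$, forcing $\omega'=\omegabar$ for any $\epsilon<1$.

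The paper takes a different, more modular route: it invokes \cref{prop: projector and reflector on sup reg set}\eqref{P is ane rel} to conclude that $P_\Omega$ is pointwise almost nonexpansive at $\xbar$ on $U_\epsilon(\omegabar)$, and then appeals to \cref{t:single-valued paa}, which says any pointwise almost nonexpansive mapping is single-valued at the reference point. Your argument is in effect the specialization of those two propositions to the single case $x'=y'=\xbar$, where the triangle-inequality step in \cref{prop: projector and reflector on sup reg set}\eqref{P is ane rel} becomes superfluous and the constants collapse cleanly. The paper's approach reuses machinery already in place; yours is self-contained and makes transparent that only a single $\epsilon<1$ is needed and exactly where each hypothesis ($\xbar\in\Lambda$, $\xbar\in U_\epsilon(\omegabar)$, $\omegabar\in P_\Omega(\xbar)$) enters.
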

\begin{proof}
   	For some fixed $\epsilon>0$, we get by the assumptions on super-regularity at a distance of $\Omega$ relative to $\Lambda$ and 
	\cref{prop: projector and reflector on sup reg set}\eqref{P is ane rel} 
	that there exists some neighborhood $U_{\epsilon}(\bar{\omega})$ such that
	$P_\Omega$ is pointwise almost nonexpansive at $\xbar\in \Lambda\cap U_\epsilon(\bar{\omega})$ on 
        $U_{\epsilon}(\bar{\omega})$ with violation
	$\epsilontilde=4\epsilon/(1-\epsilon)^2$. 
	This implies single-valuedness of $P_\Omega$ at $\xbar$ by \cref{t:single-valued paa},
	i.e. that $\{\bar{\omega}\}=P_\Omega(\xbar)$, as claimed.	
\end{proof}
\begin{thm}[$\raar$ is pointwise almost firmly nonexpansive at its fixed points]\label{thm: singlevalued at fixed points}
	Let $A$, $B$ be closed and nonempty, $\lambda\in (0,1)$ and $\xbar \in \Fix \raar\neq\emptyset$.
	Let $\bbar\in P_B(\xbar)$ and $\abar\in P_A(2\bbar-\xbar)$. Suppose that $B$ is super-regular at a distance 
    relative to $\Lambda_{\bar{b}}\equiv P_B^{-1}(\bar{b})$ at
	$\bar{b}$ and, likewise,  $A$ is super-regular at a distance relative to 
    $\Lambda_\abar\equiv P_A^{-1}(\bar{a}) $ at $\bar{a}$.
    Suppose, moreover, that  the following hold.
    \begin{enumerate}[(i)]
	\item\label{ass B} For each $\epsilon>0$, $\xbar\in U_\epsilon(\bbar)$ where $U_\epsilon(\bbar)$ is a 
        neighborhood of $\bbar$ on which \eqref{eq: epsilon subregularity} holds for $\epsilon$.
	\item\label{ass A} For each $\epsilon>0$, $2\bbar-\xbar\in U_\epsilon(\abar)$ where $U_\epsilon(\abar)$ is a 
        neighborhood of $\abar$ on which \eqref{eq: epsilon subregularity} holds for $\epsilon$. 
    	\item \label{ass 1} $R_B(\Lambda_{\bar{b}})\subset \Lambda_{\bar{a}}$.
    	\item \label{ass 2}$R_B(U_\epsilon(\bbar))\subset U_\epsilon(\abar)$ for all $\epsilon>0$.
    \end{enumerate} 
     Then, $\{\bar{b}\}=P_B(\xbar)$,  $\{\bar{a}\}=P_A(R_B(\xbar))$, $\raar$ is single-valued at $\xbar$, 
     and  for all $\epsilon>0$ there exists a neighborhood $U(B,\epsilon,\xbar)$ of $\bar{b}$ such that
     $\raar$ is pointwise almost firmly nonexpansive at $\xbar$ with violation at most $\epsilon$ on $U(B,\epsilon,\xbar)$.  
\end{thm}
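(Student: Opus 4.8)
The plan is to reduce the claim to the calculus already assembled in \cref{prop: projector and reflector on sup reg set} and \cref{prop:2.10}: pin down single-valuedness of the two projections with \cref{th:single-valued projector}, write $\raar$ in reflection form as a convex combination of the Douglas--Rachford operator $\DR$ and $P_B$, and then certify each summand as pointwise almost averaged with averaging constant $\tfrac12$ and vanishing violation. First I would settle the single-valuedness assertions. Applying \cref{th:single-valued projector} with $\Omega=B$, $\bar\omega=\bbar$, base point $\xbar$ and $\Lambda=\Lambda_{\bbar}$ is legitimate: $\bbar\in P_B(\xbar)$ gives $\xbar\in P_B^{-1}(\bbar)=\Lambda_{\bbar}$, $B$ is super-regular at a distance relative to $\Lambda_{\bbar}$ at $\bbar$ by hypothesis, and assumption \eqref{ass B} puts $\xbar$ in $U_\eta(\bbar)$ for every $\eta>0$. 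Hence $P_B(\xbar)=\klam{\bbar}$, so $R_B(\xbar)=\klam{2\bbar-\xbar}$. The same lemma with $\Omega=A$, $\bar\omega=\abar$, base point $2\bbar-\xbar=R_B(\xbar)$ and $\Lambda=\Lambda_{\abar}$ --- using $\abar\in P_A(2\bbar-\xbar)$ and assumption \eqref{ass A} --- gives $P_A(R_B(\xbar))=\klam{\abar}$. With both projections single-valued, \eqref{raar} collapses to $\raar(\xbar)=\klam{\lambda\abar+(1-2\lambda)\bbar+\lambda\xbar}$, so $\raar$ is single-valued at $\xbar$.

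For the averaging estimate I would substitute $2b-x=R_B(x)$ into \eqref{raar} to obtain the reflection form
\[
\raar=\lambda\DR+(1-\lambda)P_B,\qquad \DR=\tfrac12\paren{\Id+R_AR_B},
\]
the coupling of the $P_B$-selection between the two summands being harmless since the almost-averaged estimates of \cref{prop: characterization of aa mappings} hold for every selection. Now fix a target violation $\epsilon>0$ and a super-regularity parameter $\eta\in(0,1)$. By \cref{prop: projector and reflector on sup reg set}\eqref{R is ane rel}, $R_B$ is pointwise almost nonexpansive at each point of $\Lambda_{\bbar}$ --- in particular at $\xbar$ --- with violation $\epsilontilde_3^B$ on $U_\eta(\bbar)$, and $R_A$ is pointwise almost nonexpansive at each point of $\Lambda_{\abar}$ --- in particular at $R_B(\xbar)=2\bbar-\xbar\in\Lambda_{\abar}$ --- with violation $\epsilontilde_3^A$ on $U_\eta(\abar)$. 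To compose I invoke \cref{prop:2.10}\eqref{prop:2.10 ii} with inner map $R_B$ and outer map $R_A$, taking $S=\Lambda_{\bbar}\mapsto\Lambda_{\abar}$ and $U_\eta(\bbar)\mapsto U_\eta(\abar)$; the required inclusions $R_B(\Lambda_{\bbar})\subset\Lambda_{\abar}$ and $R_B(U_\eta(\bbar))\subset U_\eta(\abar)$ are precisely assumptions \eqref{ass 1} and \eqref{ass 2}. This yields $R_AR_B$ pointwise almost nonexpansive at $\xbar$ with violation $\paren{1+\epsilontilde_3^A}\paren{1+\epsilontilde_3^B}-1$, and then \cref{prop: characterization of aa mappings} (with $\tilde T=2\DR-\Id=R_AR_B$) makes $\DR$ pointwise almost averaged at $\xbar$ with constant $\tfrac12$ and half that violation.

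Finally, \cref{prop: projector and reflector on sup reg set}\eqref{P is afne rel} gives $P_B$ pointwise almost firmly nonexpansive --- averaged with constant $\tfrac12$ --- at $\xbar$ with violation $\epsilontilde_2^B$ on $U_\eta(\bbar)$. Since $\DR$ and $P_B$ are now both pointwise almost averaged at $\xbar$ on the common neighborhood $U_\eta(\bbar)$ with the common constant $\tfrac12$, \cref{prop:2.10}\eqref{prop:2.10 i} applies to $\raar=\lambda\DR+(1-\lambda)P_B$ and returns averaging constant $\max\klam{\tfrac12,\tfrac12}=\tfrac12$, i.e. almost firm nonexpansiveness, with violation
\[
\tfrac{\lambda}{2}\paren{\paren{1+\epsilontilde_3^A}\paren{1+\epsilontilde_3^B}-1}+(1-\lambda)\epsilontilde_2^B.
\]
Because $\epsilontilde_2^B,\epsilontilde_3^A,\epsilontilde_3^B$ are the explicit functions of $\eta$ recorded in \cref{prop: projector and reflector on sup reg set} that tend to $0$ as $\eta\downarrow0$, and because $A,B$ are super-regular at a distance (so $\eta$ is free), I can shrink $\eta$ until this violation is at most $\epsilon$ and set $U(B,\epsilon,\xbar):=U_\eta(\bbar)$.

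The step I expect to be the real obstacle is the bookkeeping of the relative-to-$\Lambda$ structure across the composition $R_AR_B$. In the classical consistent analysis every operator is controlled on one neighborhood of one common point; here $\abar,\bbar,\xbar$ are three distinct points lying in none of the sets simultaneously, $R_A$ is only tame relative to $\Lambda_{\abar}$ near $\abar$, and $R_B$ only relative to $\Lambda_{\bbar}$ near $\bbar$. Assumptions \eqref{ass 1}--\eqref{ass 2} are exactly what forces the output set and neighborhood of $R_B$ into the admissible input set and neighborhood of $R_A$, and checking that the base points align --- $\xbar\in\Lambda_{\bbar}$ and $R_B(\xbar)=2\bbar-\xbar\in\Lambda_{\abar}$ --- is the crux. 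Once this alignment is in place, the averaging calculus of \cref{prop:2.10} runs mechanically.
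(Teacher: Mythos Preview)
Your proposal is correct and follows essentially the same route as the paper: single-valuedness via \cref{th:single-valued projector}, the decomposition $\raar=\lambda\DR+(1-\lambda)P_B$, almost nonexpansiveness of $R_A$ and $R_B$ from \cref{prop: projector and reflector on sup reg set}\eqref{R is ane rel}, composition via \cref{prop:2.10}\eqref{prop:2.10 ii} using assumptions \eqref{ass 1}--\eqref{ass 2}, then the convex combination via \cref{prop:2.10}\eqref{prop:2.10 i} with $P_B$ handled by \cref{prop: projector and reflector on sup reg set}\eqref{P is afne rel}, and finally shrinking the super-regularity parameter to hit the target violation. The only cosmetic difference is that the paper carries separate parameters $\epsilon_A,\epsilon_B$ for the two sets while you use a single $\eta$, which is immaterial.
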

Before we begin the proof of this statement, we would like to point out an important feature of our construction.  
The claimed pointwise almost averagedness of $\raar$ at $\xbar$ holds on open subsets containing 
both $\xbar$ and $\bbar=P_B(\xbar)$.  This follows from assumption \eqref{ass B}.  The conclusion of the 
theorem could have been equivalently stated:  {\em for all $\epsilon>0$ 
there exists a neighborhood $U(B, \epsilon,\xbar)$ of $\xbar$ 
such that $\raar$ is pointwise almost averaged at $\xbar$ with violation at most $\epsilon$ on $U$}.  
We have presented the statement with neighborhood $U(B, \epsilon,\xbar)$ containing $\bbar$ to emphasize the fact that 
the open sets on which the regularity of $\raar$ holds is constructed relative to points $\bbar$ {\em at a distance}
from the point of interest $\xbar\in \Fix \raar$.  The usual use of balls for neighborhoods is not the most convenient
or appropriate for this setting. 
\begin{proof}[Proof of \cref{thm: singlevalued at fixed points}]
    Under assumptions \eqref{ass B} and \eqref{ass A}, \cref{th:single-valued projector} yields  $\{\bar{b}\}=P_B(\xbar)$ and 
    $\{\bar{a}\}=P_A(R_B(\xbar))$, as claimed.  From this one can immediately conclude that $\raar$ is single-valued at $\xbar$.

	For any fixed $\epsilon_B>0$, we get by the assumptions on super-regularity at a distance of $B$ relative to $\Lambda_b$ and 
	\cref{prop: projector and reflector on sup reg set}\eqref{P is afne rel} 
	that there exists some neighborhood $U_{\epsilon_B}(\bbar)$ such that
	$P_B$ is pointwise almost firmly nonexpansive at $\xbar\in \Lambda_{\bar{b}}\cap U(B, \epsilon_B,\xbar)$ 
	on $U_{\epsilon_B}(\bbar)$ with violation $\epsilon_{P_B}=4\epsilon_B(1+\epsilon_B)/(1-\epsilon_B)^2$. 
	Note that this also shows that $P_B$ is pointwise almost nonexpansive at $\xbar$ on $U_{\epsilon_B}(\bbar)$.
	Similarly, 	by \cref{prop: projector and reflector on sup reg set}\eqref{R is ane rel},
    $R_B$ is pointwise almost nonexpansive at $\xbar$ with violation
	$\epsilon_{R_B}=8\epsilon_B(1+\epsilon_B)/(1-\epsilon_B)^2$ on $U_{\epsilon_B}(\bbar)$. Likewise, for any $\epsilon_A>0$ 
	there exists a neighborhood $U_{\epsilon_A}(\abar)$ of $\abar$ such that
    $R_A$ is pointwise almost nonexpansive at $\abar=2\bbar-\xbar$ with violation 
    $\epsilon_{R_A}=8\epsilon_A(1+\epsilon_A)/(1-\epsilon_A)^2$ on $U_{\epsilon_A}(\abar)$. 

	By \eqref{ass 1} and \eqref{ass 2}, the assumptions of \cref{prop:2.10}\eqref{prop:2.10 ii} are satisfied, hence
	we deduce that, for any fixed $\epsilon_A>0$ there exists a neighborhood $U(A, \epsilon_{R_AR_B},\xbar)$ such that $R_AR_B$ is pointwise 
	almost nonexpansive at $\xbar$ with violation at most
	$\epsilon_{R_AR_B}=\epsilon_{R_A}+\epsilon_{R_B}+\epsilon_{R_A}\epsilon_{R_B}$ on $U(B, \epsilon,\xbar)$.
	
	By \cref{prop: characterization of aa mappings}\eqref{eq:aa ii} we get that $1/2(R_AR_B+\Id)$ is almost firmly nonexpansive at $\xbar$ with violation $\epsilon_{R_AR_B}/2$ on
	$U_{\epsilon_B}(\bbar)$.
	Applying \cref{prop:2.10}\eqref{prop:2.10 i} yields pointwise almost firm nonexpansivity of
	$\raar$ at $\xbar$ on $U_{\epsilon_B}(\bbar)$ with violation at most 
	\begin{align*}
		\epsilon'=\lambda\epsilon_{R_AR_B}/2+(1-\lambda)\epsilon_{P_B}.
	\end{align*}
	
	Since the above properties hold for each $\epsilon_B>0$ and $\epsilon_A>0$, then given any $\epsilon>0$
    we can construct the neighborhoods above so that $\epsilon'\leq \epsilon$.  We  conclude that 
    for any $\epsilon>0$ there is a neighborhood $U(B,\epsilon,\xbar)$ such that $\raar$ is pointwise almost nonexpansive 
	at $\xbar$ on $U(B,\epsilon,\xbar)$ with violation at most $\epsilon$ (the corresponding 
	neighborhood $U(A, \epsilon_{R_AR_B},\xbar)$ of $\abar$ will be denoted by $U(A,\epsilon,\xbar)$). 
	This conclusion is consistent with the fact established above
    that $\raar$ is single-valued, which completes the proof. 
\end{proof}
\begin{cor}\label{th:JEFF the Brotherhood}
In the setting of \cref{thm: singlevalued at fixed points},  fix $\epsilonbar>0$ and  let
$U(B, \epsilonbar, \xbar)$ and $U(A, \epsilonbar, \xbar)$ be neighborhoods that
satisfy the assumptions \eqref{ass B}, \eqref{ass A} and \eqref{ass 2} such that $\raar$
is pointwise almost firmly nonexpansive at $\xbar$ with violation $\epsilonbar$ on $U(B, \epsilonbar, \xbar)$. 
Then, for all $\epsilon<\epsilonbar$ there 
exists a neighborhood $U(B, \epsilon, \xbar)$ and a neighborhood $U(A, \epsilon, \xbar)$ such that 
conditions \eqref{ass B}, \eqref{ass A} and \eqref{ass 2} hold 
in  addition to the inclusions $U(A, \epsilon, \xbar)\subset U(A, \epsilonbar, \xbar)$ and 
$U(B, \epsilon, \xbar)\subset U(B, \epsilonbar, \xbar)$. 
\end{cor}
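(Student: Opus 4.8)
The plan is to exploit the fact that assumptions \eqref{ass B}, \eqref{ass A} and \eqref{ass 2} of \cref{thm: singlevalued at fixed points} are postulated for \emph{every} $\epsilon>0$, and to enforce the desired nestedness afterward by intersecting the freshly produced neighborhoods with the given $\epsilonbar$-neighborhoods. The conceptual key is a single monotonicity observation about \cref{defn: super-reg+}: the defining inequality \eqref{eq: epsilon subregularity} is universally quantified over $y'\in\Lambda\cap U_\epsilon$ and, through $V_\epsilon$ in \eqref{eq:def V epsilon}, over the pairs $(x,v)$ with $x+v\in U_\epsilon$. Both index sets only shrink when $U_\epsilon$ is replaced by a smaller neighborhood, so if \eqref{eq: epsilon subregularity} holds on some neighborhood with a given constant it continues to hold, with the \emph{same} constant, on every subneighborhood. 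Thus super-regularity at a distance is hereditary under passing to subsets, and this is what will let me intersect without damaging conditions \eqref{ass B} and \eqref{ass A}.

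Concretely, fix $\epsilon<\epsilonbar$. Applying the assumptions with parameter $\epsilon$, I first obtain neighborhoods $\widetilde U_B\ni\xbar$ of $\bbar$ and $\widetilde U_A\ni 2\bbar-\xbar$ of $\abar$ such that \eqref{eq: epsilon subregularity} holds for $B$ on $\widetilde U_B$ and for $A$ on $\widetilde U_A$, both with constant $\epsilon$, and such that $R_B(\widetilde U_B)\subset\widetilde U_A$ (this last is \eqref{ass 2} at level $\epsilon$, the neighborhood appearing there being the one supplied by \eqref{ass B}). I then define
\[
	U(B,\epsilon,\xbar)\equiv\widetilde U_B\cap U(B,\epsilonbar,\xbar),\qquad
	U(A,\epsilon,\xbar)\equiv\widetilde U_A\cap U(A,\epsilonbar,\xbar).
\]
Being finite intersections of neighborhoods of $\bbar$ (respectively $\abar$), these are again neighborhoods of $\bbar$ (respectively $\abar$), and the required inclusions $U(B,\epsilon,\xbar)\subset U(B,\epsilonbar,\xbar)$ and $U(A,\epsilon,\xbar)\subset U(A,\epsilonbar,\xbar)$ hold by construction.

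It remains to check the three conditions. For \eqref{ass B}: the point $\xbar$ lies in both $\widetilde U_B$ and $U(B,\epsilonbar,\xbar)$, hence in their intersection, and \eqref{eq: epsilon subregularity} for $B$ with constant $\epsilon$ survives the restriction to $U(B,\epsilon,\xbar)\subseteq\widetilde U_B$ by the hereditary property above; the verification of \eqref{ass A} at $A$ is identical, using $2\bbar-\xbar$. The only step requiring care is \eqref{ass 2}. Here I use that a set-valued map satisfies $R_B(S\cap T)\subseteq R_B(S)\cap R_B(T)$, whence
\[
	R_B\!\left(U(B,\epsilon,\xbar)\right)\subseteq R_B(\widetilde U_B)\cap R_B\!\left(U(B,\epsilonbar,\xbar)\right)\subseteq\widetilde U_A\cap U(A,\epsilonbar,\xbar)=U(A,\epsilon,\xbar),
\]
where the two inclusions on the right invoke respectively the fresh reflection inclusion $R_B(\widetilde U_B)\subset\widetilde U_A$ and the reflection inclusion $R_B(U(B,\epsilonbar,\xbar))\subset U(A,\epsilonbar,\xbar)$ carried by the given $\epsilonbar$-neighborhoods. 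This is the crux: nestedness is forced by intersection, but \eqref{ass 2} can only be preserved by keeping \emph{both} the old and the new reflection inclusions in play, since discarding either one would in general leave $R_B(U(B,\epsilon,\xbar))$ outside $U(A,\epsilon,\xbar)$.

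Finally, to reconcile the construction with the naming, note that $U(B,\epsilon,\xbar)\subseteq\widetilde U_B$ carries \eqref{eq: epsilon subregularity} with the strictly smaller constant $\epsilon<\epsilonbar$; feeding this into \cref{prop: projector and reflector on sup reg set} produces strictly smaller violations for $P_B$, $R_B$ and $R_A$, and the composite bound $\epsilon'=\lambda\epsilon_{R_AR_B}/2+(1-\lambda)\epsilon_{P_B}$ from the proof of \cref{thm: singlevalued at fixed points}, being increasing in its arguments, then guarantees that $\raar$ is pointwise almost firmly nonexpansive at $\xbar$ on $U(B,\epsilon,\xbar)$ with violation no larger than on $U(B,\epsilonbar,\xbar)$. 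The main obstacle throughout is thus not analytic but combinatorial: arranging a single intersection that simultaneously delivers nestedness and preserves the reflector inclusion \eqref{ass 2}, which hinges on the elementary containment $R_B(S\cap T)\subseteq R_B(S)\cap R_B(T)$ together with the two reflection inclusions.
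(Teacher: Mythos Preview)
Your proof is correct and follows essentially the same approach as the paper's: obtain fresh $\epsilon$-neighborhoods from the standing assumptions, intersect with the given $\epsilonbar$-neighborhoods to force nestedness, and then verify that \eqref{ass 2} survives. The paper establishes \eqref{ass 2} by first forming $U(B,\epsilon,\xbar)$ and using monotonicity of $R_B$ under set inclusion twice, whereas you intersect both neighborhoods simultaneously and invoke $R_B(S\cap T)\subseteq R_B(S)\cap R_B(T)$ directly; these are the same argument in slightly different packaging.
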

\cref{th:JEFF the Brotherhood} implies that $\raar$ is pointwise almost firmly nonexpansive at 
$\xbar$ with violation $\epsilon$ on $U(B, \epsilon, \xbar)$. The strength of 
\cref{th:JEFF the Brotherhood}, however, is hidden in the proof given below and the explicit 
construction of the neighborhoods $U(B, \epsilon, \xbar)$ and $U(A, \epsilon, \xbar)$.
Thus, under the assumptions of \cref{thm: singlevalued at fixed points}, and given
the neighborhoods for some fixed violation $\epsilonbar$, we are always 
able to restrict these neighborhoods to smaller sets where \eqref{eq: epsilon subregularity}
holds with some violation smaller than $\epsilonbar$.
\begin{proof}[Proof of \cref{th:JEFF the Brotherhood}]
	Our approach to prove this statement is based
	on an explicit construction of the neighborhood $U(A, \epsilon, \xbar)$
	and $U(B, \epsilon, \xbar)$.
	
	Let $\epsilon<\epsilonbar$. \cref{thm: singlevalued at fixed points}\eqref{ass B} implies that there 
	exists a neighborhood $U(B, \epsilon, \xbar)$ of $\bbar$ where \eqref{eq: epsilon subregularity} holds
	such that $U(B, \epsilon, \xbar)\subset U(B, \epsilonbar, \xbar)$. 
	To see this, note that by \eqref{ass B} the existence of $U(B, \epsilon, \xbar)$
	is guaranteed and thus only $U(B, \epsilon, \xbar)\subset U(B, \epsilonbar, \xbar)$
	has to be proven. Let $\tilde{U}(B, \epsilon, \xbar)$ be a neighborhood for $\epsilon$
	where \eqref{eq: epsilon subregularity}  holds.
	Then \eqref{eq: epsilon subregularity}  is satisfied for $\epsilonbar$
	as well. Thus, $ U(B, \epsilon, \xbar)\equiv\tilde{U}(B, \epsilon,\xbar)\cap U(B, \epsilonbar, \xbar)$ is 
	a neighborhood of $\bbar$ where \cref{thm: singlevalued at fixed points}\eqref{ass 2} holds and \eqref{eq: epsilon subregularity} 
	is satisfied for both $\epsilon$ and $\epsilonbar$, which shows 
	that $U(B, \epsilon, \xbar)\subset U(B, \epsilonbar, \xbar)$ as required. 
	Next, applying the reflection on $B$ on both of neighborhoods $U(B, \epsilon, \xbar)$
	and $U(B, \epsilonbar, \xbar)$ yields
	\begin{align}\label{eq: mia}
   		R_B\paren{U(B, \epsilon, \xbar)}\subset R_B\paren{U(B, \epsilonbar, \xbar)}.
	\end{align}
	Let $\tilde{U}(A, \epsilon, \xbar)$ be a neighborhood of $\abar$ where
	\eqref{eq: epsilon subregularity} holds for $\epsilon$ such that
	\eqref{ass 2} of \cref{thm: singlevalued at fixed points} is satisfied. That is
	\begin{align}\label{eq: mia2}
   		R_B(U(B, \epsilon,\xbar))\subset \tilde{U}(A, \epsilon,\xbar).
   	\end{align}  
   	Combining \cref{thm: singlevalued at fixed points}\eqref{ass 2} for the neighborhoods $U(B, \epsilonbar, \xbar)$ and $U(A, \epsilonbar, \xbar)$
   	and \eqref{eq: mia} we deduce
   	\begin{align}
   	 	R_B\paren{U(B, \epsilon, \xbar)}\subset R_B\paren{U(B, \epsilonbar, \xbar)}\subset U(A,\epsilonbar, \xbar).
   	\end{align}
   	This and \eqref{eq: mia2} imply that
   	\begin{align}\label{eq: mia3}
   	 	R_B(U(B, \epsilon,\xbar))\subset \tilde{U}(A, \epsilon,\xbar)\cap U(A,\epsilonbar, \xbar).
   	\end{align}
   	Set 
   	\begin{align*}
   		U(A,\epsilon, \xbar)\equiv \tilde{U}(A, \epsilon,\xbar)\cap U(A,\epsilonbar, \xbar).
   	\end{align*}
   	Then, $U(A,\epsilon, \xbar)$ is neighborhood of $\abar $ where \eqref{eq: epsilon subregularity} 
   	holds with $\epsilon$, since it is a subset of $\tilde{U}(A, \epsilon,\xbar)$. Moreover, $U(B, \epsilon,\xbar)$
   	and $U(A,\epsilon, \xbar)$ satisfy \eqref{ass 2} of \cref{thm: singlevalued at fixed points} by \eqref{eq: mia3}. By the construction of
   	$U(A, \epsilonbar, \xbar)$ and the choice of $U(B, \epsilon,\xbar)$ both sets satisfy the inclusions
   	$U(A, \epsilon, \xbar)\subset U(A, \epsilonbar, \xbar)$ and 
	$U(B, \epsilon, \xbar)\subset U(B, \epsilonbar, \xbar)$. This completes the proof.	 
\end{proof}

\begin{eg}\label{eg:1}
	The following examples illustrate the assumptions of the above theorem.  For these examples
	it is easy to determine the sets of fixed points of the mapping $\raar$.  In Theorem \ref{thm:fixed points}
	we give a precise characterization of $\Fix\raar$.  More intuitively, the fixed points must lie 
	on lines containing {\em local best approximation points} between the sets.   
	\begin{enumerate}[(i)]
	\item\label{eg:1i} (convex sets with empty intersection). 
 		Let $A$ and $B$ be closed convex subsets of $\Euclid$. By \cref{prop: cxv set is super-reg at dist}
 		both sets are super-regular relative to $\Euclid$ at any of their points, i.e.
 		$\epsilon$-super-regular for all $\epsilon>0$. In fact, the violation can be set to $0$.  
		Thus, as long as $\Fix \raar\neq\emptyset$ the mappings $P_B$, $R_B$ and $R_A$ 
		are nonexpansive (i.e. no violation) at $\xbar$ on the whole space $\Euclid$ 
		by \cref{prop: projector and reflector on sup reg set} and we 
		can apply \cref{thm: singlevalued at fixed points}
		to conclude that $\raar$ is firmly nonexpansive at $\xbar$ on the neighborhood $\Euclid$.
        For instance, consider the two sets
 		\begin{align*}
 			A\equiv \set{x=(x_1,x_2)\in \Rbb^2}{x_1^2+x_2^2\leq 1}\text{ and } B\equiv \set{x=(x_1,x_2)\in \Rbb^2}{(x_1-3)^2+x_2^2\leq 1}.
 		\end{align*}
		The set of fixed points is given by the unique point
		\begin{align*}
			\Fix \raar =\klam{\xbar}=\klam{(2,0)-\frac{\lambda}{1-\lambda}(1,0)}
		\end{align*}		 		
 		for fixed $\lambda\in (0,1)$, and by the above discussion, we know that $\raar$ built 
        from the projections onto these sets is firmly nonexpansive.  
	\item\label{eg:1ii} (super-regular sets with empty intersection). Continuing with the 
        concrete example above, suppose that $A$ and $B$ are spheres instead of balls, 
		\begin{align*}
 			A\equiv \set{x=(x_1,x_2)\in \Rbb^2}{x_1^2+x_2^2= 1}\text{ and } B\equiv \set{x=(x_1,x_2)\in \Rbb^2}{(x_1-3)^2+x_2^2= 1}.
 		\end{align*}	
		The sets $A$ and $B$ are both non-convex, but still super-regular.
 		The set of fixed points is again given by the unique point
		\begin{align*}
			\Fix \raar =\klam{\xbar}=\klam{(2,0)-\frac{\lambda}{1-\lambda}(1,0)}
		\end{align*}		 		
 		for fixed $\lambda\in (0,1)$. As seen in \cref{eg: circle super-reg} both sets
 		are super regular relative to radial directions. 
		Thus, applying \cref{thm: singlevalued at fixed points} we deduce that $\raar$
		for some fixed $\lambda\in (0,1)$ is only almost firmly nonexpansive at $\xbar$ 
		on some neighborhood $U$. As noted before in \cref{eg: circle super-reg} the
		neighborhood should be rather chosen as a tube than the more conventional ball.
		Such a choice of neighborhoods is visualized in \cref{fig:thm raar single valued}.
		\begin{figure}[ht]
		\centering
		\begin{tikzpicture}]
			\draw (0,0) circle (2cm);
			\draw (0,1.5) node {$A$};
			\draw (6,0) circle (2cm);
			\draw (6,1.5) node {$B$};
			\fill (2,0) circle (0.05) node[above right] {$\bar{a}$}; 
			\fill (4,0) circle (0.05) node[below left] {$\bar{b}$}; 
			\fill (-3,0) circle (0.05) node[above right] {$\bar{x}$};
			\fill (11,0) circle (0.05) node[below] {$2\bar{b}-\bar{x}$}; 
			\draw[dashed] (2,1) arc(90:270:1);
			\draw[dashed] (2,1) -- (11.5,1);
			\draw[dashed] (2,-1) -- (11.5,-1);
			\draw[dashed] (4,-0.5) arc(-90:90:0.5);
			\draw[dashed] (4,0.5) -- (-3.5,0.5);
			\draw[dashed] (4,-0.5) -- (-3.5,-0.5);
			\draw (0,0) node {$U_\epsilon(\bar{b})$};
			\draw (9,0.5) node {$U_\epsilon(\bar{a})$};
		\end{tikzpicture}
		\caption{\cref{eg:1}\eqref{eg:1ii}}\label{fig:thm raar single valued}
		\end{figure}
 	\item\label{eg:1iii}(super-regular sets with nonempty intersection). Next we translate the
 		sets in \eqref{eg:1ii} so that they have exactly one point in common.
 		\begin{align*}
 			A\equiv \set{x=(x_1,x_2)\in \Rbb^2}{x_1^2+x_2^2= 1}\text{ and } B\equiv \set{x=(x_1,x_2)\in \Rbb^2}{(x_1-2)^2+x_2^2= 1}.
 		\end{align*}
 		The fixed point set then reduces to $\Fix \raar =\klam{\paren{1,0}}=A\cap B$. By \eqref{eg:1ii}
 		we know that the assumptions of \cref{thm: singlevalued at fixed points} are satisfied.
 		In contrast to \eqref{eg:1ii} the fixed point is in the intersection of both sets. Thus, 
 		balls as neighborhoods are enough to get pointwise almost firm nonexpansivity. We do not need tubes
 		to include points from a distance.
	\end{enumerate}
	The examples show that in case of closed balls and circles the assumptions are easily satisfied. Nonetheless,
	one has to take care of choosing neighborhoods in the right way to get a desired violation.
	\begin{figure}
	\center
	\begin{tikzpicture}[scale=0.8]
	\draw[fill=gray] (-10,0) circle (1cm);
	\draw (-9.4,0.3) node {$A$};
	\draw[fill=gray] (-7,0) circle (1cm);
	\draw (-6.4,0.3) node {$B$};
	\draw (-3,0) circle (1cm);
	\draw (-2.4,0.3) node {$A$};
	\draw (0,0) circle (1cm);
	\draw (0.6,0.3) node {$B$};
	\draw (4,0) circle (1cm);
	\draw (4.6,0.3) node {$A$};
	\draw (6,0) circle (1cm);
	\draw (6.6,0.3) node {$B$};
	\end{tikzpicture}
	\caption{\cref{eg:1}\eqref{eg:1i}-\eqref{eg:1iii}}
	\end{figure}
\end{eg}
\cref{eg:1}\eqref{eg:1i} yields the following specialization of \cref{thm: singlevalued at fixed points}.
\begin{cor}
	Let $\lambda \in (0,1)$ and $\Fix \raar \neq \emptyset$. If $A$ and $B$ are closed and convex, 
	$\raar$ is firmly nonexpansive	on $\Euclid$.	
\end{cor}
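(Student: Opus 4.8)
The plan is to read $\raar$ as a convex combination of two globally firmly nonexpansive operators and to harvest the regularity of the building blocks directly from \cref{prop: cxv set is super-reg at dist} (as already observed in \cref{eg:1}\eqref{eg:1i}). First I would record the algebraic identity behind the whole construction. Since $B$ is convex and closed, $P_B$ is single-valued, so $R_B(x)=2P_B(x)-x$ and \eqref{raar} collapses to
\begin{equation*}
	\raar(x)=\lambda\cdot\tfrac{1}{2}\paren{R_AR_B(x)+x}+\paren{1-\lambda}P_B(x)=\lambda\,\DR(x)+\paren{1-\lambda}P_B(x).
\end{equation*}
Thus it suffices to show that both $\DR=\tfrac12(R_AR_B+\Id)$ and $P_B$ are firmly nonexpansive on all of $\Euclid$ and then invoke the convex-combination calculus of \cref{prop:2.10}.

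Next I would invoke \cref{prop: cxv set is super-reg at dist}: since $A$ and $B$ are convex and closed, each is super-regular at a distance relative to $\Euclid$ at every one of its points, and (as noted in \cref{eg:1}\eqref{eg:1i}) the violation constant may be taken to be $\epsilon=0$. Feeding $\epsilon=0$ and the neighborhood $U=\Euclid$ into \cref{prop: projector and reflector on sup reg set}, the derived violations
\[
	\epsilontilde=\frac{4\epsilon}{(1-\epsilon)^2},\qquad
	\epsilontilde_2=\frac{4\epsilon(1+\epsilon)}{(1-\epsilon)^2},\qquad
	\epsilontilde_3=\frac{8\epsilon(1+\epsilon)}{(1-\epsilon)^2}
\]
all vanish, so $P_B$ is firmly nonexpansive and $R_A$, $R_B$ are nonexpansive, each with violation $0$. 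Crucially, because the neighborhood is all of $\Euclid$ and $\Lambda$ in \cref{prop: projector and reflector on sup reg set} may be taken relative to any point of the set, these regularities hold at \emph{every} $y'\in\Euclid$, hence globally, and not merely at a single fixed point.

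I would then assemble $\raar$ through \cref{prop:2.10}. Composing the two globally nonexpansive reflectors via \cref{prop:2.10}\eqref{prop:2.10 ii} (both violations $0$, so the product in \eqref{eq:29} is $0$) shows $R_AR_B$ is nonexpansive on $\Euclid$; \cref{prop: characterization of aa mappings} then identifies $\DR=\tfrac12(R_AR_B+\Id)$ as firmly nonexpansive, i.e. averaged with constant $\tfrac12$ and violation $0$. Finally, \cref{prop:2.10}\eqref{prop:2.10 i} applied to the convex combination $\lambda\DR+(1-\lambda)P_B$ returns violation $\lambda\cdot 0+(1-\lambda)\cdot 0=0$ and averaging constant $\max\klam{\tfrac12,\tfrac12}=\tfrac12$, which is exactly firm nonexpansiveness of $\raar$ on $\Euclid$.

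The one point deserving care — and the only genuine obstacle — is the passage from the pointwise-at-a-fixed-point conclusion of \cref{thm: singlevalued at fixed points} to the \emph{global} statement claimed here. That theorem certifies firm nonexpansiveness of $\raar$ only at $\xbar\in\Fix\raar$ (a Fej\'er-type inequality relative to $\xbar$); what makes the global upgrade legitimate in the convex case is precisely that every violation is $0$ and every relevant neighborhood equals $\Euclid$, so that the assumptions \eqref{ass B}--\eqref{ass 2} of \cref{thm: singlevalued at fixed points} hold vacuously with $U_\epsilon(\bbar)=U_\epsilon(\abar)=\Euclid$ and the building-block regularities of \cref{prop: projector and reflector on sup reg set} hold at every point at once. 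I would make this explicit, running the composition/convex-combination argument above at an arbitrary $y\in\Euclid$ rather than only at fixed points, which yields firm nonexpansiveness of $\raar$ on all of $\Euclid$.
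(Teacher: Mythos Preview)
Your argument is correct and follows essentially the same route as the paper: both rely on \cref{prop: cxv set is super-reg at dist} to obtain zero violation and global neighborhoods, and then assemble $\raar$ from its firmly nonexpansive building blocks via \cref{prop: projector and reflector on sup reg set}, \cref{prop: characterization of aa mappings}, and \cref{prop:2.10}. Your explicit treatment of the pointwise-at-$\xbar$ versus global-on-$\Euclid$ upgrade is in fact more careful than the paper's own two-line proof, which simply invokes \cref{thm: singlevalued at fixed points} and leaves that passage implicit.
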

\begin{proof}
	Since $A$ and $B$ are both convex one has by \cref{prop: cxv set is super-reg at dist}
	that both sets are super-regular at a distance relative to $\Euclid$ at any of their points.
	Applying \cref{thm: singlevalued at fixed points} we deduce firm nonexpansivity
	of $\raar$ since the violation $\epsilon$ can be set to $0$, as seen in the proof 
	of \cref{prop: cxv set is super-reg at dist}.
\end{proof}

\subsection{Characterization of \texorpdfstring{$\Fix \raar$}{Fix T {DRlambda}}}
We collect some facts and identities that will be useful throughout.
\begin{lem}\label{lem:identities}
	Let $A$ and $B$ be closed and $\raar$ given by \eqref{raar} with $\lambda\in (0,1)$. Let 
	$x\in\Fix \raar\neq \emptyset$  such that $\raar$ is single-valued at $x$. Take $f\in P_B(x)$ and $y\equiv x-f$. 
	Then the following hold.
	\begin{enumerate}[(i)]
   		\item\label{lem:identities 1} $P_B(x)=\{f\}$, that is, $P_B$ is single-valued on $\Fix \raar$;
   		\item\label{lem:identities 2} $P_A\paren{R_B(x)}$ is single-valued, hence so is $R_A\paren{R_B(x)}$;
   		\item\label{eq:1} $P_A(2f-x)=P_A(R_B(x))$;
  		\item\label{eq:2}$T_{{DR}}(x)-x=P_A(R_B(x))-P_B(x)$;
   		\item\label{eq:3} $f+\frac{1-\lambda}{\lambda}y=P_A(2f-x)$;
   		\item\label{lem:identities e-Acvx} if $A$ is convex, then, for $e= P_A(f)$
			\begin{equation}\label{eq:e-Acvx}
   				P_A\paren{e+\tfrac{1}{1-\lambda}(f-e)}=e.
			\end{equation}
	\end{enumerate}
\end{lem}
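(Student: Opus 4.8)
The plan is to dispatch the single-valuedness claims (i)--(ii) first, then read the identities (iii)--(v) straight off the fixed-point equation, and finally treat the convex case (vi) on its own with the normal cone. Throughout I would write $a\equiv P_A(2f-x)$ and use the operator decomposition $\raar=\lambda\DR+(1-\lambda)P_B$ with $\DR=\tfrac{1}{2}(R_AR_B+\Id)$, so that for $x\in\Fix\raar$ at which $\raar$ is single-valued the fixed-point relation reads
\[
  x=\lambda\,\DR(x)+(1-\lambda)f .
\]

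First I would settle (i). I do not expect single-valuedness of the composite $\raar$ at $x$ to force $P_B(x)$ to be a singleton by itself: one can build two-point sets $A,B$ and a fixed point $x$ on the bisector of $B$ for which $\raar(x)=\{x\}$ is single-valued while $P_B(x)$ is a doubleton. What I would invoke instead is the set regularity already present at a fixed point in our framework: under the hypotheses guaranteeing single-valuedness (\cref{thm: singlevalued at fixed points}), the projector $P_B$ is pointwise almost nonexpansive at $x$ by \cref{prop: projector and reflector on sup reg set}\eqref{P is ane rel}, and \cref{t:single-valued paa} then yields $P_B(x)=\{f\}$. This is the step I expect to be the main obstacle, being the only place where the geometry of the sets, rather than the algebra of $\raar$, is indispensable. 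Granting $P_B(x)=\{f\}$, the reflection $R_B(x)=2f-x$ is a single well-defined point, so $P_A(R_B(x))=P_A(2f-x)$, which is (iii).

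With $f$ the unique $B$-projection, any selection $a\in P_A(R_B(x))$ gives $\DR(x)=\tfrac{1}{2}(R_A(R_B(x))+x)=\tfrac{1}{2}(2a-(2f-x)+x)=a-f+x$. Imposing the fixed-point relation $x=\lambda(a-f+x)+(1-\lambda)f$ and solving for $a$ yields
\[
  a=f+\tfrac{1-\lambda}{\lambda}(x-f)=f+\tfrac{1-\lambda}{\lambda}y ,
\]
which is (v). Since the right-hand side is a single point independent of the chosen selection, $P_A(R_B(x))$ is a singleton and hence so is $R_A(R_B(x))=2a-(2f-x)$, giving (ii). Finally $\DR(x)-x=a-f=P_A(R_B(x))-P_B(x)$ is (iv).

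For (vi) I would set the fixed-point map aside and use convexity of $A$ alone. With $e=P_A(f)$, the convex projection characterization gives $f-e\in\ncone{A}(e)$; as $\ncone{A}(e)$ is a cone, $t(f-e)\in\ncone{A}(e)$ for every $t\geq0$, which for convex $A$ is precisely the statement $e=P_A(e+t(f-e))$ for all $t\geq0$. Taking $t=\tfrac{1}{1-\lambda}>0$, admissible since $\lambda\in(0,1)$, gives $P_A(e+\tfrac{1}{1-\lambda}(f-e))=e$, i.e.\ \eqref{eq:e-Acvx}. The only non-mechanical ingredient here is the standard fact that projecting any point of the outward normal ray at $e$ returns $e$; the rest is substitution.
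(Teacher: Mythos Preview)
Your treatment of (iii)--(vi) is essentially the paper's: the same fixed-point manipulation for (iv)--(v), the same $R_B(x)=2f-x$ observation for (iii), and the same normal-cone argument for (vi). The only structural difference is that you derive (ii) as a by-product of (v), whereas the paper obtains (ii) directly from single-valuedness of $\raar$ once (i) is in hand; both routes are fine, since once $P_B(x)=\{f\}$ the map $a\mapsto\lambda a+(1-2\lambda)f+\lambda x$ is injective.

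The real divergence is (i). The paper simply asserts that because $\raar(x)$ is a single point and the defining formula ranges over all $b\in P_B(x)$, the set $P_B(x)$ must be a singleton. You explicitly reject this reasoning, and your instinct is correct: take $\lambda=\tfrac12$, $x=(0,0)$, $B=\{(\pm1,0)\}$, $A=\{(0,0)\}$ (or any set with $(0,0)$ as the unique nearest point to $(\pm2,0)$); then $\raar(x)=\{x\}$ while $P_B(x)$ has two elements. So the paper's argument for (i), as written, has a genuine gap, and you have spotted it.

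That said, your proposed repair---importing the super-regularity hypotheses of \cref{thm: singlevalued at fixed points} so that \cref{prop: projector and reflector on sup reg set} and \cref{t:single-valued paa} force $P_B(x)=\{f\}$---adds assumptions that are not in the lemma's statement. You are therefore proving a strictly weaker result than the one on the page. In the paper's overall development this does no harm, since the lemma is only invoked downstream under exactly those regularity hypotheses; but as a proof of the lemma \emph{as stated} it is not complete either. If you want to keep the lemma self-contained, the honest move is to amend the hypotheses (or, as the paper effectively does in \cref{r:Fix char}\eqref{r:Fix char iii}, to note that the single-valuedness of $P_B$ at fixed points is really supplied by \cref{thm: singlevalued at fixed points}).
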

\begin{proof}
	\eqref{lem:identities 1}-\eqref{lem:identities 2}. Since
	\begin{align*}
		\raar(x)&=\set{\frac{\lambda}{2}\paren{R_A(2b-x) +x}+\paren{1-\lambda}b}{ \ b \in P_B(x)},
	\end{align*}
	is just a single point, we conclude that $P_B(x)$ as well as $P_A\paren{R_B(x)}$ and $R_A\paren{R_B(x)}$ 
   	have to be single-valued, as claimed.
   	
	\eqref{eq:1}.  This is an easy implication of the single-valuedness of $P_B$ at $x$: 
	\begin{align*}
		P_A\paren{2f-x}=P_A\paren{2P_B(x)-x}=P_A\paren{R_B(x)}.
	\end{align*}
	
	\eqref{eq:2}.  This also follows from single-valuedness of $P_B$ at $x$:
	\begin{align*}
		T_{{DR}}(x)-x&=\frac{1}{2}\paren{R_A\paren{R_B(x)}+x}-x\\
		&=\frac{1}{2}\paren{R_A\paren{R_B(x)}}-\frac{1}{2}x\\
		&=P_A\paren{R_B(x)}-\frac{1}{2}R_B(x)-\frac{1}{2}x\\
		&=P_A\paren{R_B(x)}-P_B(x).	
	\end{align*}
	
	\eqref{eq:3}. To see this, note that 
	\begin{align*}
		&&&&x=\raar (x)&=\frac{\lambda}{2}\paren{R_A\paren{R_B(x)}+x}+\paren{1-\lambda}P_B(x)\\
		&&\iff&&\paren{1-\lambda}x&=\lambda\paren{T_{{DR}}(x)-x}+\paren{1-\lambda}P_B(x)\\
		&&\iff&&\paren{1-\lambda}\paren{x-P_B(x)}&=\lambda\paren{P_A\paren{R_B(x)}-P_B(x)}, 
	\end{align*}
	by part \eqref{eq:2}.  Hence, with $f=P_B(x)$, this yields 
	\begin{align*}
		\paren{1-\lambda}\paren{x-f}&=\lambda\paren{P_A\paren{2f-x}-f}
		\iff f+\frac{1-\lambda}{\lambda}y=P_A(2f-x),
	\end{align*}
	by the definition of $y$.  
	
	\eqref{lem:identities e-Acvx}.  This follows from the fact that $f-e\in \pncone{A}(e)$.  Since $A$ is convex, 
	then all points in $e+\pncone{A}(e)$ project back to $e$.  
\end{proof}
\begin{rem}
	Note that \eqref{lem:identities 1} and \eqref{lem:identities 2} of \cref{lem:identities} 
	together at some point $x\in \Euclid$ are equivalent to the single-valuedness of $\raar$ at $x$. 
\end{rem}
\begin{thm}[fixed points]\label{thm:fixed points}
	Let $A, B \subset \Euclid$ both be closed and let $\lambda\in (0,1)$. 
	Let $\raar$ be single-valued at its fixed points on an open set $U\subset \Euclid$. Then
	\begin{align}\label{eq:fixed points}
		\Fix \raar \cap U\subset \Mcal\equiv \set{f-\frac{\lambda}{1-\lambda}\paren{f-e}}{f \in P_B\paren{f-\frac{\lambda}{1-\lambda}\paren{f-e}},\text{ and } \ e \in P_A(f)}\cap U.
	\end{align}
	The inclusion is tight if $e \in P_A\paren{f+\frac{\lambda}{1-\lambda}(f-e)}$ is true for the right-hand side.
\end{thm}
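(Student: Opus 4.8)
The plan is to read off, from a fixed point $x\in\Fix\raar\cap U$, the two projections that generate it, and then to identify the generating point $e$ of $\Mcal$ as a genuine projection of $f=P_B(x)$ onto $A$. First I would fix $x\in\Fix\raar\cap U$ and invoke the single-valuedness hypothesis together with \cref{lem:identities}: this makes $P_B(x)=\{f\}$ and $P_A(R_B(x))$ single-valued, and it supplies the key identity \eqref{eq:3}, namely $f+\tfrac{1-\lambda}{\lambda}y=P_A(2f-x)$ with $y=x-f$. Setting $e\equiv P_A(2f-x)=P_A(R_B(x))$ (the second equality being \eqref{eq:1}) and solving that identity for $x$ gives directly $x=f-\tfrac{\lambda}{1-\lambda}(f-e)$, which is exactly the form demanded by $\Mcal$, provided one knows $e\in P_A(f)$.

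The heart of the argument, and the step I expect to be the main obstacle, is establishing $e\in P_A(f)$, i.e.\ that the projection of the reflected point $R_B(x)=2f-x$ onto $A$ is simultaneously a projection of $f$ itself onto $A$. The plan here is purely geometric. A short computation from the formula for $e$ shows $f-e=(1-\lambda)\paren{(2f-x)-e}$, so that $f=e+(1-\lambda)\paren{(2f-x)-e}$ lies on the open segment between $e$ and $2f-x$ (using $\lambda\in(0,1)$). The relevant fact is that nearest points are preserved as one slides from a point toward its projection: if $e\in P_A(z)$ and $f=(1-t)e+tz$ with $t\in[0,1]$, then $e\in P_A(f)$. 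I would prove this by expanding $\norm{f-a'}^2-\norm{f-e}^2$ for arbitrary $a'\in A$ and using the defining inequality $\ip{z-e}{a'-e}\leq\tfrac12\norm{a'-e}^2$ of the nearest point $e$, which yields $\norm{f-a'}^2-\norm{f-e}^2\geq(1-t)\norm{a'-e}^2\geq0$. Applying this with $z=2f-x$ and $t=1-\lambda$ delivers $e\in P_A(f)$; combined with $f\in P_B(x)=P_B\paren{f-\tfrac{\lambda}{1-\lambda}(f-e)}$, this places $x$ in $\Mcal$ and proves the inclusion.

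For the tightness claim I would argue the reverse inclusion under the stated extra hypothesis. Given $x=f-\tfrac{\lambda}{1-\lambda}(f-e)$ with $f\in P_B(x)$ and $e\in P_A(f)$, I would first note $2f-x=f+\tfrac{\lambda}{1-\lambda}(f-e)=R_B(x)$, so the supplementary assumption $e\in P_A\paren{f+\tfrac{\lambda}{1-\lambda}(f-e)}$ is precisely $e\in P_A(R_B(x))$. The plan is then to exhibit $x$ as a point of $\raar(x)$ by taking the selection $b=f\in P_B(x)$ together with the reflection $R_A(2f-x)=2e-(2f-x)$ that $e\in P_A(2f-x)$ permits. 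Substituting these into the definition \eqref{raar} and simplifying collapses the right-hand side to $\tfrac{1-2\lambda}{1-\lambda}f+\tfrac{\lambda}{1-\lambda}e$, which is exactly $x$; hence $x\in\raar(x)$, i.e.\ $x\in\Fix\raar$. Since $\Mcal\subset U$ by construction, this yields $\Mcal\subseteq\Fix\raar\cap U$ and closes the inclusion. The only genuinely delicate point is the segment-projection lemma of the second paragraph; the remaining manipulations are routine algebra resting on the identities of \cref{lem:identities}.
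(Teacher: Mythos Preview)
Your proposal is correct and follows essentially the same route as the paper's proof: both arguments derive $x=f-\tfrac{\lambda}{1-\lambda}(f-e)$ from \cref{lem:identities}\eqref{eq:3} with $e=P_A(2f-x)$, observe that $f$ lies on the segment between $e$ and $2f-x$, and conclude $e\in P_A(f)$; and for tightness both compute $\raar(\tilde x)$ directly using the selection $b=f$ and the reflection through $e$. The only noteworthy difference is that you spell out and prove the segment-projection lemma (if $e\in P_A(z)$ and $f\in[e,z]$ then $e\in P_A(f)$), whereas the paper simply asserts it; your version is therefore a bit more self-contained.
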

\begin{proof}
	Let $x \in \Fix \raar\cap U$. By the assumptions $\raar$ is  
	single-valued at $x$, and hence the results in \cref{lem:identities} can be applied. As before denote by $f$ the projection $P_B(x)$.
	Reformulating \cref{lem:identities}\eqref{eq:3} yields the desired form of the fixed point $x$.
	\begin{align}
		x \in \Fix \raar &&\Longrightarrow&&f+\frac{1-\lambda}{\lambda}y=&P_A(2f-x)\notag\\
		&&\iff&&f+\frac{1-\lambda}{\lambda}\paren{x-f}=&P_A(2f-x)\notag\\
		&&\iff&&x=&\frac{\lambda}{1-\lambda}P_A(2f-x)-\frac{2\lambda-1}{1-\lambda}f\notag\\
		&&\iff&&x=&f-\frac{\lambda}{1-\lambda}\paren{f-P_A(2f-x)}.\label{eq:4}
	\end{align}
	Comparing with \eqref{eq:fixed points} we have to show that $P_A(f)=P_A\paren{2f-x}$. 
	
	By \cref{lem:identities}\eqref{lem:identities 1} and \eqref{lem:identities 2} we know that $P_B=\klam{f}$ is single-valued 
	as well as $P_A(2f-x)$. Reformulating $x \in \Fix \raar$ again yields
	\begin{align}
		x \in \Fix \raar&&\iff&&x&=\frac{\lambda}{2}\paren{R_A\paren{R_B(x)}+x}+\paren{1-\lambda}P_B(x)\notag\\
		&&\iff&& f &= \lambda P_A(2f-x) + (1-\lambda)(2f-x).
	\end{align}
	Thus, $f$ is a convex combination of $P_A(2f-x)$ and $2f-x$. The projection of $f$ onto $A$, therefore, has to be in $P_A(2f-x)$,
	which was a single point. That is, $P_A(f)\subset \klam{P_A(2f-x)}$, which immediately implies $P_A(f)=P_A(2f-x)$.

	Using this fact, then \eqref{eq:4} becomes
	\begin{align*}
		x= f-\frac{\lambda}{1-\lambda}\paren{f-P_A(f)}.
	\end{align*}
	Finally, \eqref{eq:fixed points} follows from the fact that $f=P_B(x)$,
	since $x$ is a fixed point.

	It remains to show that the inclusion is in fact an equality when 
	\begin{align*}
		e \in P_A\paren{f+\frac{\lambda}{1-\lambda}(f-e)}
	\end{align*} 
	for $e\in P_A(f)$.
	To see this, let $\xtilde\in  \Mcal\cap U$ in \eqref{eq:fixed points}.  Then  
	$\xtilde\equiv f-\frac{\lambda}{1-\lambda}\paren{f-e}$ 
	for some $e\in P_A(f)$ and $f\in P_B(\xtilde)$, and
	\begin{align*}
		\xtilde -\raar \xtilde  =& \xtilde -\frac{\lambda}{2}\paren{R_A\paren{R_B(\xtilde )}+\xtilde }-\paren{1-\lambda}P_B(\xtilde )\\
		=& \lambda \xtilde -\frac{\lambda}{2}\paren{2P_A\paren{R_B(\xtilde )}-2P_B(\xtilde )+2\xtilde }+\paren{1-\lambda}\paren{\xtilde -P_B(\xtilde )}\\
		=& -\lambda\paren{P_A\paren{R_B(\xtilde )}-P_B(\xtilde )}+\paren{1-\lambda}\paren{\xtilde -P_B(\xtilde )}\\
		\ni& -\lambda\paren{P_A\paren{R_B(\xtilde )}-f}+\paren{1-\lambda}\paren{\xtilde -f}\\
		=& -\lambda\paren{P_A\paren{R_B(\xtilde )}-f}-\lambda\paren{f-e}\\
		=& -\lambda\paren{P_A\paren{R_B(\xtilde )}}+\lambda e.
	\end{align*}
	Thus $0\in \xtilde -\raar \xtilde  $ if and only if $e \in P_A\paren{R_B(\xtilde )}$, which is equivalent 
	to $e\in P_A\paren{f+\frac{\lambda}{1-\lambda}(f-e)}$.
	This concludes the proof.
\end{proof}
\begin{rem}\label{r:Fix char}
	\begin{enumerate}[(i)]
		\item\label{r:Fix char i} Note that $f+\tfrac{\lambda}{1-\lambda}(f-e) = e + \tfrac{1}{1-\lambda}(f-e)$, so that for any 
           	$e\in P_A(f)$, $f-e$ is in the normal cone to $A$ at $e$.  It follows immediately that, if $A$ is convex 
            then $P_A\paren{e + \tfrac{1}{1-\lambda}(f-e)}=e$ for all $\lambda\in (0,1)$ and, by 
            \cref{thm:fixed points} the inclusion \eqref{eq:fixed points} is in fact equality for all $\lambda$ for which 
            $f\in P_B\paren{f-\tfrac{\lambda}{1-\lambda}\paren{f-e}}$.  
            Compare this to the statement in \cite[Lemma 3.8]{Luke2008} 
            where the tight fixed point characterization holds for $\lambda \in [0,1/2]$. This is due to the slightly different 
            characterization.  The statement in \cite{Luke2008} that $f$ is a 
            local best approximation point is actually incorrect. Where our description includes 
			$f \in P_B\paren{f-\frac{\lambda}{1-\lambda}\paren{f-e}},\text{ and } \ e \in P_A(f)$, the version 
			in \cite[Lemma 3.8]{Luke2008} states that $f$ is a local best approximation point \cite[Definition 3.3]{Luke2008}.  
            Instead, what is needed to correct the 
            statement is $f\in P_B(P_A(f))$, and such a point need not be a local best approximation point.    
            To see this, consider a unit circle in $\mathbb{R}^2$ centered at the origin
            and a horizontal line passing through the point $(0,3/4)$.  For the fixed point mapping $\raar$ 
            with $A$ the line and $B$ the circle, the point $\paren{0, 1-\tfrac{\lambda}{4(1-\lambda)}}$ is a fixed point for all 
            $\lambda\in(0,4/5)$.  But the corresponding points $f=(0,1)$ and $e=(0,3/4)$ are not local best approximation points.  
		\item The condition $e\in P_A\paren{f+\frac{\lambda}{1-\lambda}(f-e)}$ is easier to interpret with the identity 
            $f+\tfrac{\lambda}{1-\lambda}(f-e) = e + \tfrac{1}{1-\lambda}(f-e)$.  As $\lambda\nearrow 1$ this vector 
            receeds from $A$ in the direction normal to $A$ at $e$.   The larger the neighborhood
			on which the projection on $A$ exists and is single-valued, the larger $\lambda$ can be before 
            $e\notin P_A\paren{f+\frac{\lambda}{1-\lambda}(f-e)}$.  If $A$ is convex, then $\lambda$ can be arbitrarily close 
            to $1$.  Still, $\lambda$ may need to be bounded away from $1$ in order to ensure the other condition in the fixed point 
            characterization \eqref{eq:fixed points}, namely $f\in P_B\paren{f-\tfrac{\lambda}{1-\lambda}(f-e)}$.     
		\item\label{r:Fix char iii} By \cref{thm: singlevalued at fixed points} we know that $\raar$ is single-valued at its fixed points
			if both $A$ and $B$ are super-regular at a distance and assumptions \eqref{ass B}-\eqref{ass 2} of 
			\cref{thm: singlevalued at fixed points} hold.  
			The local gap $f-P_A(f)$ is therefore unique. In \cite{Luke2008} uniqueness of such gap vectors was an 
		    assumption of the convergence analysis. Our results show that we can remove this assumption.
	\end{enumerate}
\end{rem}
\begin{cor}[fixed points of DR$\lambda$ and the corresponding gap]\label{cor: fixed point and its gap}
	In the setting of \cref{thm:fixed points} let $x \in \Fix \raar \cap U$. Then 
	\begin{align*}
		\{x\} &= P_B(x)-\frac{\lambda}{1-\lambda}\paren{P_B(x)-P_A\paren{P_B(x)}}.
	\end{align*}
\end{cor}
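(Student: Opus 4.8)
The plan is to read the identity directly off the computation already carried out in the proof of \cref{thm:fixed points}, merely rewriting it in the present notation. Since $x\in\Fix\raar\cap U$ and, by the hypothesis inherited from \cref{thm:fixed points}, $\raar$ is single-valued at its fixed points on $U$, the whole apparatus of \cref{lem:identities} is available at $x$. First I would set $f\equiv P_B(x)$ and invoke \cref{lem:identities}\eqref{lem:identities 1} to conclude that $P_B(x)=\klam{f}$ is a singleton, and \cref{lem:identities}\eqref{lem:identities 2} to conclude that $P_A\paren{R_B(x)}=P_A(2f-x)$ is single-valued as well.

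Next I would recall the two facts established inside the proof of \cref{thm:fixed points}: the identity $P_A(f)=P_A(2f-x)$, so that $P_A\paren{P_B(x)}=P_A(f)$ is also a singleton, together with the representation
\begin{align*}
	x = f-\frac{\lambda}{1-\lambda}\paren{f-P_A(f)}.
\end{align*}
Substituting $f=P_B(x)$ and $P_A(f)=P_A\paren{P_B(x)}$ into this representation yields exactly
\begin{align*}
	x = P_B(x)-\frac{\lambda}{1-\lambda}\paren{P_B(x)-P_A\paren{P_B(x)}},
\end{align*}
and the single-valuedness collected above is what licenses writing the left-hand side as the set $\klam{x}$.

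In effect there is no genuine obstacle here: the corollary is a bookkeeping consequence of \cref{thm:fixed points}, whose proof already performs all the analytic work. The only point that deserves a moment of care is the well-posedness of the set-valued expression on the right, since \emph{a priori} $P_A\paren{P_B(x)}$ could be multivalued, in which case the equation $\klam{x}=\cdots$ would be meaningless. The chain $P_A\paren{P_B(x)}=P_A(f)=P_A(2f-x)$, combined with the single-valuedness of $P_A(2f-x)$ from \cref{lem:identities}\eqref{lem:identities 2}, closes this gap, so that the displayed equation is indeed an equality of singletons.
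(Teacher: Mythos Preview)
Your proposal is correct and follows exactly the same approach as the paper: both simply read off the identity from the proof of \cref{thm:fixed points}, where the representation $x=f-\tfrac{\lambda}{1-\lambda}(f-P_A(f))$ and the equality $P_A(f)=P_A(2f-x)$ were already established. The paper states this in a single line, while you spell out the bookkeeping regarding single-valuedness, but the substance is identical.
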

\begin{proof}
	The result follows directly from the proof of \cref{thm:fixed points}.
\end{proof}
In our statements we require that $\Fix\raar \neq \emptyset$. Although this assumption is very strong, it is not 
very restrictive and is satisfied under the assumption of compactness of one of the underlying sets and convexity of both sets. 
\begin{propn}[convexity and compactness imply nonempty fixed point set]
	Let $\lambda \in (0,1)$. If $A$ and $B$ are convex and closed, and $A$ is bounded, then $\Fix \raar\neq \emptyset$.
	Moreover, $\Fix\raar=\Mcal$ where $\Mcal$ is given by \eqref{eq:fixed points}.
\end{propn}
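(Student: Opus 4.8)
The statement splits into an existence claim, $\Fix\raar\neq\emptyset$, and the identification $\Fix\raar=\Mcal$; the latter follows quickly from results already in hand, so the real work is exhibiting a single fixed point. The plan is to construct one explicitly from a pair of points realizing the gap between $A$ and $B$, using \cref{cor: fixed point and its gap} (equivalently \cref{lem:identities}\eqref{eq:3}) as the template: a fixed point should have the form $\bbar-\tfrac{\lambda}{1-\lambda}\paren{\bbar-\abar}$ where $\abar=P_A(\bbar)$ and $\bbar$ is the $B$-shadow.

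First I would produce a best-approximation pair. Since $A$ is bounded and closed it is compact, and $B$ is nonempty and closed, so $a\mapsto \dist(a,B)$ is continuous and attains its minimum over $A$ at some $\abar\in A$; set $\bbar\equiv P_B(\abar)$, which is single-valued because $B$ is convex. The pair $\paren{\abar,\bbar}$ then jointly minimizes $\norm{a-b}$ over $A\times B$: if some $a'\in A$ satisfied $\norm{a'-\bbar}<\norm{\abar-\bbar}$, then $\dist(a',B)<\dist(\abar,B)$, contradicting the minimality of $\abar$; hence $\abar=P_A(\bbar)$ (unique, as $A$ is convex). Writing $g\equiv\bbar-\abar$ for the gap vector, the projection characterizations for convex sets give $g\in\ncone{A}(\abar)$ and $-g\in\ncone{B}(\bbar)$.

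Next I would verify directly that $\xbar\equiv\bbar-\tfrac{\lambda}{1-\lambda}g$ is a fixed point, working from the identity $\raar=\lambda\DR+(1-\lambda)P_B$ (which follows from \eqref{raar} since $2P_B(x)-x=R_B(x)$). Because $-g\in\ncone{B}(\bbar)$ and $\tfrac{\lambda}{1-\lambda}>0$, convexity of $B$ gives $P_B(\xbar)=\bbar$; consequently $R_B(\xbar)=2\bbar-\xbar=\abar+\tfrac{1}{1-\lambda}g$, and since $g\in\ncone{A}(\abar)$ with $\tfrac{1}{1-\lambda}>0$, convexity of $A$ gives $P_A\paren{R_B(\xbar)}=\abar$. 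Substituting these into $\DR(\xbar)=\xbar+P_A\paren{R_B(\xbar)}-P_B(\xbar)$ from \cref{lem:identities}\eqref{eq:2}, and then into $\raar$, a one-line computation collapses to $\raar(\xbar)=\xbar$. This establishes $\Fix\raar\neq\emptyset$.

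Finally, for the identity $\Fix\raar=\Mcal$: since $A$ and $B$ are convex, $P_A$ and $P_B$ are single-valued on all of $\Euclid$, so $\raar$ is single-valued on the open set $U=\Euclid$ and \cref{thm:fixed points} applies, giving $\Fix\raar\subseteq\Mcal$. By \cref{r:Fix char}\eqref{r:Fix char i}, for convex $A$ the tightness condition $e\in P_A\paren{f+\tfrac{\lambda}{1-\lambda}(f-e)}$ holds automatically, because that argument lies in $e+\ncone{A}(e)$ and hence projects back to $e$; therefore the inclusion is in fact an equality. The only genuinely delicate point in the whole argument is the normal-cone bookkeeping in the third paragraph, namely getting the direction of $g$ right so that $\xbar$ and $R_B(\xbar)$ lie on the correct outward rays and project back to $\bbar$ and $\abar$ respectively; everything else is either compactness or routine algebra.
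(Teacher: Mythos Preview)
Your proof is correct and follows essentially the same architecture as the paper's: produce a pair $(\abar,\bbar)$ with $\abar=P_A(\bbar)$ and $\bbar=P_B(\abar)$, verify by direct computation that $\xbar=\bbar-\tfrac{\lambda}{1-\lambda}(\bbar-\abar)$ satisfies $\raar(\xbar)=\xbar$, and then invoke \cref{thm:fixed points} together with the convexity tightness condition of \cref{r:Fix char}\eqref{r:Fix char i} to obtain $\Fix\raar=\Mcal$.

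The one genuine difference is in how the best-approximation pair is obtained. The paper appeals to Browder's fixed point theorem applied to the nonexpansive map $P_AP_B$ on a compact set, whereas you use the Weierstrass extreme value theorem directly: minimize $a\mapsto\dist(a,B)$ over the compact set $A$ and argue that the minimizer together with its $B$-projection gives the pair. Your route is more elementary and self-contained for this particular statement; the paper's route has the advantage of plugging into an existing named result. Both yield exactly the same object (a fixed point of $P_AP_B$), so the remainder of the argument is identical in content. One small stylistic point: when you cite \cref{lem:identities}\eqref{eq:2} for the identity $\DR(x)=x+P_A(R_B(x))-P_B(x)$, note that the lemma is stated under the hypothesis $x\in\Fix\raar$, which you are in the process of establishing; the identity itself, however, holds at any point where $P_B$ is single-valued (as its proof shows), so in the convex setting you may simply assert it directly.
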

\begin{proof}
    The proof follows the pattern of proof in \cite[Lemma 2.1]{Luke2008} which establishes existence of fixed points for 
    $\raar$ by first showing the existence of fixed points of the alternating projections mapping $T\equiv P_AP_B$.  
    To see this, note that $T$ is nonexpansive since the projectors $P_A$ and $P_B$ are nonexpansive, and 
	the composition of nonexpansive mappings is nonexpansive by a similar argument as made in
	\cref{eg:1}\eqref{eg:1i}. Note that $U=\Euclid$. Existence of fixed points of $T$ is then an easy consequence of 
	\cite[Theorem 2]{Browder66}, which requires that one of the sets, $A$ or $B$ be compact.  
	
	To show the tight characterization of $\Fix T$, let $e\in \Fix T$.  Then 
	$P_B(e)=f$ and $P_A(f)=e$ and $\raar$ is , by convexity, single-valued.  By \cref{r:Fix char}\eqref{r:Fix char i} we 
	have  $f+\tfrac{\lambda}{1-\lambda}(f-e) = e + \tfrac{1}{1-\lambda}(f-e)$ and 
	$P_A\paren{e + \tfrac{1}{1-\lambda}(f-e)}=e$ for all $\lambda\in (0,1)$.  Moreover, for all 
    such $\lambda$ we have $f= P_B\paren{f-\tfrac{\lambda}{1-\lambda}\paren{f-e}}$. Together, for $\xbar=f-\frac{\lambda}{1-\lambda}(f-e)$, this yields
    \begin{align*}
    	\raar(\xbar)
    	=&\frac{\lambda}{2}\paren{R_A(R_B(\xbar))+\xbar}+(1-\lambda)P_B(\xbar)\\
    	=&\frac{\lambda}{2}\paren{R_A(2f-\xbar)+\xbar}+(1-\lambda)f\\
    	=&\frac{\lambda}{2}\paren{2P_A(e+\frac{1}{1-\lambda}(f-e))-2f+\xbar+\xbar}+(1-\lambda)f\\
    	=&\frac{\lambda}{2}\paren{2e-2f+2\xbar}+(1-\lambda)f\\
    	=&\lambda\paren{f-\frac{\lambda}{1-\lambda} (f-e)}+(1-\lambda)f\\
    	=&f-\lambda\frac{\lambda}{1-\lambda} (f-e)=\xbar.
    \end{align*}
    Now, applying \cref{thm:fixed points} immediately yields $\Fix\raar=\Mcal$ where $\Mcal$ is given by \eqref{eq:fixed points}.  
	This completes the proof. 
\end{proof}
The above result on existence relies heavily on convexity.   The next example shows that it is quite easy to construct 
a scenario where $\raar$ has no fixed points.  
\begin{eg}[empty fixed point set]\label{ex:circle point}
	Let $A$ be the unit circle in $\Rbb^2$, i.e. 
	\begin{align*}
		A\equiv \set{x=(x_1,x_2) \in \Rbb^2}{x_1^2+x_2^2=1},
	\end{align*}
	and	$B$ its origin, i.e. $B\equiv \klam{\paren{0,0}}$. In this setting the fixed point set of $\raar$ is 
	empty for all $\lambda \in (0,1)$.
	To prove this we will show by a case distinction that the fixed point set of $\raar$ is empty.
	
	First, note that the projectors and reflectors involved in $\raar$
	are given by
	\begin{align*}
		P_B(x)=&\paren{0,0} \quad \forall x \in \Rbb^2\\
		P_A(x)=&\begin{cases}
			\frac{x}{\norm{x}}\quad &\forall x \in \Rbb^2\setminus\paren{0,0},\\  
			A \quad &\text{for } x=\paren{0,0}.
			\end{cases}
	\end{align*}
	Now, let $x=(0,0)$. Then
	\begin{align*}
		\raar(x)=\frac{\lambda}{2}\paren{R_AR_B(x)+x}+(1-\lambda)P_B(x)=\frac{\lambda}{2}\paren{R_A(x)}=\lambda A.
	\end{align*}
	Thus, $x=(0,0)$ cannot be a fixed point of $\raar$. For the other case let $x\neq(0,0)$. Then
	\begin{align*}
		\raar(x)=\frac{\lambda}{2}\paren{R_AR_B(x)+x}+(1-\lambda)P_B(x)=\frac{\lambda}{2}\paren{R_A(-x)+x}=\lambda \paren{x-\frac{x}{\norm{x}}}.
	\end{align*}
	If $x$ is a fixed point of $\raar$, that is $x=\raar(x)$, the following has to hold
	\begin{align*}
		x=\lambda \paren{x-\frac{x}{\norm{x}}},
	\end{align*}
	which is equivalent to
		\begin{align*}
		\frac{1-\lambda}{\lambda}x=-\frac{x}{\norm{x}}.
	\end{align*}
	But this is only satisfied when $x=(0,0)$, a contradiction. From which we conclude 
	that $x\notin \Fix \raar$, and therefore $\Fix \raar=\emptyset$.
	\begin{figure}[ht]
		\centering
		\begin{tikzpicture}
			\draw[thick] (0,0) circle (2cm);
			\draw (1.8,1.8) node {\Large $A$};
			\fill (0,0) circle (0.05) node[above right] {\Large $B$};
			\fill (0,1) circle (0.05) node[above right] {$x$};
			\fill (0,-1) circle (0.05) node[below left] {$R_B(x)$};
			\fill (0,-0.8) circle (0.05) node[above right] {$T_{\mathrm{DR}\lambda}(x)$};
			\fill (0,-2) circle (0.05) node[below] {$P_A\left(R_B(x)\right)$};
		\end{tikzpicture}
		\caption{\cref{ex:circle point} for a point $x\in \Rbb^2$ and $\lambda=0.8$.}\label{f:ex1}
	\end{figure}
\end{eg}
The following proposition provides a comparison of the fixed points for $\raar$ for different values of $\lambda$.
\begin{propn}[monotonicity of $\Fix \raar$ with respect to $\lambda$]
	Let $A$ and $B$ be both closed subsets of $\Euclid$, and $\lambda_1, \lambda_2\in (0,1)$ such that
	$\lambda_1\leq \lambda_2$ and $\Fix T_{\mathrm{DR}\lambda_2}\neq \emptyset$. 
	Moreover, let $T_{\mathrm{DR}\lambda_2}$ be single-valued at its fixed points. Then
	\begin{align}\label{eq:lam1lam2}
		P_B(\Fix T_{\mathrm{DR}\lambda_2})\subseteq P_B(\Fix T_{\mathrm{DR}\lambda_1}).
	\end{align}
	If \eqref{eq:fixed points} holds for $\lambda_2$ with equality instead of just set inclusion, then \eqref{eq:lam1lam2}
    holds with equality.
\end{propn}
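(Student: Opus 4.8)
The plan is to reduce everything to the explicit description of the fixed points in \cref{thm:fixed points} and \cref{cor: fixed point and its gap}, combined with one elementary fact about projections: \emph{if $p\in P_C(z)$ then $p\in P_C\paren{p+s(z-p)}$ for every $s\in[0,1]$}, i.e.\ a projection onto a closed set is preserved as the argument is contracted toward that projection. (One line: for $w\equiv p+s(z-p)$ and any $c\in C$ we have $\norm{w-c}\geq \norm{z-c}-(1-s)\norm{z-p}\geq s\norm{z-p}=\norm{w-p}$, using $\norm{z-c}\geq\norm{z-p}$.) Throughout I write $t_i\equiv \lambda_i/(1-\lambda_i)$ and note that $\lambda_1\leq\lambda_2$ is equivalent to $t_1\leq t_2$.

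For the inclusion \eqref{eq:lam1lam2}, let $x_2\in\Fix T_{\mathrm{DR}\lambda_2}$. By single-valuedness and \cref{lem:identities} the points $f\equiv P_B(x_2)$ and $e\equiv P_A(f)$ are well defined, and by \cref{cor: fixed point and its gap} we have $x_2=f-t_2(f-e)$. Since $x_2$ is a fixed point, the equivalence established inside the proof of \cref{thm:fixed points} supplies the two conditions $f\in P_B\paren{f-t_2(f-e)}$ (automatic, $f=P_B(x_2)$) and $e\in P_A\paren{f+t_2(f-e)}$ (because $0\in x_2-T_{\mathrm{DR}\lambda_2}x_2$ iff $e\in P_A(R_B(x_2))$ and $R_B(x_2)=f+t_2(f-e)$). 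I then set $x_1\equiv f-t_1(f-e)$. Because $t_1\leq t_2$, we can write $x_1=f+\tfrac{t_1}{t_2}\paren{x_2-f}$ with $\tfrac{t_1}{t_2}\in(0,1]$, so $x_1$ lies on the segment $[f,x_2]$; the projection fact with $C=B$, $z=x_2$, $p=f$ gives $f\in P_B(x_1)$. Likewise, using the identity $f+t_i(f-e)=e+\tfrac{1}{1-\lambda_i}(f-e)$ from \cref{r:Fix char}, the $\lambda_1$-point $e+\tfrac{1}{1-\lambda_1}(f-e)$ lies on the segment between $e$ and $e+\tfrac{1}{1-\lambda_2}(f-e)$, so the same fact with $C=A$, $p=e$ yields $e\in P_A\paren{f+t_1(f-e)}$. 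With $e\in P_A(f)$ and these two conditions, the equivalence in \cref{thm:fixed points} shows $x_1\in\Fix T_{\mathrm{DR}\lambda_1}$, and $f\in P_B(x_1)$. Hence $f=P_B(x_2)\in P_B\paren{\Fix T_{\mathrm{DR}\lambda_1}}$, which is \eqref{eq:lam1lam2}.

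For the equality statement I would try to run the same construction backwards: given $x_1\in\Fix T_{\mathrm{DR}\lambda_1}$ with $f=P_B(x_1)$, $e=P_A(f)$, form the candidate $x_2\equiv f-t_2(f-e)$. The role of the equality hypothesis $\Fix T_{\mathrm{DR}\lambda_2}=\Mcal$ is exactly that membership in $\Mcal$ already forces the fixed-point property, so the $A$-direction (tightness) condition $e\in P_A\paren{f+t_2(f-e)}$ need not be verified separately; it therefore suffices to show $x_2\in\Mcal$, i.e.\ that $f\in P_B\paren{f-t_2(f-e)}$, since $e\in P_A(f)$ holds automatically.

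The hard part — and the step I expect to be the main obstacle — is precisely this remaining $B$-projection condition. In contrast to the forward inclusion, passing from $\lambda_1$ to the larger $\lambda_2$ pushes the argument $f-t(f-e)$ \emph{farther} from $f$ along the proximal normal direction $e-f\in\pncone{B}(f)$, which is exactly the direction in which the contraction fact gives no information: the nearest point of $B$ may switch away from $f$ before the parameter reaches $t_2$. Thus the reverse inclusion cannot rest on the segment argument alone, and the equality hypothesis (which concerns only the $A$-side tightness) does not by itself supply the missing persistence. The real work is to identify and justify the additional structure on $B$ — some reach or local best-approximation condition along the direction $f-e$ — guaranteeing that $f\in P_B\paren{f-t_2(f-e)}$ survives as $\lambda$ increases; without it the reverse inclusion, and hence the asserted equality, is the fragile point of the statement and is where I would concentrate the argument.
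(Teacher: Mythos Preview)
Your proof of the inclusion \eqref{eq:lam1lam2} is correct and essentially identical to the paper's: both arguments define the candidate $x_1=f-t_1(f-e)$, use the segment/contraction fact to obtain $f\in P_B(x_1)$ and $e\in P_A\paren{2f-x_1}$ from the corresponding $\lambda_2$-data, and then verify $x_1\in T_{\mathrm{DR}\lambda_1}(x_1)$ (you via the equivalence established in the proof of \cref{thm:fixed points}, the paper by an explicit evaluation of $T_{\mathrm{DR}\lambda_1}(\tilde x)$). The only cosmetic difference is that the paper phrases the contraction step as ``$\tilde x$ is a convex combination of $f$ and $x$'' rather than isolating your one-line inequality.

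On the equality statement your diagnosis is accurate: to run the construction backwards one needs $f\in P_B\paren{f-t_2(f-e)}$ from only $f\in P_B\paren{f-t_1(f-e)}$ with $t_1\leq t_2$, and the equality hypothesis on \eqref{eq:fixed points} (which governs the $A$-side tightness) does not supply this $B$-side persistence. It is worth noting that the paper's own proof stops after establishing the inclusion (``which proves the claim'') and gives no argument for the reverse containment either; so the obstacle you isolate is not an artifact of your approach but a genuine gap in the statement as written. Any complete argument would also need single-valuedness of $T_{\mathrm{DR}\lambda_1}$ at its fixed points (to extract $f$ and $e$ uniquely), which is not among the hypotheses.
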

\begin{proof}
	Let $x \in \Fix T_{\mathrm{DR}\lambda_2}\neq \emptyset$. Then, by \cref{cor: fixed point and its gap}, 
	we have the representation
	\begin{equation}
		x=P_B(x)-\frac{\lambda_2}{1-\lambda_2}\paren{P_B(x)-P_A(P_B(x))}.
	\end{equation}
	Consider $\xtilde\equiv P_B(x)-\frac{\lambda_1}{1-\lambda_1}\paren{P_B(x)-P_A(P_B(x))}$ and note, 
	as in the statements before, that $P_B(x)$ as well as $P_A(P_B(x))$ are single-valued, since 
	$x$ is a fixed point of $T_{\mathrm{DR}\lambda_2}\neq \emptyset$. Set $f\equiv P_B(x)$. Then
	$f\in B$ and $P_B(\xtilde)=f$. To see this, note that 
	$\frac{\lambda_2}{1-\lambda_2}\paren{P_A(f)-f}\in \pncone{B}(f)$. Since 
	$0\leq \tfrac{\lambda_1}{1-\lambda_1}\leq \tfrac{\lambda_2}{1-\lambda_2}$, $\tilde{x}$ is a convex combination of $f$ and $x$,
	from which we conclude that $P_B(\xtilde)=f$. Moreover, since $P_B(x)=f=P_B(\xtilde)$,
	we can conclude that $\xtilde \in \Fix T_{\mathrm{DR}\lambda_1} $. To see this, evaluate $T_{\mathrm{DR}\lambda_1}(\xtilde)$
	\begin{align*}
		T_{\mathrm{DR}\lambda_1}(\xtilde)=&\set{y}{y \in \lambda_1\paren{P_A(R_B(\xtilde))+\xtilde}+\paren{1-2\lambda_1}P_B(\xtilde)}\\
		=&\set{y}{y \in  \lambda_1\paren{P_A(2f-\xtilde)+\xtilde}+\paren{1-2\lambda_1}f},
	\end{align*}
	since $P_B(\xtilde)=\klam{f}$. $2f-\xtilde=2f-\paren{f-\tfrac{\lambda_1}{1-\lambda_1}\paren{f-P_A(f)}}$, where $P_A(f)$
	is single-valued since $x$ is a fixed point of $T_{\mathrm{DR}\lambda_2}$. This yields
	\begin{align*}
		2f-\xtilde=&f+\tfrac{\lambda_1}{1-\lambda_1}\paren{f-P_A(f)}\\
		=&P_A(f)+\tfrac{1}{1-\lambda_1}\paren{f-P_A(f)}.
	\end{align*}
	Analog to what we have seen before, we can argue that 
	$P_A(f)\in P_A(2f-\xtilde)$, 
	since $0\leq \tfrac{\lambda_1}{1-\lambda_1}\leq \tfrac{\lambda_2}{1-\lambda_2}$ and $P_A(f)=P_A(2f-x)=P_A(f+\tfrac{\lambda_2}{1-\lambda_2}\paren{f-P_A(f)})$.
	This implies that
	\begin{align*}
		&&\lambda_1\paren{P_A(f)+\xtilde}+\paren{1-2\lambda_1}f\in T_{\mathrm{DR}\lambda_1}\paren{\xtilde}\\
		&&\Longleftrightarrow\qquad\qquad\qquad\\
		&&\lambda_1\paren{P_A(f)+f-\tfrac{\lambda_1}{1-\lambda_1}\paren{f-P_A(f)}}+\paren{1-2\lambda_1}f\in T_{\mathrm{DR}\lambda_1}\paren{\xtilde}\\
		&&\Longleftrightarrow\qquad\qquad\qquad\\
		&&f-\tfrac{\lambda_1}{1-\lambda_1}\paren{f-P_A(f)}\in T_{\mathrm{DR}\lambda_1}\paren{\xtilde}\\
		&&\Longleftrightarrow \qquad\qquad\qquad\\
		&&\xtilde\in T_{\mathrm{DR}\lambda_1}\paren{\xtilde},
	\end{align*}
	and therefore $\xtilde \in \Fix T_{\mathrm{DR}\lambda_1}$. In conclusion,
	\begin{align*}
		P_B(\Fix T_{\mathrm{DR}\lambda_2})\subseteq P_B(\Fix T_{\mathrm{DR}\lambda_1}),
	\end{align*}
	which proves the claim.
\end{proof}	


\section{Quantitative Convergence Analysis}\label{s: conv analysis}
We proceed now to the main goal of our study, the convergence analysis of the algorithm.   
Almost all of the key properties of the relaxed Douglas-Rachford fixed point mapping, $\raar$, 
have been established in Section \ref{s: charac of fixed points}.  
The main idea for convergence goes back to Opial, \cite{Opial1967}. In our setting
nonemptiness of the fixed point set and averagedness of the mapping can be identified
as the essential properties yielding convergence of the iterative sequence.
It was shown in \cite{LukTebTha18}, however, 
that {\em gauge metric subregularity} of a fixed point mapping at its fixed points is a necessary 
condition for quantifiable (by said gauge) rates of convergence of the fixed point iteration.  
Metric subregularity is still missing from our development, and the 
main work of this section consists of deriving the conditions on the sets $A$ and $B$
under which (linear) metric subregularity holds.  

\begin{defn}[metric subregularity on a set]\label{def:2.17}
	Let $\Euclid$ and $\Ycal$ be Euclidean spaces, let $\mmap{\Phi}{\Euclid}{\Ycal}$,  and let $U \subset \Euclid, \ \ybar \in \Ycal$. 
   	The mapping $\Phi$ is called 
	\emph{metrically subregular for $\bar{y}$ on $U$ with constant $\kappa$ relative to $\Lambda\subset \Euclid$} 
	if
	\begin{align}\label{eq:45}
		\dist\paren{x, \Phi^{-1}(\ybar)\cap \Lambda}\leq \kappa \dist\paren{\ybar, \Phi(x)}
	\end{align}
	holds for all $x \in U \cap \Lambda$.  When $\ybar\in \Phi(\xbar)$, then  $\Phi$ is said to be 
	{\em metrically regular at $\xbar$ for $\ybar$} relative to $\Lambda$ when there exists a neighborhood $U$ of $\xbar$ and a constant $\kappa$ 
	such that \eqref{eq:45} holds for all $x \in U \cap \Lambda$.
		
	When $\Lambda=\Euclid $, the quantifier ``relative to''
	is dropped. The smallest constant $\kappa$ for which (\ref{eq:45}) holds is called 
	\emph{modulus} of metric subregularity.   
\end{defn}

The abstract result that allows us to quantify the convergence of $\raar$ follows.  
It is a simplified version of    
\cite[Corollary 2.3]{LukNguTam17} which
was later refined to show convergence to a specific point in \cite[Corollary 1]{LukTebTha18}.
The convergence result \cref{thm:2.18} is later specialized 
to $\raar$ in \cref{thm:conv raar} presented in \cref{s:lin conv of raar}.
\begin{thm}[(sub)linear convergence with metric subregularity]\label{thm:2.18}
	Let $\mmap T \Lambda \Lambda$ for $\Lambda\subset \Euclid$, with $\Fix T$ nonempty and closed, $\Phi\equiv T-\Id$.
	Denote $\paren{\Fix T +\delta \Ball}\cap \Lambda $ by $S_\delta$ for a nonnegative real $\delta$.		
	Suppose that, for all $\bar{\delta}>0$ small enough, there is a $\gamma\in (0,1)$, a nonnegative  
	scalar $\epsilon$ and a positive constant $\alpha$ bounded above 
	by $1$, such that,
	\begin{enumerate}[(i)]
		\item\label{ass:a} $T$ is pointwise almost averaged at all $y \in \Fix T \cap \Lambda$ with averaging constant 
		$\alpha$ and violation $\epsilon$ on $S_{\gamma\bar{\delta}}$, and
		\item\label{ass:b} for 
			\begin{align*}
				\bar{S}\equiv S_{\gamma\bar{\delta}}\setminus{\Fix T},
			\end{align*}
			$\Phi$ is metrically subregular for $0$ on $\bar{S}$ with constant $\kappa$ 
			relative to $\Lambda$. 
	  \end{enumerate}	 
	Then for any $x^0 \in \Lambda$ close enough to $\Fix T \cap \Lambda$, the iterates $x^{j+1}\in Tx^j$ satisfy
	\begin{align}\label{e:iterates}
		\dist\paren{x^{j+1}, \Fix T\cap \Lambda}\leq c\dist\paren{x^j, \Fix T \cap \Lambda}\quad \forall x^j \in \bar{S}
	\end{align}
        where $c\equiv \sqrt{1+\epsilon-\paren{\frac{1-\alpha}{\kappa^2\alpha}}}$.  If, in addition $\kappa$ satisfies
			\begin{align}\label{eq:kappa-epsilon}
				\kappa<\sqrt{\frac{1-\alpha}{\epsilon\alpha}}.
			\end{align}
	then, $\dist\paren{x^j, \tilde{x}}\rightarrow 0$ for some $\tilde{x}\in \Fix T\cap \Lambda$
	at least R-linearly with rate at most $c<1$. If $\Fix T\cap \Lambda$ is a single point, then
	convergence is Q-linear.
\end{thm}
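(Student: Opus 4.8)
The plan is to combine the Fej\'er-type inequality coming from almost averagedness with the lower bound on the step length furnished by metric subregularity, and then to iterate the resulting one-step contraction. First I would record that $\Phi^{-1}(0)=\Fix T$, since $0\in\Phi(x)=T(x)-x$ says exactly $x\in T(x)$. Thus hypothesis \eqref{ass:b} reads, for every $x\in\bar S$,
\[
\dist\paren{x,\Fix T\cap\Lambda}\leq\kappa\,\dist\paren{0,\Phi(x)}=\kappa\,\dist\paren{0,T(x)-x}.
\]
Next I would fix an iterate $x^j\in\bar S$ and pick a point $y\in\Fix T\cap\Lambda$ that (nearly) attains $\dist\paren{x^j,\Fix T\cap\Lambda}$. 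Because $T$ is pointwise almost averaged at $y$ it is in particular pointwise almost nonexpansive there, so \cref{t:single-valued paa} forces $T(y)=\{y\}$; hence the only selection from $T(y)$ is $y$ itself.

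I would then apply the third (equivalent) characterization in \cref{prop: characterization of aa mappings} to the pair $x^j,y$ with selections $x^{j+1}\in T(x^j)$ and $y\in T(y)$, which, since $y-y=0$, gives
\[
\norm{x^{j+1}-y}^2\leq(1+\epsilon)\norm{x^j-y}^2-\frac{1-\alpha}{\alpha}\norm{x^j-x^{j+1}}^2.
\]
Because $x^{j+1}-x^j\in\Phi(x^j)$ we have $\norm{x^j-x^{j+1}}\geq\dist\paren{0,\Phi(x^j)}$, and combining this with the subregularity bound yields $\norm{x^j-x^{j+1}}^2\geq\kappa^{-2}\dist\paren{x^j,\Fix T\cap\Lambda}^2$. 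Substituting, and using $\norm{x^j-y}=\dist\paren{x^j,\Fix T\cap\Lambda}$ together with $\dist\paren{x^{j+1},\Fix T\cap\Lambda}\leq\norm{x^{j+1}-y}$, produces
\[
\dist\paren{x^{j+1},\Fix T\cap\Lambda}^2\leq\paren{1+\epsilon-\frac{1-\alpha}{\kappa^2\alpha}}\dist\paren{x^j,\Fix T\cap\Lambda}^2=c^2\,\dist\paren{x^j,\Fix T\cap\Lambda}^2,
\]
which is exactly \eqref{e:iterates} after taking square roots. (If $x^j\in\Fix T$ the iteration is stationary by single-valuedness and the estimate holds trivially; otherwise $x^j\in\bar S$ and the argument above applies.)

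To pass from the one-step estimate to convergence, I would first note that \eqref{eq:kappa-epsilon} is precisely the condition $c<1$. Fixing $\bar\delta$ small enough and choosing $x^0\in\Lambda$ with $\dist\paren{x^0,\Fix T\cap\Lambda}<\gamma\bar\delta$, an induction keeps every iterate inside $S_{\gamma\bar\delta}$: the decrease $\dist\paren{x^{j+1},\Fix T\cap\Lambda}\leq c\,\dist\paren{x^j,\Fix T\cap\Lambda}$ guarantees $\dist\paren{x^{j+1},\Fix T}\leq\dist\paren{x^{j+1},\Fix T\cap\Lambda}<\gamma\bar\delta$, while $x^{j+1}\in\Lambda$ because $T$ maps $\Lambda$ into $\Lambda$, so hypotheses \eqref{ass:a}--\eqref{ass:b} remain applicable at each step. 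Hence $\dist\paren{x^j,\Fix T\cap\Lambda}\leq c^{j}\dist\paren{x^0,\Fix T\cap\Lambda}\to0$. Dropping the negative term in the Fej\'er inequality gives $\norm{x^{j+1}-y}\leq\sqrt{1+\epsilon}\,\dist\paren{x^j,\Fix T\cap\Lambda}$, whence $\norm{x^{j+1}-x^j}\leq\paren{1+\sqrt{1+\epsilon}}\dist\paren{x^j,\Fix T\cap\Lambda}$ decays geometrically and is summable; therefore $\{x^j\}$ is Cauchy and converges to some $\tilde x$, which lies in $\Fix T\cap\Lambda$ by closedness since $\dist\paren{x^j,\Fix T\cap\Lambda}\to0$. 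Summing the tail gives $\norm{x^j-\tilde x}=O(c^{j})$, i.e. R-linear convergence with rate at most $c$; and when $\Fix T\cap\Lambda=\{\tilde x\}$ is a singleton, $\dist\paren{x^j,\Fix T\cap\Lambda}=\norm{x^j-\tilde x}$, so the one-step estimate is itself the statement of Q-linear convergence.

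The algebraic merging of the two inequalities is routine; the genuinely delicate part is the bookkeeping that keeps the iterates in the region $S_{\gamma\bar\delta}$ where both hypotheses hold, namely the forward-invariance induction tied to the ``close enough'' requirement on $x^0$, together with the minor technical care needed to realize $\dist\paren{x^j,\Fix T\cap\Lambda}$ by an admissible $y\in\Fix T\cap\Lambda$ so that almost averagedness may legitimately be invoked at $y$.
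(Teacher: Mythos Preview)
The paper does not supply its own proof of \cref{thm:2.18}; it is stated as ``a simplified version of \cite[Corollary 2.3]{LukNguTam17} which was later refined to show convergence to a specific point in \cite[Corollary 1]{LukTebTha18}'' and is invoked as a black box. Your argument is correct and is precisely the standard proof one finds in those references: combine the pointwise almost averaged inequality from \cref{prop: characterization of aa mappings}(iii) at a nearest fixed point $y$ with the metric subregularity lower bound on the step to obtain the one-step contraction, then iterate and sum the geometrically decaying step lengths to extract a limit. So there is no discrepancy in approach to report; you have reconstructed what the cited sources do.

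One small point worth tightening: you write $\norm{x^j-y}=\dist\paren{x^j,\Fix T\cap\Lambda}$ assuming the distance is attained, while the hypotheses only give $\Fix T$ closed, not $\Fix T\cap\Lambda$. You already flag this at the end; the usual fix is either to take $y$ $\eta$-approximate and let $\eta\downarrow 0$, or to note that in the applications $\Lambda$ is itself closed (an affine set) so $\Fix T\cap\Lambda$ is closed and the projection exists.
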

We have already shown in \cref{thm: singlevalued at fixed points} that $\raar$ is almost averaged, 
{\em with any desired violation constant $\epsilon>0$}, 
at its fixed points on certain neighborhoods when $A$ and $B$ are super-regular at a distance. 
To achieve local linear convergence, inequality  \eqref{eq:kappa-epsilon} must hold, and this is where 
uniformity of almost averagedness with respect to $\epsilon$ is crucial:  as long as the mapping $\raar-\Id$, or 
a related mapping (see the discussion below), can be shown to  be relatively metrically subregular on 
a neighborhood  of $\Fix \raar$ - {\em regardless of the value of the modulus $\kappa$} - then suitable neighborhoods
can be found in the context of \cref{thm: singlevalued at fixed points} where the violation, 
$\epsilon$, is small enough that \eqref{eq:kappa-epsilon} is satisfied, and hence local linear convergence is 
guaranteed.  

The main work 
before us (Section \ref{s:raar at fix pts}) is to show metric subregularity of the appropriate mapping at points in the 
product space corresponding to fixed points of $\raar$. There are a number of ways to go about this, but 
all successful strategies we found are based on a characterization of the iterates on neighborhoods of fixed points 
lifted to a product space
where the tools are applied.  We were unable to provide a direct approach, involving the $\raar$ mapping itself, 
that guarantees metric subregularity from properties of the regularity of the sets $A$ and $B$ both individually (e.g. relative 
super-regularity at a distance) or as a collection (e.g. {\em subtransversality} discussed below).   
The characterization of 
the fixed points in \cref{thm:fixed points} allows us to build auxiliary {\em phantom} sets that are used in the 
analysis.  To adapt the framework above to the present setting 
we build a product space which represents not only the iterates of $\raar$ but also a 
cyclic projection between the phantom sets. In particular, we will define an operator in the product space
$\Euclid^4$ whose first entry is generated by applying $\raar$. The remaining three entries are generated 
by projecting the prior entry onto the sets $A$ and $B$ as well as phantom versions of these sets shifted
by a scaling of the local gap vector between $A$ and $B$ at the reference fixed point.

\subsection{\texorpdfstring{$\raar$}{T {DRlambda}} at \texorpdfstring{$\Fix \raar$}{Fix T {DRlambda}}: Metric Subregularity via Subtransversality}\label{s:raar at fix pts}

Direct verification of metric subregularity is notoriously difficult and verifying this for $\raar$ is no different.  
In principle, one must show that the coderivative (the generalized Jacobian) of the (multi-valued) $\raar$ mapping is 
injective on neighborhoods of $\Fix\raar$ \cite[Theorems~4B.1 and 4C.2]{DontchevRockafellar14}.  We were
unable to compute the coderivative of the $\raar$ mapping, let alone determine whether this is injective.  

Since our mapping is based on {\em projectors} to sets, another route is available for showing metric subregularity
which uses characterizations of the regularity of sets {\em in relation to one another}. In the context 
of consistent set feasibility, metric subregularity of a particular set-valued mapping on the product space has been shown to be  
equivalent to {\em subtransversality} of the {\em collection of sets} \cite{Kruger2018}.
\footnote{The terminology for this property in the literature is in disarray, and there are often several names 
with snappy prefixes for the same notion.}  This was expanded in \cite[Definition 3.2]{LukNguTam17} to 
account for {\em inconsistent set feasibility}.  Based on this more general notion of subtransversality of 
non-overlapping sets Luke et al.~were able to show that the cyclic projections mapping, 
$T_{CP}\equiv P_{\Omega_1}P_{\Omega_2}\cdots P_{\Omega_m}$ is metrically subregular 
when the collection of sets $\{\Omega_1, \dots, \Omega_m\}$ is subtransversal, and an additional 
technical assumption is satisfied.
We follow this approach here, but for the mapping $\raar$. Note that this general definition 
can simplify in special cases such as intersecting sets as is discussed in \cref{prop:lin reg}.
\begin{defn}[subtransversal collection of sets]\label{def:3.6}
	Let $\klam{\Omega_1, \Omega_2, \dots, \Omega_m}$ be a collection of nonempty closed subsets of $\Euclid$ and 
	define $\mmap \Psi {\Euclid^m} {\Euclid^m} $ by $\Psi(x)\equiv P_\Omega\paren{\Pi x }-\Pi x$ where 
	$\Omega\equiv \Omega_1\times \Omega_2\times \dots \times \Omega_m$, the projection $P_\Omega$ is with 
	respect to the Euclidean norm on $\Euclid^m$ and 
	$\Pi: x=\paren{x_1,x_2, \dots, x_m}\mapsto\paren{x_2, x_3,\dots, x_m,x_1}$ is
	the permutation mapping on the product space $\Euclid^m$ for $x_j \in \Euclid\ \paren{j=1,2, \dots, m}$. Let
	$\xbar=\paren{\xbar_1, \xbar_2, \dots, \xbar_m}\in \Euclid^m$ and $\ybar \in \Psi(\xbar)$.
 	The collection of sets is said to be \emph{subtransversal with constant $\kappa$ relative to 
	$\Lambda\subset \Euclid^m$ at $\xbar$ for $\ybar$} if $\Psi$ is metrically subregular  
	for $\ybar$ on some neighborhood $U$ of $\xbar$  
	with constant $\kappa$ relative to $\Lambda$.
\end{defn}	
In contrast to the original model setting, where $\klam{\Omega_1, \Omega_2, \dots, \Omega_m}$ is a 
collection of subsets on $\Euclid$, 
our definition of subtransversality is formulated on the product space $\Euclid^m$ where
$\Omega_1\times \Omega_2\times \cdots \times \Omega_m$ in $\Euclid^m$.   
\begin{lem}[subtransversality under addition]\label{lem: addition perserves subtransversality}
	Let $\klam{\Omega_1, \Omega_2 \dots, \Omega_m}\subset \Euclid$ be a subtransversal collection of sets 
	at a point $\xbar=(\xbar_1,\xbar_2,\dots,\xbar_m)$ for $\ybar\in \Psi(\xbar)$ relative to $\Lambda\subset \Euclid^m$ 
	with modulus $\kappa$. Then the collection 
	\[\klam{\Omega_1,\Omega_2 \dots, \Omega_m, \Omega_1-g, \Omega_2-g, \dots, \Omega_m-g }\subset \Euclid\]
	for some $g \in \Euclid$, is subtransversal at 
	\begin{align}\label{eq: xtilde}
		\tilde{x}=\paren{\xbar_1-g,\xbar_2, \xbar_3, \dots, \xbar_m,\xbar_1,\xbar_2-g,\xbar_3-g, \dots,\xbar_m-g}\in \Euclid^{2m}
	\end{align}
	 for 
	\[\tilde{y}=\paren{\ybar,\ybar}=\paren{\ybar_1,\ybar_2,\dots,\ybar_m,\ybar_1,\ybar_2,\dots,\ybar_m}\in \Euclid^{2m}\] 
	relative to
	\[\tilde{\Lambda}=\set{z\in \Euclid^{2m}}{\paren{z_{m+1},z_2,z_3,\dots, z_m}\in \Lambda, ~\paren{z_1,z_{m+2},z_{m+3},\dots, z_{2m}}\in \Lambda-\paren{g,g,\dots,g}}\]
	with modulus $\kappa$. 
\end{lem}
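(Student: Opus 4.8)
The plan is to recognize the enlarged $2m$-set problem as two decoupled copies of the original $m$-set problem, related by an affine isometry of $\Euclid^{2m}$, and then to invoke the fact that metric subregularity (hence subtransversality in the sense of \cref{def:3.6}) is inherited both under isometric changes of variable and under Cartesian products, in each case with the \emph{same} constant. So the whole statement reduces to two elementary, constant-preserving lemmas once the correct isometry is identified.

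First I would write down, via \cref{def:3.6}, the mapping $\tilde\Psi$ and the cyclic permutation $\tilde\Pi$ on $\Euclid^{2m}$ associated with the collection $\klam{\Omega_1,\dots,\Omega_m,\Omega_1-g,\dots,\Omega_m-g}$. Reading off components, $\tilde\Psi(z)_k = P_{\Omega_k}(z_{k+1})-z_{k+1}$ for $k=1,\dots,m$ (the index $z_{m+1}$ appearing when $k=m$), while $\tilde\Psi(z)_{m+j}=P_{\Omega_j-g}(z_{m+j+1})-z_{m+j+1}$ for $j=1,\dots,m$ (with $z_1$ appearing when $j=m$). The algebraic key is the translation identity $P_{\Omega-g}(z)-z = P_\Omega(z+g)-(z+g)$, valid for every set $\Omega$, which rewrites the second block using the \emph{unshifted} projectors $P_{\Omega_j}$ evaluated at shifted arguments. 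The single big $2m$-cycle of $\tilde\Pi$ then visibly regroups into two $m$-cycles once the coordinates are rearranged appropriately.

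Next I would define the coordinate-regrouping-plus-translation $\iota\colon z\mapsto(u,w')$ with $u\equiv\paren{z_{m+1},z_2,\dots,z_m}$ and $w'\equiv\paren{z_1+g,\,z_{m+2}+g,\dots,z_{2m}+g}$. Because a permutation of coordinate blocks is orthogonal and a translation is an isometry, $\iota$ is an affine isometry of $\Euclid^{2m}$; moreover $u$ and $w'$ use disjoint coordinates of $z$, so $\iota$ is a genuine bijection. A component-by-component check—using the translation identity precisely on the $w'$-block—then shows $\tilde\Psi(z)=\paren{\Psi(u),\Psi(w')}$, i.e. $\tilde\Psi=(\Psi\times\Psi)\circ\iota$. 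In the same way one verifies $\tilde\Lambda=\iota^{-1}(\Lambda\times\Lambda)$, that $\iota(\tilde x)=(\xbar,\xbar)$, that $\tilde y=(\ybar,\ybar)$, and that $\tilde y\in\tilde\Psi(\tilde x)$ follows from $\ybar\in\Psi(\xbar)$.

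From here the subregularity estimate \eqref{eq:45} for $\tilde\Psi$ relative to $\tilde\Lambda$ transfers verbatim: since $\iota$ is an isometry, $\dist\paren{z,\iota^{-1}(S)}=\dist\paren{\iota(z),S}$ for every $S$, so inverse images, intersections with $\tilde\Lambda$, and distances all push through $\iota$ unchanged, reducing the claim to metric subregularity of the product map $\Psi\times\Psi$ relative to $\Lambda\times\Lambda$ at $(\xbar,\xbar)$ for $(\ybar,\ybar)$. That product estimate I would obtain from the single-factor hypothesis by the Pythagorean splitting of the Euclidean distance: writing $(x^1,x^2)=\iota(z)$, both $\dist\paren{(x^1,x^2),(\Psi^{-1}(\ybar)\cap\Lambda)^2}$ and $\dist\paren{(\ybar,\ybar),\Psi(x^1)\times\Psi(x^2)}$ split as sums of squares, so squaring the two single-factor inequalities $\dist\paren{x^i,\Psi^{-1}(\ybar)\cap\Lambda}\le\kappa\,\dist\paren{\ybar,\Psi(x^i)}$ and adding yields the product inequality with the identical constant $\kappa$. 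Taking the neighborhood $\tilde U\equiv\iota^{-1}(U\times U)$, where $U$ is the neighborhood of $\xbar$ furnished by the hypothesis, completes the argument. The only genuine work is the index bookkeeping in the middle step—matching the cyclic structure of $\tilde\Pi$ to the two extracted copies and confirming that the shift by $g$ is exactly absorbed by the translation identity; everything after that is the routine, modulus-preserving behaviour of metric subregularity under isometries and finite products.
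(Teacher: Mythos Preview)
Your argument is correct and rests on the same two ingredients as the paper's proof: the translation identity $P_{\Omega-g}(z)-z=P_\Omega(z+g)-(z+g)$ and the observation that the $2m$-component map decouples into two independent copies of $\Psi$, after which the Euclidean (Pythagorean) structure recombines the two single-factor subregularity estimates with the same constant $\kappa$. The paper carries this out by explicit coordinate computations for $m=2$ and leaves general $m$ to the reader, whereas you package the decoupling as the affine isometry $\iota$ and invoke the modulus-preserving behaviour of metric subregularity under isometries and finite products; this is cleaner, handles arbitrary $m$ uniformly, and yields a genuine open neighborhood $\tilde U=\iota^{-1}(U\times U)$, but it is the same proof at the level of ideas.
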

\begin{proof}
	We will show the result only for $m=2$ for reasons of simplicity and since one can easily enlarge 
	the number of sets used in the proof by the same pattern shown here. For $s\in \NN$ denote
	by $\Pi_{\Euclid}^s$ the permutation mapping on $\Euclid^s$.
	
	Let $U\subset \Euclid^2$ be a neighborhood of $\xbar\in \Euclid^2$ such that subtransversality 
	holds at $\xbar$ for $\ybar$ relative to $\Lambda$. Define $\Omega\equiv \Omega_1\times \Omega_2$ and therefore
	$\paren{\Omega_1-g}\times \paren{\Omega_2-g}=\Omega-\paren{g,g}$. Likewise set
	\[\tilde{U}\equiv \set{z\in \Euclid^4}{\paren{z_3,z_2}\in U, z_1=z_3-g, z_4=z_2-g}.\] 
	Thus every 
	$z \in \tilde{U}\cap \tilde{\Lambda}$ can be expressed as $\paren{x_1-g,x_2,x_1,x_2-g}^T$ for 
	some $\paren{x_1,x_2}\in U\cap \Lambda$.
	
	To show subtransversality of $\klam{\Omega_1,\Omega_2,\Omega_1-g,\Omega_2-g}$ we have to verify metric 
	subregularity of $\Psi=P_\Omega(\Pi^4_\Euclid )-\Pi^4_\Euclid $	
	for $\tilde{y}\in \Psi(\tilde{x})$ relative to $\tilde{\Lambda}$ on $\tilde{U}$, a neighborhood of $\tilde{x}$. 

	First, we show that $\tilde{y}\in \Psi(\tilde{x})$, i.e.  
	$\tilde{y}\in P_{\Omega\times {\Omega - (0,0,g, g)}}\paren{\Pi^4_\Euclid (\tilde{x})}-\Pi^4_\Euclid(\tilde{x})$.
	Let $\tilde{x}$ be defined by \eqref{eq: xtilde} then
	\begin{align}
		&P_{\Omega\times {\Omega - (0,0,g, g)}}\paren{\Pi^4_\Euclid (\tilde{x})}-\Pi^4_\Euclid(\tilde{x})\nonumber\\
		&\qquad\qquad=\paren{P_\Omega\paren{\tilde{x}_2,\tilde{x}_3}-\paren{\tilde{x}_2,\tilde{x}_3},P_{\Omega-\paren{g,g}}\paren{\tilde{x}_4,\tilde{x}_1}-\paren{\tilde{x}_4,\tilde{x}_1}}\nonumber\\
		&\qquad\qquad=\paren{P_\Omega\paren{\xbar_2,\xbar_1}-\paren{\xbar_2,\xbar_1},P_{\Omega-\paren{g,g}}\paren{\xbar_2-g,\xbar_1-g}-\paren{\xbar_2-g,\xbar_1-g}}\nonumber\\
		&\qquad\qquad=\paren{P_\Omega\paren{\xbar_2,\xbar_1}-\paren{\xbar_2,\xbar_1},P_{\Omega}\paren{\xbar_2,\xbar_1}-\paren{\xbar_2,\xbar_1}},\label{eq: 3rd step}
	\end{align}
	where the last equality holds since $P_{C-g}(x-g)=P_C(x)-g$ for any set $C$. Then \eqref{eq: 3rd step} yields
	\begin{align*}
		&P_{\Omega\times {\Omega - (0,0,g, g)}}\paren{\Pi^4_\Euclid (\tilde{x})}-\Pi^4_\Euclid(\tilde{x})\\
		&\qquad\qquad=\paren{P_\Omega\paren{\xbar_2,\xbar_1}-\paren{\xbar_2,\xbar_1},P_{\Omega}\paren{\xbar_2,\xbar_1}-\paren{\xbar_2,\xbar_1}}\\
		&\qquad\qquad	\ni\paren{\ybar,\ybar}=\tilde{y},
	\end{align*}
	since $\ybar \in P_\Omega(\Pi^2_\Euclid\xbar)-\Pi^2_\Euclid\xbar$ by the assumptions on 
	subtransversality of $\klam{\Omega_1,\Omega_2, \dots, \Omega_m}$. By $\tilde{x}\in \tilde{\Lambda}$
	this shows $\tilde{y}\in \Psi(\tilde{x})$ as claimed.
	
	It remains to prove that inequality \eqref{eq:45} holds for $\Psi$ and at $\tilde{x}$
	for $\tilde{y}\in \Psi(\tilde{x})$ relative to $\tilde{\Lambda}$ on $\tilde{U}$.
	For this, take a $\paren{x_1-g,x_2,x_1,x_2-g}^T\in \tilde{U}\cap \tilde{\Lambda}$, then:
	\scriptsize{
	\begin{align}
		&\kappa^2 \dist^2\paren{P_{\Omega\times {\Omega - (0,0,g, g)}}\paren{\Pi_\Euclid^4\paren{\begin{array}{c}x_1-g\\x_2\\x_1\\x_2-g\end{array}}}-\Pi_\Euclid^4\paren{\begin{array}{c}x_1-g\\x_2\\x_1\\x_2-g\end{array}},\paren{\begin{array}{c}\ybar_1\\\ybar_2\\\ybar_1\\\ybar_2\end{array}}}\\
		=&\kappa^2\paren{\dist^2\paren{P_\Omega\paren{\begin{array}{c}x_2\\x_1\end{array}}-\paren{\begin{array}{c}x_2\\x_1\end{array}},\paren{\begin{array}{c}\ybar_1\\\ybar_2\end{array}}}+\dist^2\paren{P_{\Omega-\paren{g,g}}\paren{\begin{array}{c}x_2-g\\x_1-g\end{array}}-\paren{\begin{array}{c}x_2-g\\x_1-g\end{array}},\paren{\begin{array}{c}\ybar_1\\\ybar_2\end{array}}}},\label{eq: e4 in e2}
	\end{align}
	}
	\normalsize
	by rewriting the distance on $\Euclid^4$ in terms of the distance on $\Euclid^2$. Using 
	again that $P_{C-g}(x-g)=P_C(x)-g$ for an arbitrary set $C$, 
	\eqref{eq: e4 in e2} ends up as
	\scriptsize{
	\begin{align}
		&\kappa^2\paren{\dist^2\paren{P_\Omega\paren{\Pi_\Euclid^2\paren{\begin{array}{c}x_1\\x_2\end{array}}}-\Pi_\Euclid^2\paren{\begin{array}{c}x_1\\x_2\end{array}},\paren{\begin{array}{c}\ybar_1\\\ybar_2\end{array}}}+\dist^2\paren{P_{\Omega}\paren{\begin{array}{c}x_2\\x_1\end{array}}-\paren{\begin{array}{c}x_2\\x_1\end{array}},\paren{\begin{array}{c}\ybar_1\\\ybar_2\end{array}}}}\nonumber\\
				=&2\kappa^2\paren{\dist^2\paren{P_\Omega\paren{\begin{array}{c}x_2\\x_1\end{array}}-\paren{\begin{array}{c}x_2\\x_1\end{array}},\paren{\begin{array}{c}\ybar_1\\\ybar_2\end{array}}}}\\
		\geq& 2\dist^2\paren{\paren{\begin{array}{c}x_1\\x_2\end{array}},\paren{P_\Omega\paren{\Pi_\Euclid^2 \paren{\cdot}}-\Pi_\Euclid^2\paren{\cdot}}^{-1}\paren{\begin{array}{c}\ybar_1\\\ybar_2\end{array}}},\label{eq: using subtrans}
	\end{align}
	}
	\normalsize
	where the last inequality holds by subtransversality of $\klam{\Omega_1,\Omega_2}$ at $(\xbar_1,\xbar_2)$ for $(\ybar_1,\ybar_2)$ relative to
	$\Lambda$ with modulus $\kappa $ on $U$. Rewriting \eqref{eq: using subtrans} in the distance on $\Euclid^4$ yields
	\scriptsize{
	\begin{align*}		&\kappa^2 \dist^2\paren{P_{\Omega\times {\Omega - (0,0,g, g)}}\paren{\Pi_\Euclid^4\paren{\begin{array}{c}x_1-g\\x_2\\x_1\\x_2-g\end{array}}}-\Pi_\Euclid^4\paren{\begin{array}{c}x_1-g\\x_2\\x_1\\x_2-g\end{array}},\paren{\begin{array}{c}\ybar_1\\\ybar_2\\\ybar_1\\\ybar_2\end{array}}}\nonumber \\
		\geq&\dist^2\paren{\paren{\begin{array}{c}x_1\\x_2\\x_1\\x_2\end{array}},\set{\paren{\begin{array}{c}z_1\\z_2\\z_3\\z_4\end{array}}}{P_\Omega\paren{\begin{array}{c}z_2\\z_1\end{array}}-\paren{\begin{array}{c}z_2\\z_1\end{array}}\ni \paren{\begin{array}{c}\ybar_1\\\ybar_2\end{array}},P_{\Omega-\paren{g,g}}\paren{\begin{array}{c}z_4\\z_3\end{array}}-\paren{\begin{array}{c}z_4\\z_3\end{array}}\ni \paren{\begin{array}{c}\ybar_1\\\ybar_2\end{array}}}}\\
		=&\dist^2\paren{\paren{\begin{array}{c}x_1\\x_2\\x_1\\x_2\end{array}},\set{\paren{\begin{array}{c}z_1\\z_2\\z_3\\z_4\end{array}}}{P_\Omega\paren{\begin{array}{c}z_2\\z_1\end{array}}-\paren{\begin{array}{c}z_2\\z_1\end{array}}\ni \paren{\begin{array}{c}\ybar_1\\\ybar_2\end{array}},P_{\Omega-\paren{g,g}}\paren{\begin{array}{c}z_4\\z_3\end{array}}-\paren{\begin{array}{c}z_4\\z_3\end{array}}\ni \paren{\begin{array}{c}\ybar_1\\\ybar_2\end{array}}}}\\
		=&\dist^2\paren{\paren{\begin{array}{c}x_1\\x_2\\x_1-g\\x_2-g\end{array}},\set{\paren{\begin{array}{c}z_1\\z_2\\z_3\\z_4\end{array}}}{P_{\Omega\times {\Omega - (0,0,g, g)}}\paren{\begin{array}{c}z_2\\z_1\\z_4\\z_3\end{array}}-\paren{\begin{array}{c}z_2\\z_1\\z_4\\z_3\end{array}}\ni \paren{\begin{array}{c}\ybar_1\\\ybar_2\\\ybar_1\\\ybar_2\end{array}}}}\\
		=&\dist^2\paren{\paren{\begin{array}{c}x_1-g\\x_2\\x_1\\x_2-g\end{array}},\set{\paren{\begin{array}{c}z_3\\z_2\\z_1\\z_4\end{array}}}{P_{\Omega\times {\Omega - (0,0,g, g)}}\paren{\begin{array}{c}z_2\\z_1\\z_4\\z_3\end{array}}-\paren{\begin{array}{c}z_2\\z_1\\z_4\\z_3\end{array}}\ni \paren{\begin{array}{c}\ybar_1\\\ybar_2\\\ybar_1\\\ybar_2\end{array}}}}\\
		=&\dist^2\paren{\paren{\begin{array}{c}x_1-g\\x_2\\x_1\\x_2-g\end{array}},\paren{P_{\Omega\times {\Omega - (0,0,g, g)}}\paren{\Pi_\Euclid^4\paren{\cdot}}-\Pi_\Euclid^4\paren{\cdot}}^{-1}\paren{\begin{array}{c}\ybar_1\\\ybar_2\\\ybar_1\\\ybar_2\end{array}}},
	\end{align*}
	}
	\normalsize 
	where the last three steps were just rearranging the expression to get the claimed result.
	This completes the proof.	
\end{proof}
\begin{rem}\label{rem:capser}
	The points involved in \cref{lem: addition perserves subtransversality} change if we change the order of 
	the sets involved. Of particular interest for our later analysis is the case of two sets $A$ and $B$
	where we change the order on the product space in the following way
	\begin{equation}\label{eq:casper}
		(B-g)\times (A-g)\times A\times B\subset \Euclid^4,
	\end{equation}
	in contrast to the order $A\times B \times (A-g)\times (B-g)$ as used in \cref{lem: addition perserves subtransversality}.
	Therefore the points $\xtilde$ and $\ytilde$ as well as the set $\tilde{\Lambda}$ change to
	\begin{align*}
		\xtilde'=&\paren{\xbar_1,\xbar_1-g,\xbar_2-g,\xbar_2}\\
		\ytilde'=&\paren{-\ybar_1,-\ybar_2,\ybar_1,\ybar_2}\\
		\tilde{\Lambda}'=&\set{z\in \Euclid^4}{\paren{z_3,z_4}\in \Lambda, ~\paren{z_2,z_1}\in \Lambda-\paren{g,g}}.
	\end{align*}
	That is, the collection $\klam{B-g,A-g,A,B}\subset \Euclid$ is subtransversal at $\tilde{x}'$ for $\tilde{y}'$
	relative to $\tilde{\Lambda}'$. Note that the negative part of $\ytilde$ emerged from the changed order of $B$ and $A$
	in comparison to \cref{lem: addition perserves subtransversality}.
\end{rem}
We are now ready to construct the product space on which we determine metric subregularity via subtransversality. 	
Instead of the two original sets, we consider four sets:  the sets $A, B$ and shifted sets 
$B-\frac{\lambda}{1-\lambda}g$ and $A-\frac{\lambda}{1-\lambda}g$ for some gap vector $g$. 
Our aim is to show local linear convergence of $\raar$
by adapting the approach developed in \cite{LukNguTam17} for cyclic projections.
There it was essential that one of the sets involved contains the fixed points of the mapping.
The reason for including the set $B-\frac{\lambda}{1-\lambda}g$ in our problem, therefore, lies in the
characterization of the fixed point set of the $\raar$ mapping. As established in
\cref{thm:fixed points} and \cref{cor: fixed point and its gap} fixed points $\xbar$ of $\raar$
at which $\raar$ is single-valued can be described as
\[\klam{\xbar}=P_B(\xbar)-\frac{\lambda}{1-\lambda}\paren{P_B(\xbar)-P_A(P_B(\xbar))},\]
which is an element in $B-\frac{\lambda}{1-\lambda}g$ when $g=P_B(\xbar)-P_A(P_B(\xbar))$. 
Thus, locally $B-\frac{\lambda}{1-\lambda}g$ contains
fixed points of $\raar$. 
To be able to apply our results established in \cref{lem: addition perserves subtransversality}
we have to consider the set $A-\frac{\lambda}{1-\lambda}g$ as well.

We  denote by $\Omega_{g}$ the product of the collection of sets 
$\klam{B-\frac{\lambda}{1-\lambda}g, A-\frac{\lambda}{1-\lambda}g, A , B}$. That is,
\begin{align*}
	\Omega_{g}\equiv\paren{B-\frac{\lambda}{1-\lambda}g}\times \paren{A-\frac{\lambda}{1-\lambda}g}\times A \times B.
\end{align*}
\begin{figure}[ht]
	\centering
	\begin{tikzpicture}
			\draw[fill=green, opacity=0.2] (0,0) circle (1cm);
			\draw (0.1,0.4) node {$A-\frac{\lambda}{1-\lambda}g$};
			\draw[fill=red, opacity=0.2] (4,0) circle (1.5cm);
			\draw (4.4,0.4) node {$B-\frac{\lambda}{1-\lambda}g$};
			\draw[thick, ->, >=stealth', shorten <=2pt, shorten
    >=2pt] (1,0) -- (2.5,0);
			\draw (1.75,0.5) node {$\bar{\zeta}_1$};
			\draw[thick, ->, >=stealth', shorten <=2pt, shorten
    >=2pt] (2,0) -- (11.5,0);			
			\draw (6.75,0.5) node {$\bar{\zeta}_4$};
			\draw[fill=green, opacity=0.2] (9,0) circle (1cm);
			\draw (9.4,0.4) node {$A$};
			\draw[fill=red, opacity=0.2] (13,0) circle (1.5cm);
			\draw (13.5,0.4) node {$B$};
			\fill (1,0) circle (0.05) node[above right] {$z_2$};
			\fill (2.5,0) circle (0.05) node[above right] {$z_1$};
			\fill (10,0) circle (0.05) node[above right] {$z_3$};
			\fill (11.5,0) circle (0.05) node[above right] {$z_4$};
	\end{tikzpicture}
	\caption{Framework for the convergence analysis illustrated in $\Euclid$.}\label{fig:framework}
\end{figure}
Define 
\begin{equation}\label{eq:W0}
    \begin{split}
		W_0(g) \equiv & \left\{u=(u_1,u_2,u_3,u_4) \in \Euclid^4\,\mid\, \right. \\
		& \left. u_1\in P_{B-\frac{\lambda}{1-\lambda}g}(u_2),\ u_2 \in P_{A-\frac{\lambda}{1-\lambda}g}(u_3), \ u_3\in P_A(u_4), \ u_4\in P_B(u_1)\right\}.
    \end{split}
\end{equation}
This is the set of fixed points of the mapping $P_{\Omega_{g}}\circ\Pi$ in the product space $\Euclid^4$ 
corresponding to a cycle of the cyclic 
projections operator $P_{B-\frac{\lambda}{1-\lambda}g}P_{A-\frac{\lambda}{1-\lambda}g}P_AP_B$.
By our construction, the set $W_0(g)$ could be (and for generic $g$ {\em will be}) empty;  this would be the case 
when $g$ does not correspond to a difference vector.  
The set of {\em difference vectors, $\zeta$,}  is denoted by $\mathcal{Z}(x, g)$ and defined by 
\begin{align}\label{eq:Z(u,g)}
	\mathcal{Z}(x, g)&\equiv\set{\zeta\equiv z-\Pi z }{ z \in W_0(g)\subset\Euclid^4, \ z_1=x}.
\end{align}
The last set to introduce is 
\begin{align}\label{eq:W(zeta)}
	W\paren{\zetabar}\equiv \set{u \in \Euclid^4}{ u-\Pi u =\zetabar}.
\end{align}
This set is an affine transformation of the diagonal of the product space 
and serves as a characterization of the local geometry of the sets in relation to each other at fixed 
points of $\raar$.

These sets, of course, only make sense in the context of local nearest points between the components. 
In particular, we are interested in points $x\in \Euclid$ associated with fixed points of $\raar$ and 
their associated shadow points and gap vectors, respectively $b \in P_B(x)$ and $g\in b-P_A(b)$ 
(the local gap between $A$ and $B$). 
Note that by \cref{thm: singlevalued at fixed points} 
for fixed points of $\raar$ at which $\raar$ is single-valued we have $\{b\}= P_B(x)$ and the gap 
vector $g$ is unique. 
When $x$ is a fixed point, the set $\Zcal(x,g)$ characterizes the distance between the
cyclically projected iterates of $\raar$ on the individual sets. This enables us to distinguish different 
fixed points of $\raar$ according to their respective difference vectors. 

Let the assumptions of \cref{thm: singlevalued at fixed points} hold at $\ubar\in \Fix\raar$ and let 
$z \in W_0(g)\subset\Euclid^4$ for $z_1=\ubar$ and $\{g\}=P_B(\ubar)-P_AP_B(\ubar)$. Then 
\cref{thm: singlevalued at fixed points} yields
\begin{enumerate}[(i)]
	\item $\begin{aligned}[t]
    z_3-z_4 \in&~ P_A\paren{P_B(z_1)}-P_B(z_1)=\{-g\}
  \end{aligned}$
  \item\label{item:zwei} $\begin{aligned}[t]
    z_4-z_1\in&~P_B(z_1)-(P_B(z_1)-\tfrac{\lambda}{1-\lambda}g)=\klam{\tfrac{\lambda}{1-\lambda}g},
  \end{aligned}$
\end{enumerate}
where \eqref{item:zwei} holds by \cref{thm:fixed points} which is applicable since $\raar$ is single-valued at $\ubar$
by \cref{thm: singlevalued at fixed points}. Moreover, we get by the assumptions of \cref{thm: singlevalued at fixed points}
that $P_A(R_B(\ubar))=P_A(P_B(\ubar))=z_3$. Since $z_3+\tfrac{\lambda}{1-\lambda}g$ lies in a straight line between $R_B(\ubar)=2P_B(\ubar)-
\ubar=2P_B(\ubar)-P_B(\ubar)+\tfrac{\lambda}{1-\lambda}g=P_A(P_B(\ubar))+(1+\tfrac{\lambda}{1-\lambda})g$ and $P_A(R_B(\ubar))$ 
we also deduce $P_A(z_3+\tfrac{\lambda}{1-\lambda}g)=z_3$. Using again \cref{thm:fixed points} yields
\begin{enumerate}[(i)]
	\setcounter{enumi}{2}
	\item $\begin{aligned}[t]
    z_1-z_2\in &~ P_B(\ubar)-\tfrac{\lambda}{1-\lambda}g-P_{A-\tfrac{\lambda}{1-\lambda}g}(z_3)\\
    =&~ P_B(\ubar)-\tfrac{\lambda}{1-\lambda}g-P_{A}(z_3+\tfrac{\lambda}{1-\lambda}g)+\tfrac{\lambda}{1-\lambda}g\\
    =&~ P_B(\ubar)-z_3\\
    =&~ P_B(\ubar)-P_A(P_B(\ubar))\\
    =&~ \klam{g};\\
  \end{aligned}$
	\item $\begin{aligned}[t]
  	 z_2-z_3=&~(z_2-z_1)+(z_1-z_4)+(z_4-z_3)=-\tfrac{\lambda}{1-\lambda}g.
 	 \end{aligned}$
\end{enumerate}
Figure 
\ref{fig:framework} illustrates the sets and difference vectors above. 
The individual entries of $z$ relate to the cyclically projected fixed 
point $x$ on each of the individual sets.
		
Along with the definitions above we define the operator
\begin{align*}
	\mmap {T_{\zetabar}} {\Euclid^4}{\Euclid^4}: u \mapsto 
	\set{\paren{u_1^+, u_1^+ -\zetabar_1,u_1^+ -\zetabar_1- \zetabar_2, u_1^+-\zetabar_1-\zetabar_2-\zetabar_3 }}{ \ u_1^+\in \raar u_1},
\end{align*}
for $\zetabar\in \mathcal{Z}(\xbar, g)$ where $\xbar\in \Fix \raar$ and $g=P_B(\xbar)-P_AP_B(\xbar)$. Note that 
$0=\zetabar_1+\zetabar_2+\zetabar_3+\zetabar_4$, so the expression above can be simplified to 
\begin{align}\label{eq:Tzeta}
	\mmap {T_{\zetabar}} {\Euclid^4}{\Euclid^4}: u \mapsto 
	\set{\paren{u_1^+, u_1^+ -\zetabar_1,u_1^+ -\zetabar_1- \zetabar_2, u_1^++\zetabar_4}}{ \ u_1^+\in \raar u_1}.
\end{align} 
The  mapping $T_{\zetabar}$ represents the iterates of $\raar$ on the space $\Euclid$ by shifting each iterate by
some fixed difference vector $\zetabar$.  We presume, in what follows, that $\zetabar$ is the difference 
vector corresponding to the fixed point to which our iteration is converging.  Of course, when one does not 
know the location of the fixed points, it is unlikely that the corresponding difference vector will be known, but 
this situation is no different than other studies which assume that the problem is consistent, and that all fixed 
points correspond to the zero difference vector. Our aim here is not to determine the difference vector or the fixed 
point, but rather to provide a quantification of the convergence based on verifiable regularity of the fixed point 
mapping {\em in neighborhoods} of fixed points.  

We are now ready to start building our argument.  
The following lemma establishes a connection between fixed points of $T_{\zetabar}$
to fixed points of $\raar$. 
\begin{lem}\label{prep metric subreg}
	Let $\lambda \in (0,1)$ and $A,B \subset \Euclid$ both nonempty and closed.
	Fix $\xbar \in \Fix \raar \neq \emptyset$ with $\raar$ being single-valued at $\xbar$
	and set $g\equiv P_B(\xbar)-P_A(P_B(\xbar))$.
	Furthermore, let $\zetabar\in \Zcal(\xbar,g)$ and define 
	$\Psi_{g}\equiv \paren{P_{\Omega_{g}}}\circ \Pi - \Pi $ 
	as well as $\Phi_{\zetabar}\equiv T_{\zetabar}-\Id$. Then the following hold.
	\begin{enumerate}[(i)]
		\item\label{item:1} $T_{\zetabar}$ maps $W\paren{\zetabar}$ to itself. Moreover $u \in \Fix T_{\zetabar}$
					if and only if $u \in W\paren{\zetabar}$ with $u_1\in \Fix \raar$. 
		\item\label{item:2} 
		\begin{align*}
			\Psi_{g}^{-1}\paren{\zetabar}\cap W\paren{\zetabar}\cap \mathcal{N}\subseteq \Phi_{\zetabar}^{-1}\paren{0}\cap W\paren{\zetabar},
		\end{align*}
		where $\mathcal{N}\equiv \set{z \in \Euclid^4}{P_A(z_4+\tfrac{\lambda}{1-\lambda}g)=z_3}$.
		\item\label{item:3} If the distance is with respect to the Euclidean norm, then 
		$\dist \paren{0,\Phi_{\zetabar}(u)}=2\dist\paren{u_1, \raar u_1}$ for $u \in W(\zetabar)$.
	\end{enumerate}	 
\end{lem}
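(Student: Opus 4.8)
The plan is to reduce all three parts to bookkeeping on the affine set $W(\zetabar)$. First I would make the parametrization of $W(\zetabar)$ explicit: since $\Pi u = \paren{u_2,u_3,u_4,u_1}$, the condition $u-\Pi u=\zetabar$ unwinds to $u_2=u_1-\zetabar_1$, $u_3=u_1-\zetabar_1-\zetabar_2$, and (using $\zetabar_1+\zetabar_2+\zetabar_3+\zetabar_4=0$) $u_4=u_1+\zetabar_4$, so every $u\in W(\zetabar)$ is determined by its first coordinate $u_1$. For part (i), applying $T_{\zetabar}$ to such a $u$ produces $\paren{u_1^+,\,u_1^+-\zetabar_1,\,u_1^+-\zetabar_1-\zetabar_2,\,u_1^++\zetabar_4}$ with $u_1^+\in\raar u_1$, which again has exactly the $W(\zetabar)$ form, giving $T_{\zetabar}\paren{W(\zetabar)}\subseteq W(\zetabar)$. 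The fixed-point characterization then follows by comparing first coordinates: $u\in\Fix T_{\zetabar}$ forces $u_1=u_1^+\in\raar u_1$ together with the $W(\zetabar)$ relations, while conversely, for $u\in W(\zetabar)$ with $u_1\in\Fix\raar$ one simply chooses the selection $u_1^+=u_1$.

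Part (ii) is the substantive step. I would first record the explicit difference vector obtained from the computations preceding the lemma, namely $\zetabar=\paren{g,\,-\tfrac{\lambda}{1-\lambda}g,\,-g,\,\tfrac{\lambda}{1-\lambda}g}$. Membership $u\in\Psi_{g}^{-1}(\zetabar)$ unpacks, via the identity $P_{C-v}(x)=P_C(x+v)-v$ for the shifted sets, into four projection statements; the one I actually need is the fourth, $u_1+\tfrac{\lambda}{1-\lambda}g\in P_B(u_1)$. Setting $b\equiv u_1+\tfrac{\lambda}{1-\lambda}g$, the $W(\zetabar)$ relations give $b=u_4$, and the defining condition of $\mathcal{N}$, $P_A\paren{u_4+\tfrac{\lambda}{1-\lambda}g}=u_3$, becomes $u_3\in P_A(2b-u_1)$ once one observes $u_4+\tfrac{\lambda}{1-\lambda}g=u_1+\tfrac{2\lambda}{1-\lambda}g=2b-u_1$. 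With $u_3=u_1+\tfrac{2\lambda-1}{1-\lambda}g$ the reflection selection $a^+\equiv 2u_3-(2b-u_1)\in R_A(2b-u_1)$ simplifies to $a^+=u_1-2g$, whence $\tfrac{\lambda}{2}\paren{a^++u_1}+\paren{1-\lambda}b=u_1$, that is, $u_1\in\raar u_1$. By part (i) this places $u$ in $\Fix T_{\zetabar}\cap W(\zetabar)=\Phi_{\zetabar}^{-1}(0)\cap W(\zetabar)$, which is the asserted inclusion.

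For part (iii) I would substitute the $W(\zetabar)$ relations directly into $\Phi_{\zetabar}(u)=T_{\zetabar}(u)-u$ and watch the shifts cancel coordinatewise: the second entry is $u_1^+-\zetabar_1-u_2=u_1^+-u_1$, and likewise the third and fourth entries each collapse to $u_1^+-u_1$, so every element of $\Phi_{\zetabar}(u)$ has the diagonal form $\paren{u_1^+-u_1,\,u_1^+-u_1,\,u_1^+-u_1,\,u_1^+-u_1}$ of Euclidean norm $2\norm{u_1^+-u_1}$. Taking the infimum over $u_1^+\in\raar u_1$ yields $\dist\paren{0,\Phi_{\zetabar}(u)}=2\dist(u_1,\raar u_1)$. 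The only genuinely delicate point is part (ii): the obstacle is not any single estimate but arranging the four projector conditions, the diagonal-shift relations of $W(\zetabar)$, and the auxiliary set $\mathcal{N}$ so that the reflection selection $a^+$ fed into $\raar$ is precisely the one returning $u_1$. The care required is consistent bookkeeping with the shifted sets through $P_{C-v}(x)=P_C(x+v)-v$ and the identification $2b-u_1=u_4+\tfrac{\lambda}{1-\lambda}g$, which is exactly what makes $\mathcal{N}$ supply the value of $P_A$ needed to close the fixed-point equation.
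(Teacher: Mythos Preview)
Your proof is correct and follows essentially the same approach as the paper: parts (i) and (iii) are identical bookkeeping with the $W(\zetabar)$ parametrization, and in part (ii) both arguments extract $u_4\in P_B(u_1)$ from the fourth coordinate of $\Psi_g$, use $\mathcal{N}$ to identify $P_A(2u_4-u_1)=u_3$, and then verify $u_1\in\raar u_1$ by direct computation with the explicit form $\zetabar=\paren{g,-\tfrac{\lambda}{1-\lambda}g,-g,\tfrac{\lambda}{1-\lambda}g}$. The only cosmetic difference is that you compute via the reflection selection $a^+=2u_3-(2b-u_1)$ in the defining formula \eqref{raar}, whereas the paper uses the expanded form $\raar(z_1)\ni\lambda(z_3+z_1)+(1-2\lambda)z_4$ and reduces to $z_1+\lambda\zetabar_3+(1-\lambda)\zetabar_4=z_1$.
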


\begin{rem}\label{e:Ncal}
Note that the set $\mathcal{N}$ guarantees equality of the description of the fixed point set in \cref{thm:fixed points}.
In our main result $\mathcal{N}$ will not appear anymore. This is due to the fact that the assumptions of \cref{thm: singlevalued at fixed points}
assure that the neighborhood we consider will be a subset of $\mathcal{N}$.
\end{rem}
\begin{proof}[Proof of \cref{prep metric subreg}]
	\eqref{item:1}. The first part of \eqref{item:1} follows immediately by the definition of $T_{\zetabar}$ 
	and $W(\zetabar)$. Now let $u \in \Fix T_{\zetabar}$,
	\begin{align*}
		&&\Longleftrightarrow&& &u_1 \in \Fix \raar \text{ and } u_2=u_1 -\zetabar_1,~ u_3=u_1 -\zetabar_1- \zetabar_2, ~u_4=u_1+\zetabar_4\\
		&&\Longleftrightarrow&& &u_1 \in \Fix \raar \text{ and } u_2=u_1 -\zetabar_1, ~u_3=u_2- \zetabar_2, ~u_4=u_1+\zetabar_4\\
		&&\Longleftrightarrow&& &u_1 \in \Fix \raar \text{ and } u \in W(\zetabar),
	\end{align*}
	which proves the rest of \eqref{item:1}.
			
	\eqref{item:2}. For the second part of the lemma let 
	$z \in \Psi^{-1}_{g}\paren{\zetabar}\cap W(\zetabar)\cap \mathcal{N}$. This means nothing more than 
	\begin{align*}
		\zetabar\in \Psi_{g}\paren{z} \text{ and }z-\Pi z =\zetabar,
	\end{align*} 
	which is equivalent to
	\begin{align*}
		\zetabar\in P_{\Omega_{g}}\Pi z -\Pi z \text{ and }z-\Pi z =\zetabar.
	\end{align*}
	This implies
	\begin{align*}
		z_1 \in P_{B-\frac{\lambda}{1-\lambda}g}P_{A-\frac{\lambda}{1-\lambda}g}P_AP_Bz_1 \text{ and } z-\Pi z=\zetabar.
	\end{align*}		
	The mapping $\Phi_{\zetabar}(z)=T_{\zetabar}z -z$ has the image $\paren{0,0}$ if $z_1 \in \Fix \raar z_1$.
	By $\zetabar_4=z_4-z_1$ and $\zetabar_4\in P_B(z_1)-z_1=\tfrac{\lambda}{1-\lambda}$ we know that $z_4\in P_B(z_1)$.
	This together with the definition of $\mathcal{N}$ yields 
	$P_A\paren{R_B(z_1)}\ni P_A\paren{2z_4-z_1}=P_A\paren{z_4+\tfrac{\lambda}{1-\lambda}g}=z_3$. Inserting this 
	in $\raar(z_1)$ yields
	\begin{align*}
		\raar(z_1) = &\lambda \paren{P_A\paren{R_B(z_1)}+z_1}+\paren{1-2\lambda}P_B(z_1)\\
		\ni & \lambda \paren{z_3+z_1}+\paren{1-2\lambda}z_4\\
		= & z_1+\lambda(z_3-z_4)+\paren{1-\lambda}(z_4-z_1)\\
		 = & z_1+\zetabar_3+\paren{1-\lambda}\zetabar_4\\
		 = & z_1,
	\end{align*}
	since $\zetabar$ is generated by a fixed point of $\raar$. Thus $z_1\in \Fix \raar$, 
	which proves $z\in \Phi_{\zetabar}^{-1}\paren{0}$ and 
	completes the proof of \eqref{item:2}.
		 
	\eqref{item:3}. This part of the proof is a routine calculation:
	\begin{align*}
		&\dist \paren{0,\Phi_{\zetabar}(u)}\\
		&=\dist \paren{0,T_{\zetabar}u-u}\\
		&=\sqrt{\dist^2\paren{0,\raar u_1 -u_1}+\dist^2\paren{0,\raar u_1-\zetabar_1 -u_2}+\cdots +\dist^2\paren{0,\raar u_1 -\sum_{i=1}^3 \zetabar_i-u_4}}\\
		&=\sqrt{4 \dist^2\paren{0,\raar u_1 -u_1}}\\
		&= 2\dist\paren{0, \raar u_1-u_1}.
	\end{align*}
\end{proof}
We are now ready for the main result of this subsection.  We show that the mapping $T_\zetabar-\Id$ is metrically 
subregular on neighborhoods of its zeros;  from this we can conclude that the fixed point iteration generated by the 
mapping $T_\zetabar$
is locally linearly convergent, from which we will be able to deduce local linear convergence of $\raar$. 
\begin{propn}[metric subregularity of $T_\zetabar$ by subtransversality]\label{prop:metric subreg}
	Let $\lambda \in (0,1)$, $\xbar \in \Fix \raar$  with $\raar$ being single-valued at 
	$\xbar$ and set $g\equiv P_B(\xbar)-P_A(P_B(\xbar))$.
	Furthermore, let $\zetabar\in \mathcal{Z}\paren{\xbar, g}$ and 
	$\ubar=\paren{\ubar_1,\ubar_2, \ubar_3, \ubar_4}\in W_0(g)$ satisfy $\zetabar=\ubar-\Pi \ubar$ with 
	$\ubar_1=\xbar$. 
    Let $T_\zetabar$ be defined by \eqref{eq:Tzeta} and define $\Phi_\zetabar\equiv T_\zetabar-\Id$. 
	Suppose the following hold:
	\begin{enumerate}[(i)]
		\item\label{item: subtransv} the collection of sets $\klam{B-\tfrac{\lambda}{1-\lambda}g, A-\tfrac{\lambda}{1-\lambda}g,A,B}$ 
		is subtransversal at $\ubar$ for $\zetabar$ relative to $\Lambda \subseteq W\paren{\zetabar}$ 
		with constant $\kappa$ and neighborhood $U$ of $\ubar$;
		\item\label{item: technical ass} there exists a positive constant $\sigma $ such that
			\[\dist \paren{\zetabar, \Psi_{g}(u)}\leq \sigma \dist\paren{0, \Phi_{\zetabar}(u)}, \quad \forall u \in \Lambda\cap U \text{ with }u_1 \in B-\tfrac{\lambda}{1-\lambda}g.\]
	\end{enumerate}
	Then the mapping $\Phi_{\zetabar}\equiv T_{\zetabar}-\Id $ is metrically subregular for 0 on U
	relative to $\Lambda \cap \mathcal{N}$ with constant $\bar{\kappa}=\kappa\sigma$,
	where $\mathcal{N}\equiv \set{z \in \Euclid^4}{P_A(z_4+\tfrac{\lambda}{1-\lambda}g)=z_3}$.
\end{propn}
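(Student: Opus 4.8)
The plan is to unwind \cref{def:2.17}: proving that $\Phi_{\zetabar}$ is metrically subregular for $0$ on $U$ relative to $\Lambda\cap\mathcal{N}$ with constant $\kappabar=\kappa\sigma$ amounts to establishing
\[
\dist\paren{u,\Phi_{\zetabar}^{-1}(0)\cap\Lambda\cap\mathcal{N}}\leq \kappa\sigma\,\dist\paren{0,\Phi_{\zetabar}(u)}\qquad\paren{\forall u\in U\cap\Lambda\cap\mathcal{N}}.
\]
The whole argument is a chaining of three facts supplied by the hypotheses and by \cref{prep metric subreg}: (a) the inclusion of \cref{prep metric subreg}\eqref{item:2}, which lets us trade the zero set of $\Phi_{\zetabar}$ for the preimage $\Psi_{g}^{-1}(\zetabar)$; (b) subtransversality of the four-set collection (assumption \eqref{item: subtransv}), which by \cref{def:3.6} is precisely metric subregularity of $\Psi_{g}$ for $\zetabar$ relative to $\Lambda$; and (c) the residual comparison (assumption \eqref{item: technical ass}), which converts the defect $\dist(\zetabar,\Psi_{g}(u))$ into the defect $\dist(0,\Phi_{\zetabar}(u))$.

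First I would fix $u\in U\cap\Lambda\cap\mathcal{N}$ and, since $\Lambda\subseteq W\paren{\zetabar}$, regard $u$ as a point of $W\paren{\zetabar}$. From \cref{prep metric subreg}\eqref{item:2} together with $\Lambda\subseteq W\paren{\zetabar}$ one obtains the solution-set inclusion
\[
\Psi_{g}^{-1}(\zetabar)\cap\Lambda\cap\mathcal{N}\subseteq \Phi_{\zetabar}^{-1}(0)\cap\Lambda\cap\mathcal{N},
\]
which I would use to dominate $\dist\paren{u,\Phi_{\zetabar}^{-1}(0)\cap\Lambda\cap\mathcal{N}}$ by the distance from $u$ to the $\Psi_{g}$-preimage. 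Then subtransversality (assumption \eqref{item: subtransv}) gives $\dist\paren{u,\Psi_{g}^{-1}(\zetabar)\cap\Lambda}\leq\kappa\,\dist\paren{\zetabar,\Psi_{g}(u)}$, and the technical assumption \eqref{item: technical ass} gives $\dist\paren{\zetabar,\Psi_{g}(u)}\leq\sigma\,\dist\paren{0,\Phi_{\zetabar}(u)}$, the latter being applicable because the relevant nearest point has first coordinate in $B-\tfrac{\lambda}{1-\lambda}g$. Concatenating the three estimates produces the modulus $\kappabar=\kappa\sigma$ and closes the argument.

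The delicate point — and the step I expect to be the main obstacle — is precisely the passage between the two solution sets, i.e. reconciling $\dist\paren{u,\Phi_{\zetabar}^{-1}(0)\cap\Lambda\cap\mathcal{N}}$ with $\dist\paren{u,\Psi_{g}^{-1}(\zetabar)\cap\Lambda}$. This is where $\mathcal{N}$ does its work. A point $z$ of $\Psi_{g}^{-1}(\zetabar)\cap\Lambda$ only encodes a fixed point of the cyclic projection $P_{B-\frac{\lambda}{1-\lambda}g}P_{A-\frac{\lambda}{1-\lambda}g}P_AP_B$ lifted to $\Euclid^4$, whereas a point of $\Phi_{\zetabar}^{-1}(0)$ must encode a genuine fixed point of $\raar$; for nonconvex $A$ these need not coincide, since the identity $P_A\paren{z_4+\tfrac{\lambda}{1-\lambda}g}=z_3$ — equivalently $P_A\paren{R_B(z_1)}=z_3$ obtained from $z_4+\tfrac{\lambda}{1-\lambda}g=2z_4-z_1=R_B(z_1)$ — can fail when one projects from a point on the normal ray farther out than the reference gap. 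Restricting to $\mathcal{N}$ forces this identity and hence, via \cref{thm:fixed points} and the computation preceding \eqref{eq:Tzeta}, promotes a cyclic-projection solution to a true zero of $\Phi_{\zetabar}$. The care needed, therefore, is to ensure that the nearest point realizing $\dist\paren{u,\Psi_{g}^{-1}(\zetabar)\cap\Lambda}$ can be taken inside $\mathcal{N}$ (so that the solution-set inclusion applies to it) and with first coordinate in $B-\tfrac{\lambda}{1-\lambda}g$ (so that assumption \eqref{item: technical ass} is applicable). Here the single-valuedness of $\raar$ at $\xbar$ furnished by \cref{thm: singlevalued at fixed points}, and the residual identity $\dist\paren{0,\Phi_{\zetabar}(u)}=2\dist\paren{u_1,\raar u_1}$ of \cref{prep metric subreg}\eqref{item:3}, are the tools I would marshal to control these memberships and to justify the first inequality cleanly.
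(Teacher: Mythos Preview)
Your approach is the paper's: the proof there is exactly the three-line chain
\[
\dist\paren{u,\Phi_{\zetabar}^{-1}(0)\cap\Lambda\cap\mathcal{N}}\leq\dist\paren{u,\Psi_g^{-1}(\zetabar)\cap\Lambda\cap\mathcal{N}}\leq\kappa\,\dist\paren{\zetabar,\Psi_g(u)}\leq\kappa\sigma\,\dist\paren{0,\Phi_{\zetabar}(u)},
\]
invoking \cref{prep metric subreg}\eqref{item:2}, assumption \eqref{item: subtransv}, and assumption \eqref{item: technical ass} in that order, for $u\in U\cap\Lambda\cap\mathcal{N}$ with $u_1\in B-\tfrac{\lambda}{1-\lambda}g$.

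Two small clarifications on the points you flag as delicate. First, the constraint $u_1\in B-\tfrac{\lambda}{1-\lambda}g$ is imposed on $u$ itself (the paper simply restricts the quantifier), not on any nearest point; assumption \eqref{item: technical ass} therefore applies directly, and no argument about nearest points is needed for that step. Second, the mismatch you notice --- subtransversality controls $\dist\paren{u,\Psi_g^{-1}(\zetabar)\cap\Lambda}$, whereas the inclusion from \cref{prep metric subreg}\eqref{item:2} needs the intersection with $\mathcal{N}$ --- is genuine, and the paper's proof of the proposition glosses over it just as you suspected. It is not resolved here; rather, it is absorbed downstream in \cref{thm:conv raar}, where the neighborhoods are arranged so that $U\cap\tilde{\Lambda}\subset\mathcal{N}$ (cf.\ \cref{e:Ncal}), making the extra intersection with $\mathcal{N}$ vacuous in the application.
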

\begin{proof}
	This is an application of the assumptions and \cref{prep metric subreg}\eqref{item:2}
	\begin{align*}
		\paren{\forall u \in U \cap \Lambda \cap \mathcal{N}\text{ with }u_1\in B-\tfrac{\lambda}{1-\lambda}g} \quad \dist\paren{u, \Phi_{\zetabar}^{-1}(0)\cap \Lambda \cap \mathcal{N}}&\leq \dist \paren{u, \Psi_{g}^{-1}(0)\cap \Lambda \cap \mathcal{N}}\\
		&\leq \kappa \dist\paren{\zetabar,\Psi_{g}(u)}\\
		&\leq \kappa \sigma \dist\paren{0, \Phi_{\zetabar}(u)},
	\end{align*}
	i.e. $\Phi$ is metrically subregular for $0$ on $U$ relative to $\Lambda \cap \mathcal{N}$ with constant 
	$\bar{\kappa}$, as claimed.
\end{proof}		
By \cref{thm:2.18},\cref{prop:metric subreg} and \cref{thm: singlevalued at fixed points}
the {\em three} ingredients to get convergence are given by the regularity of the sets $A$ and $B$, 
subtransversality of the collection of sets $\klam{A,B}$ and the additional 
assumption \eqref{item: technical ass} in \cref{prop:metric subreg}. As seen 
in \cite[Proposition 3.5]{LukNguTam17} this is also true
for the alternating projection algorithm. If the intersection $A\cap B$ is nonempty, assuming the 
stronger property of transversality, super-regularity is enough to show convergence of the
Douglas-Rachford algorithm, \cite[Theorem 6.8]{Phan16}\cite[Theorem 3.18]{HesseLuk2013}.  For alternating projections 
one only needs transversality at points of intersection and super-regularity of {\em one} 
of the sets \cite[Theorem 5.16]{LewLukMal2009}.  In any case,  the additional assumption 
\eqref{item: technical ass}, is not needed when the assumptions on the fixed points are strong enough.  
This is also the case for consistent feasibility and the relaxed Douglas-Rachford method
as seen next.
\begin{propn}[intersecting sets]\label{propn:technical ass for consistent}
	As before let $\lambda \in (0,1)$. Moreover, assume that the intersection of $A$ and $B$ is 
	nonempty, i.e. $A\cap B\neq \emptyset$.	Thus, for every $\xbar \in A\cap B\subset \Fix \raar$
	we have $g\equiv P_B(\xbar)-P_A(P_B(\xbar))=0$.
	Furthermore, let $\zetabar\in \mathcal{Z}\paren{\xbar, g}$. 
    Then \eqref{item: technical ass} in \cref{prop:metric subreg} is always satisfied on  
   	$\Lambda\subset W(\zetabar)$ with $\sigma=\frac{1}{\sqrt{2}\lambda}$.
\end{propn}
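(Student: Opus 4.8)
The plan is to exploit the fact that consistency forces both the gap vector and the difference vector to vanish, which collapses the entire product-space construction onto the diagonal and makes the reflection across $B$ act trivially. First I would record that, since $\xbar\in A\cap B$, the common point is its own unique nearest point in each set, so $P_B(\xbar)=\{\xbar\}$ and $P_A(\xbar)=\{\xbar\}$, whence $g=P_B(\xbar)-P_A(P_B(\xbar))=0$. Consequently the shifted sets coincide with the originals and $\Omega_{g}=B\times A\times A\times B$. I would then argue that $\mathcal{Z}(\xbar,0)=\{0\}$: tracing the defining inclusions of $W_0(0)$ in \eqref{eq:W0} starting from $z_1=\xbar\in B$, single-valuedness of $P_B$ at $\xbar$ forces $z_4=\xbar$, then $z_3=\xbar$ and $z_2=\xbar$ by the same argument on $A$, while $z_1\in P_B(z_2)=\{\xbar\}$ is consistent. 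Hence the only $z\in W_0(0)$ with $z_1=\xbar$ is $(\xbar,\xbar,\xbar,\xbar)$, so $\zetabar=z-\Pi z=0$. Therefore $W(\zetabar)=W(0)$ is the diagonal of $\Euclid^4$, and every $u\in\Lambda\subseteq W(0)$ has the form $u=(x,x,x,x)$ with $x=u_1$.

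Next I would evaluate both sides of assumption \eqref{item: technical ass} of \cref{prop:metric subreg} on the restricted set $u_1=x\in B-\tfrac{\lambda}{1-\lambda}g=B$. For the left-hand side, on the diagonal $\Pi u=(x,x,x,x)$, so $\Psi_{g}(u)=\paren{P_B(x)-x,\,P_A(x)-x,\,P_A(x)-x,\,P_B(x)-x}$; since $x\in B$ makes $P_B(x)=\{x\}$, the first and fourth components vanish and, with the Euclidean norm on $\Euclid^4$, $\dist(0,\Psi_{g}(u))=\sqrt{2}\,\dist(x,A)$. For the right-hand side I would use $x\in B$ again to note $R_B(x)=x$, so that $\raar(x)=\tfrac{\lambda}{2}\paren{R_A(x)+x}+(1-\lambda)x=\lambda P_A(x)+(1-\lambda)x$, giving $x-\raar(x)=\lambda\paren{x-P_A(x)}$ and hence $\dist(x,\raar x)=\lambda\,\dist(x,A)$.

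Finally I would invoke \cref{prep metric subreg}\eqref{item:3}, which is applicable because $u\in W(\zetabar)$, to write $\dist(0,\Phi_{\zetabar}(u))=2\dist(u_1,\raar u_1)=2\lambda\,\dist(x,A)$. Substituting $\sigma=\tfrac{1}{\sqrt{2}\lambda}$ then yields $\sigma\,\dist(0,\Phi_{\zetabar}(u))=\tfrac{1}{\sqrt{2}\lambda}\cdot 2\lambda\,\dist(x,A)=\sqrt{2}\,\dist(x,A)=\dist(\zetabar,\Psi_{g}(u))$, so \eqref{item: technical ass} holds, in fact with equality and independently of the neighborhood $U$. I do not anticipate a serious obstacle: the whole difficulty of the inconsistent case, namely estimating the misalignment between the cyclic-projection residual $\Psi_{g}$ and the $\raar$ residual $\Phi_{\zetabar}$, disappears once $g=0$ makes $R_B$ act as the identity on $B$. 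The only step requiring a little care is establishing $\zetabar=0$, i.e. that $\mathcal{Z}(\xbar,0)$ reduces to the zero vector, which is precisely where single-valuedness of the projectors at the common point $\xbar$ is used.
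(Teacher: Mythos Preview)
Your proof is correct and follows essentially the same route as the paper: both arguments reduce to $\zetabar=0$, restrict to $u_1\in B$, compute $\raar(u_1)-u_1=\lambda\bigl(P_A(u_1)-u_1\bigr)$ from $R_B(u_1)=u_1$, and then compare $\dist(\zetabar,\Psi_g(u))=\sqrt{2}\,\dist(u_1,A)$ with $\dist(0,\Phi_{\zetabar}(u))=2\lambda\,\dist(u_1,A)$. The only cosmetic differences are that you invoke \cref{prep metric subreg}\eqref{item:3} for the right-hand side whereas the paper expands $T_0(u)-u$ directly, and you spell out the verification $\mathcal{Z}(\xbar,0)=\{0\}$ in more detail than the paper does.
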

\begin{proof}
	Since $\xbar\in A\cap B$ and $g=0$, we get $\zetabar=(0,0,0,0)$.
	Moreover, note that for every $b\in B$ we 
	gather $\raar(b)-b=\lambda(P_A(b)-b)$, since
	\begin{align*}
		\raar(b)-b
			&= \frac{\lambda}{2}\paren{R_AR_B(b)+b}+(1-\lambda)P_B(b)-b\\
			&= \frac{\lambda}{2}\paren{R_A(b)+b}+(1-\lambda)b-b\\
			&= \lambda P_A(b)-\lambda b\\
			&= \lambda\paren{ P_A(b)- b}.					
	\end{align*}
	Therefore, we deduce for 
	$u \in \Lambda \subset W(\zetabar)=\set{u\in \Euclid^4}{u_i=u_j,~i,j\in \klam{1,2,3,4}}$  
	with $u_1\in B$
	\begin{align*}
		T_0(u)-u
			&=\paren{\raar(u_1)-u_1,\raar(u_1)-u_1,\raar(u_1)-u_1,\raar(u_1)-u_1}\\
			&=\paren{\lambda\paren{ P_A(u_1)- u_1},\lambda\paren{ P_A(u_1)- u_1},\lambda\paren{ P_A(u_1)- u_1},\lambda\paren{ P_A(u_1)- u_1}},
	\end{align*}
	and thus
	\begin{align}\label{eq:righthandside}
		\dist^2\paren{0, \Phi_{\zetabar}(u)}=\dist^2\paren{0,T_0(u)-u}=4\dist^2\paren{0,\lambda\paren{ P_A(u_1)- u_1}}.
	\end{align}
	On the other hand
	\begin{align}\label{eq:lefthandside}
		\dist^2\paren{\zetabar, \Psi_{g}(u)}
			&=\dist^2\paren{(0,0,0,0), \Psi_{0}(u)}\nonumber\\
			&=\dist^2\paren{(0,0,0,0), P_{\Omega_0}\Pi(u)-\Pi(u)}\nonumber\\
			&=2\dist^2\paren{0,P_A(u_1)-u_1},
	\end{align}
	since $\Omega_0=B\times A\times A\times B$. Combining \eqref{eq:lefthandside}
	and \eqref{eq:righthandside} yields \eqref{item: technical ass} in \cref{prop:metric subreg}
	with $\sigma=\frac{1}{\sqrt{2}\lambda}$.
\end{proof}

\subsection{Local Linear Convergence of \texorpdfstring{$\raar$}{T {DRlambda}}}\label{s:lin conv of raar}

\begin{lem}[uniqueness of difference vector for fixed points of $\raar$]\label{lem: uniqueness of zeta}
	Let $\lambda \in (0,1)$, and let $\xbar$ be a point in $\Fix \raar$ where 
    $A,B\subset \Euclid$ satisfy the assumptions 
	of \cref{thm: singlevalued at fixed points} with neighborhoods 
	$U(A,\epsilon,\xbar)$ and $U(B,\epsilon,\xbar)$. Then 
	$\{\zetabar\}= \mathcal{Z}(\xbar,g)\subset \Euclid^4$ for
	$\{g\} = P_B(\xbar)-P_A(P_B(\xbar))$ is unique and given by
	\begin{align*}
		\zetabar=(\zetabar_1,\dots,\zetabar_4)=\paren{g,-\tfrac{\lambda}{1-\lambda}g,-g,\tfrac{\lambda}{1-\lambda}g}.
	\end{align*}
\end{lem}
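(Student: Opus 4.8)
The plan is to exploit that, under the hypotheses of \cref{thm: singlevalued at fixed points}, every projection appearing in the definition \eqref{eq:W0} of $W_0(g)$ is single-valued at the relevant points, so that the cyclic structure defining $W_0(g)$ forces a unique admissible tuple $z$ with $z_1=\xbar$; this immediately makes $\Zcal(\xbar,g)$ a singleton, and a direct computation identifies its value.

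First I would record that $\{g\}$ is well defined: by \cref{thm: singlevalued at fixed points} we have $P_B(\xbar)=\{\bbar\}$ and, since $P_A(\bbar)=P_A(R_B(\xbar))=\{\abar\}$ is single-valued as well, $g=\bbar-\abar$ is a single vector. Next, fix any $\zeta\in\Zcal(\xbar,g)$; by \eqref{eq:Z(u,g)} we have $\zeta=z-\Pi z$ for some $z\in W_0(g)$ with $z_1=\xbar$. Reading the four defining relations of $W_0(g)$ around the cycle and using single-valuedness at each stage, $z$ is completely determined: $z_4\in P_B(\xbar)=\{\bbar\}$, then $z_3\in P_A(z_4)=\{\abar\}$, and, using the shift identity $P_{C-c}(w)=P_C(w+c)-c$ together with $P_A\paren{\abar+\tfrac{\lambda}{1-\lambda}g}=\abar$ (which holds because $\abar=P_A(R_B(\xbar))$ and $\abar+\tfrac{\lambda}{1-\lambda}g$ lies on the segment $[\abar,R_B(\xbar)]$, along which the projection onto $A$ is preserved), $z_2\in P_{A-\frac{\lambda}{1-\lambda}g}(z_3)=\klam{\abar-\tfrac{\lambda}{1-\lambda}g}$. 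Forming $\zeta=z-\Pi z$ and invoking the gap identity $\bbar-\xbar=\tfrac{\lambda}{1-\lambda}g$ of \cref{cor: fixed point and its gap}, one reads off $\zeta_1=g$, $\zeta_2=-\tfrac{\lambda}{1-\lambda}g$, $\zeta_3=-g$, $\zeta_4=\tfrac{\lambda}{1-\lambda}g$. This reproduces exactly the computation (i)--(iv) already carried out before the definition of $T_{\zetabar}$ and proves $\Zcal(\xbar,g)\subseteq\{\zetabar\}$.

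It then remains to show $\Zcal(\xbar,g)$ is nonempty, i.e. that the tuple $\ubar=\paren{\xbar,\ \abar-\tfrac{\lambda}{1-\lambda}g,\ \abar,\ \bbar}$ singled out above actually lies in $W_0(g)$. Three of the four membership relations ($\bbar\in P_B(\xbar)$, $\abar\in P_A(\bbar)$, and $\abar-\tfrac{\lambda}{1-\lambda}g\in P_{A-\frac{\lambda}{1-\lambda}g}(\abar)$) are immediate from the identities just used. The remaining relation $\xbar\in P_{B-\frac{\lambda}{1-\lambda}g}(\ubar_2)$ reduces, via the shift identity and $\xbar=\bbar-\tfrac{\lambda}{1-\lambda}g$, to $\bbar\in P_B(\abar)$, equivalently the condition $f\in P_B(P_A(f))$ flagged in \cref{r:Fix char}\eqref{r:Fix char i}.

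The hard part will be precisely this last relation $\bbar\in P_B(\abar)$, the closing of the projection cycle. When $\lambda\geq\tfrac12$ it is automatic: then $\tfrac{\lambda}{1-\lambda}\geq 1$, so $\abar=\bbar-g$ lies on the segment $[\bbar,\xbar]$, and $\bbar=P_B(\xbar)$ forces $\bbar=P_B$ of every point of that segment. For general $\lambda\in(0,1)$ the point $\abar$ lies \emph{beyond} $\xbar$ along the normal ray, so single-valuedness alone does not suffice; here one must genuinely use that $\xbar\in\Lambda_{\bbar}=P_B^{-1}(\bbar)$ together with the super-regularity at a distance of $B$ relative to $\Lambda_{\bbar}$ at $\bbar$ (and the construction of the neighborhood $U(B,\epsilon,\xbar)$) to conclude that $\abar$ also projects onto $\bbar$. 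This is the step that actually invokes the regularity hypotheses of \cref{thm: singlevalued at fixed points} rather than bare single-valuedness. Once $\bbar\in P_B(\abar)$ is secured we obtain $\ubar\in W_0(g)$, hence $\zetabar\in\Zcal(\xbar,g)$, which together with the uniqueness established above yields $\Zcal(\xbar,g)=\{\zetabar\}$ as claimed.
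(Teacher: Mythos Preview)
Your uniqueness argument is correct and is exactly what the paper does, only more explicitly: the paper's proof is a two-line pointer to the definitions and to \cref{r:Fix char}\eqref{r:Fix char iii} (uniqueness of $g$), tacitly relying on the componentwise computation (i)--(iv) carried out in the text just before the definition of $T_{\zetabar}$. Your step-by-step walk around the cycle reproduces that computation and is fine.

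Where you diverge from the paper is in also arguing nonemptiness of $\Zcal(\xbar,g)$. The paper's proof does not address this at all; it only asserts uniqueness. You are right that the equality $\{\zetabar\}=\Zcal(\xbar,g)$ formally requires $\ubar\in W_0(g)$, and you correctly reduce the missing membership $\xbar\in P_{B-\frac{\lambda}{1-\lambda}g}(\ubar_2)$ to the cycle-closing condition $\bbar\in P_B(\abar)$, i.e.\ $f\in P_B(P_A(f))$ as flagged in \cref{r:Fix char}\eqref{r:Fix char i}. Your observation that this is automatic for $\lambda\geq\tfrac12$ (since then $\abar\in[\bbar,\xbar]$) is also correct.

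The gap is your proposed resolution for $\lambda<\tfrac12$. Super-regularity at a distance of $B$ relative to $\Lambda_{\bbar}$ is an \emph{angle} condition (inequality \eqref{eq: epsilon subregularity}); it does not by itself guarantee that a point farther out along the normal ray than $\xbar$ still projects back to $\bbar$. What you would actually need is a reach/prox-regularity bound at $\bbar$ of size at least $\|g\|$, and none of the hypotheses \eqref{ass B}--\eqref{ass 2} of \cref{thm: singlevalued at fixed points} supply this (they place $\xbar$ and $2\bbar-\xbar$ in the relevant neighborhoods, not $\abar$). So your plan does not close the nonemptiness argument for small $\lambda$; on the other hand, the paper simply does not attempt to, treating the explicit form of $\zetabar$ as already established by the earlier computation and leaving nonemptiness implicit.
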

\begin{proof}
	By definition \eqref{eq:Z(u,g)} $\mathcal{Z}(x, g)$ is given by 
	\begin{align*}
		\mathcal{Z}(x, g)&\equiv\set{\zeta\equiv z-\Pi z }{ z \in W_0(g)\subset\Euclid^4, \ z_1=x},
	\end{align*}
	for 
	\begin{align*}
		W_0(g) \equiv \set{u \in \Euclid^4}{ u_1\in P_{B-\frac{\lambda}{1-\lambda}g}(u_2),%
 \ u_2 \in P_{A-\frac{\lambda}{1-\lambda}g}(u_3), \ u_3\in P_A(u_4), \ u_4\in P_B(u_1)}.
	\end{align*}
	Thus, the uniqueness of $\zetabar$ is a direct implication of the 
	uniqueness of $g$ as seen in \cref{r:Fix char}\eqref{r:Fix char iii}.
\end{proof}

Now we are ready to present the main result. The proof is based 
on the basic convergence result 
\cref{thm:2.18} and the facts from  \cref{s: charac of fixed points} 
and \cref{s: conv analysis}.
\begin{thm}[local linear convergence of $\raar$]\label{thm:conv raar}
	Let $\lambda \in (0,1)$, and let $\xbar$ be a point in $\Fix \raar$ where 
    $A,B\subset \Euclid$ satisfy the assumptions 
	of \cref{thm: singlevalued at fixed points} 
	with neighborhoods $U(A,\epsilon,\xbar)$ and $U(B,\epsilon,\xbar)$.  
	Set $\{g\} = P_B(\xbar)-P_A(P_B(\xbar))$ and 
    $\{\zetabar\}= \mathcal{Z}(\xbar,g)$ ($\zetabar=(\zetabar_1,\dots,\zetabar_4)\in \Euclid^4$). 
	Suppose that, at all $x\in \Fix \raar$ with $g\in P_B(x) - P_AP_B(x)$, 
	the sets $A,B\subset \Euclid$ satisfy the assumptions 
	of \cref{thm: singlevalued at fixed points} with corresponding 
	neighborhoods $U(A,\epsilon,x)$ and $U(B,\epsilon,x)$.
	Define the set 
	\begin{equation}\label{eq: S0}
		S_0\equiv \left\{x\in\Fix\raar~|~ \{g\} = P_B(x)-P_A(P_B(x))\right\}
	\end{equation}	 
	and let 
	\begin{equation}
		S_j \equiv \paren{S_0-\sum_{i=1}^{j-1}\zetabar_i}\quad \paren{j=1,2,3,4}.\label{eq:80}
	\end{equation}
	Fix some $\epsilon>0$ and define the neighborhood $U_A\equiv \cup_{x\in S_0}U(A, \epsilon, x)$ and likewise 
	$U_B\equiv \cup_{x\in S_0}U(B, \epsilon, x)$. Then 
	\[
		U\equiv \paren{U_B-\tfrac{\lambda}{1-\lambda}g}\times \paren{U_A-\tfrac{\lambda}{1-\lambda}g} \times U_A\times U_B
	\]
	is a neighborhood of $S\equiv S_1\times S_2 \times S_3\times S_4$. 		
	Suppose that, for $\Lambda\subseteq W(\zetabar)$ satisfying $S\subset\Lambda$ with  
	$\mmap{T_{\zetabar}}{\Lambda}{\Lambda}$, the following hold for all 
	$\ubar=\paren{\ubar_1,\ubar_2,\ubar_3,\ubar_4} \in S$:
	\begin{enumerate}[(i)]
		\item\label{ass:2} 
			for all $(\ubar_3,\ubar_4) \in S_3\times S_4$, the collection of sets 
			$\klam{A,B }$ is subtransversal 
			at $(\ubar_3,\ubar_4)$ for $(\ubar_3,\ubar_4)-\Pi (\ubar_3,\ubar_4)$ relative to 
			$\Lambda'\equiv \set{u=(u_1,u_2)\in \Euclid^2}{\paren{u_2-\frac{\lambda}{1-\lambda}g,u_1-\frac{\lambda}{1-\lambda}g,u_1,u_2}\in \Lambda}$ 
			with constant $\kappa$ on the neighborhood $U_A\times U_B$;
		\item\label{ass:3} for $\Phi_\zetabar\equiv T_\zetabar -\Id$ and 
			$\Psi_g\equiv P_{\Omega_g}\Pi-\Pi$	
			there exists a positive constant $\sigma$ such that 
			\begin{align}\label{eq: additional ineq}
				\dist\paren{\zetabar, \Psi_{g}(u)}\leq \sigma \dist\paren{0,\Phi_{\zetabar}(u)}
			\end{align}
			holds whenever $u \in \tilde{\Lambda}\cap U$ with $u_1\in B-\frac{\lambda}{1-\lambda}g$
			and 
			\[\tilde{\Lambda}\equiv \set{u\in \Lambda}{u=\paren{x_2-\frac{\lambda}{1-\lambda}g,x_1-\frac{\lambda}{1-\lambda}g,x_1,x_2} \text{for some }x_1,x_2\in \Euclid}.\]
	\end{enumerate}
	Then there exists an $\epsilon'\leq \epsilon$ and a neighborhood $U'\subset U$ 
	($U'=U'_1\times U'_2\times U'_3\times U'_4\subset \Euclid^4$) of $S$ on 
	which the sequence $\paren{u^k}_{k \in \NN}$ generated by  $u^{k+1}\in T_{\zetabar}u^k$ 
    seeded by a point $u^0 \in W\paren{\zetabar}\cap U'$ 
	with $u^0_1\in U_1'\cap \paren{B-\tfrac{\lambda}{1-\lambda}g}$ satisfies
	\begin{align*}
		\dist\paren{u^{k+1}, \Fix T_{\zetabar}\cap S}\leq c \dist \paren{u^k, S}\quad \paren{\forall k \in \NN}
	\end{align*}
        for
	\begin{align}\label{eq:82}
		c \equiv \sqrt{1+\epsilon'-\frac{1}{\bar{\kappa}^2}}<1
	\end{align}
	where $\bar{\kappa}=\kappa\sigma$ with $\kappa$ and $\sigma$ given by \eqref{ass:2} and \eqref{ass:3}.
	Consequently, 	$\dist\paren{u^k, \tilde{u}}\rightarrow 0$ for 
	some $\tilde{u}\in \Fix T_{\zetabar}\cap S$, and hence 
	\[\dist\paren{u^k_1, \tilde{u}_1}\rightarrow 0\] at least R-linearly with rate $c<1$.
	If $\Fix \raar \cap S_1$ is a singleton, then convergence is Q-linear.
\end{thm}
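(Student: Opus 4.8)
The plan is to apply the abstract convergence result \cref{thm:2.18} to the lifted mapping $T_{\zetabar}$ on $\Lambda\subseteq W(\zetabar)$. By \cref{prep metric subreg}\eqref{item:1}, $T_{\zetabar}$ maps $W(\zetabar)$ into itself and its fixed points are exactly those $u\in W(\zetabar)$ with $u_1\in\Fix\raar$, so that $S\subseteq\Fix T_{\zetabar}\cap\Lambda$ is nonempty; moreover, the difference vector $\zetabar$ on which $T_{\zetabar}$ depends is well defined (in fact unique) by \cref{lem: uniqueness of zeta}. It then remains to verify the two hypotheses of \cref{thm:2.18} for $T_{\zetabar}$, namely pointwise almost averagedness at the points of $S$ (hypothesis \eqref{ass:a}) and metric subregularity of $\Phi_{\zetabar}=T_{\zetabar}-\Id$ (hypothesis \eqref{ass:b}), with constants meeting \eqref{eq:kappa-epsilon}, and then to conclude.

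First I would lift the almost averagedness of $\raar$ to $T_{\zetabar}$. On $W(\zetabar)$ every point satisfies $u-\Pi u=\zetabar$, so for two points $u,v\in W(\zetabar)$ the constant $\zetabar$-shifts cancel in all coordinate differences; writing $u_1^+\in\raar u_1$ and $v_1^+\in\raar v_1$, one obtains the three identities $\norm{u-v}^2=4\norm{u_1-v_1}^2$, $\norm{T_{\zetabar}u-T_{\zetabar}v}^2=4\norm{u_1^+-v_1^+}^2$, and $\norm{(u-T_{\zetabar}u)-(v-T_{\zetabar}v)}^2=4\norm{(u_1-u_1^+)-(v_1-v_1^+)}^2$. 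By \cref{thm: singlevalued at fixed points}, $\raar$ is pointwise almost firmly nonexpansive (i.e. almost averaged with $\alpha=1/2$) at each $\xbar\in S_0$ with arbitrarily small violation on the neighborhoods $U(A,\epsilon,\xbar)$, $U(B,\epsilon,\xbar)$. Substituting the three identities above into the inequality characterization of almost averagedness from \cref{prop: characterization of aa mappings} (with $\alpha=1/2$) shows at once that $T_{\zetabar}$ is pointwise almost averaged at every $\ubar\in S$ with averaging constant $\alpha=1/2$ and the \emph{same} violation $\epsilon'$, which discharges \eqref{ass:a}.

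Next I would establish \eqref{ass:b}. Assumption \eqref{ass:2} gives subtransversality of $\klam{A,B}$; applying \cref{lem: addition perserves subtransversality} in the reordered form recorded in \cref{rem:capser} lifts this to subtransversality of the four-set collection $\klam{B-\tfrac{\lambda}{1-\lambda}g,\,A-\tfrac{\lambda}{1-\lambda}g,\,A,\,B}$ at $\ubar$ for $\zetabar$ relative to $\Lambda$, with the \emph{same} modulus $\kappa$. Combined with the technical bound \eqref{ass:3}, \cref{prop:metric subreg} then yields that $\Phi_{\zetabar}$ is metrically subregular for $0$ relative to $\Lambda\cap\mathcal{N}$ with constant $\bar{\kappa}=\kappa\sigma$; by \cref{e:Ncal} the neighborhoods produced by \cref{thm: singlevalued at fixed points} already lie inside $\mathcal{N}$, so $\mathcal{N}$ drops out and subregularity holds relative to $\Lambda$ on the relevant neighborhood.

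The decisive step, and the main obstacle, is matching the constants against the violation on a \emph{common} neighborhood. The moduli $\kappa$ and $\sigma$ are fixed by the geometry of the sets, hence so is $\bar{\kappa}=\kappa\sigma$, whereas the uniformity in \cref{thm: singlevalued at fixed points}, sharpened by \cref{th:JEFF the Brotherhood}, lets us shrink the averagedness neighborhoods so as to force $\epsilon'\le\epsilon$ to be arbitrarily small. The genuine work is to use \cref{th:JEFF the Brotherhood}, uniformly over all $\xbar\in S_0$, to nest the neighborhoods $U(A,\epsilon',\xbar)\subseteq U(A,\epsilon,\xbar)$ and $U(B,\epsilon',\xbar)\subseteq U(B,\epsilon,\xbar)$ while keeping the violation at $\epsilon'$, producing a single refined product neighborhood $U'\subseteq U$ of $S$ on which both the averagedness of \eqref{ass:a} and the subregularity of \eqref{ass:b} hold simultaneously. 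Choosing $\epsilon'<1/\bar{\kappa}^2$ then makes \eqref{eq:kappa-epsilon} hold (with $\alpha=1/2$ one has $\sqrt{(1-\alpha)/(\epsilon'\alpha)}=1/\sqrt{\epsilon'}$), so that $c=\sqrt{1+\epsilon'-1/\bar{\kappa}^2}<1$ as in \eqref{eq:82}. With both hypotheses of \cref{thm:2.18} in force on $U'$, its conclusion gives $\dist(u^{k+1},\Fix T_{\zetabar}\cap S)\le c\,\dist(u^k,S)$ and R-linear convergence of $u^k$ to some $\tilde{u}\in\Fix T_{\zetabar}\cap S$. Since the coordinate projection is $1$-Lipschitz, \cref{prep metric subreg}\eqref{item:1} then yields R-linear convergence of $u^k_1$ to $\tilde{u}_1\in\Fix\raar$, with Q-linear convergence when $\Fix\raar\cap S_1$ is a singleton.
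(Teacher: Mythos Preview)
Your proposal is correct and follows essentially the same route as the paper: lift the pointwise almost firm nonexpansiveness of $\raar$ (from \cref{thm: singlevalued at fixed points}) to $T_{\zetabar}$ on $W(\zetabar)$, use \cref{lem: addition perserves subtransversality}/\cref{rem:capser} together with assumption \eqref{ass:3} and \cref{prop:metric subreg} to obtain metric subregularity of $\Phi_{\zetabar}$ with modulus $\bar\kappa=\kappa\sigma$, and then invoke \cref{th:JEFF the Brotherhood} to shrink the neighborhoods so that $\epsilon'<1/\bar\kappa^2$ and \cref{thm:2.18} applies. Your explicit computation of the three norm identities on $W(\zetabar)$ is a slightly more detailed version of what the paper condenses into the single remark that ``$T_{\zetabar}$ is just $\raar$ shifted by $\zetabar$ on the product space''; the only small slip is that the subregularity is relative to $\tilde\Lambda$ (not $\Lambda$) in keeping with \cref{prop:metric subreg} and assumption \eqref{ass:3}, and the paper justifies $U\cap\tilde\Lambda\subset\mathcal N$ via \cref{thm: singlevalued at fixed points}\eqref{ass 2} rather than \cref{e:Ncal}.
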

\begin{rem}[atlas for the assumptions]
	At first sight the assumptions in \cref{thm:conv raar} might seem overwhelming. 
	To provide some insight into the statement we discuss 
	the most important parts of the setting.
	\begin{enumerate}
		\item The assumptions of \cref{thm: singlevalued at fixed points} are needed to conclude 
		         almost averagedness of $\raar$.
		\item The requirement that the assumptions of \cref{thm: singlevalued at fixed points} hold at all 
			fixed points with the same gap vector is achieved by restricting our analysis
			to the set $S_0$. This also implies that we are considering only fixed points that 
			are {\em isolated relative to $\Lambda$}.
		\item Although we were not able to prove metric subregularity for a mapping related to
			$\raar$ directly, we can show this property for $T_\zetabar$ on $\Euclid^4$. In particular,
			assumptions \eqref{ass:2} and \eqref{ass:3} are used to guarantee metric subregularity
			from \cref{prop:metric subreg}.  Assumption \eqref{ass:2} guarantees subtransversality 
			of the collection
			$\klam{B-\frac{\lambda}{1-\lambda}g, A-\frac{\lambda}{1-\lambda}g, A , B}$ 
			since we have seen in 
			\cref{lem: addition perserves subtransversality} that subtransversality
			is preserved under the addition of some constant vector, here $\frac{\lambda}{1-\lambda}g$.
		\item The definitions of $\Lambda'$ and $\tilde{\Lambda}$ relate to the
			construction of the lifted product space version of the problem.
		\item The violation $\epsilon$ depends on the violations in 
			\cref{defn: super-reg+} as seen in \cref{thm: singlevalued at fixed points}. Thus,
			fixing some violation $\epsilon$ corresponds to certain choices of
			neighborhoods $U(A,\epsilon,\xbar)$ and $U(B, \epsilon, \xbar)$ and 
			violations $\epsilon_A$ and $\epsilon_B$ of \eqref{eq: epsilon subregularity}
			for the sets $A$ and $B$ respectively.
	\end{enumerate}
\end{rem}
\begin{proof}[Proof of \cref{thm:conv raar}]
	First, note that $U$ is a neighborhood of $S$ since $U_A\times U_B$ is a neighborhood of
	$S_3 \times S_4$, since for every $(u,\tilde{u})\in S_3\times S_4$ there exist $x,\tilde{x}\in S_0$
	such that $U(A, \epsilon, x)\times U(B, \epsilon, \tilde{x})\subset U_A\times U_B$ 
	is a neighborhood of $(u,\tilde{u})$.

	The neighborhood $U$ can be replaced by an enlargement of $S$, hence the result follows from \cref{thm:2.18}
	once it can be shown that the assumptions are satisfied for the mapping
	$T_{\zetabar}$ on the product space $\Euclid^4$ restricted to $\tilde{\Lambda}$. 
	
	To do so, we note that $\raar$ is almost firmly nonexpansive at each $\tilde{y}\in S_1$
	on $U_B$ by \cref{thm: singlevalued at fixed points} since the assumptions
	\eqref{ass B}-\eqref{ass 2} of \cref{thm: singlevalued at fixed points} are satisfied.
	Thus, $\alpha=1/2$.
	Likewise the violation is given by
	$\epsilon$ on $U_B$.
	Since $T_{\zetabar}$ is just $\raar$ shifted by $\zetabar$ 
	on the product space, it follows that $T_{\zetabar}$ is pointwise almost averaged 
	at $y \in S\equiv S_1\times S_2\times S_3\times S_4$ with the same violation 
	$\epsilon$ and averaging constant $\alpha=1/2$ on $U$.
	
	By \cref{lem: addition perserves subtransversality} and \cref{rem:capser} therefore, 
	assumption \eqref{ass:2} implies that 
	for $\ubar =(\ubar_1, \ubar_2, \ubar_3,\ubar_4) \in S$, the collection of sets 
	\[\klam{B-\frac{\lambda}{1-\lambda}g,A-\frac{\lambda}{1-\lambda}g,A,B }\] is subtransversal 
	at $\ubar$ for $\bar{\zeta }\equiv \ubar-\Pi \ubar$ relative to $\tilde{\Lambda}$ with constant 
	$\kappa$ on the neighborhood $U$, hence \cref{thm:2.18}\eqref{ass:a} is satisfied. 
	Moreover, assumption
	\eqref{ass:3} of \cref{thm:conv raar} and \cref{prop:metric subreg} with 
	$U\cap\tilde{\Lambda}\subset\mathcal{N}\equiv \set{z \in \Euclid^4}{P_A(z_4+\tfrac{\lambda}{1-\lambda}g)=z_3}$ 
        by \cref{thm: singlevalued at fixed points}\eqref{ass 2} yield assumption \cref{thm:2.18}\eqref{ass:b}.
        In total, the assumptions of \cref{thm:2.18} are all satisfied
	for $T_\zetabar$ on $\Euclid^4$ restricted to $\tilde{\Lambda}$, 
	and thus we conclude that \eqref{e:iterates} holds.  

	What remains is to show 
        that \eqref{eq:kappa-epsilon} holds, which would imply at least R-linear convergence.
	To achieve this choose some $\epsilon'>0$ with $\epsilon'<\epsilon$
	such that \eqref{eq:kappa-epsilon} is satisfied.
	By \cref{th:JEFF the Brotherhood} we can always find neighborhoods $U(B,\epsilon',x)\subset U(B,\epsilon,x)$ and
	$U(A,\epsilon',x)\subset U(A,\epsilon,x)$ for all $x\in S_0$ that satisfy the assumptions of
	\cref{thm: singlevalued at fixed points}. Following the constructions above
	we define $U_A'\equiv \cup_{x\in S_0}U(A, \epsilon', x)$ and 
	$U_B'\equiv \cup_{x\in S_0}U(B, \epsilon', x)$ and get $U_A'\subset U_A$ as well
	as $U_B'\subset U_B$. Thus, all the properties that we have shown to be true on $U$
	also hold on the subset $U'$ defined by
	\[
		U'\equiv \paren{U_B'-\tfrac{\lambda}{1-\lambda}g}\times \paren{U_A'-\tfrac{\lambda}{1-\lambda}g} \times U_A'\times U_B'.
	\]
	In particular, the constants $\kappa$ and $\sigma$ in \eqref{ass:2} and \eqref{ass:3} also suffice for the smaller 
	neighborhoods $U'_A\times U'_B$ and $U'$.  
	As a consequence, the assumptions of \cref{thm:2.18} are all satisfied 
	and \eqref{eq:kappa-epsilon} holds which implies at least R-linear 
	convergence to $\tilde{u}$. Since $\tilde{u}_1\in \Fix \raar \cap S_1$, this completes the proof.
\end{proof}

\begin{rem}[a closer look at the convergence statement]
The gap vector $g$ and difference vector $\zetabar$ in \cref{thm:conv raar} rely on the
structure of the intersection of the sets $A$ and $B$. The consistent case, that is $A\cap B\neq \emptyset$,
leads to a simplification of the problem. Here, the gap is $0$. Similarly the related difference vector
is of the form $\zetabar=\klam{0,0,0,0}$. Hence, the assumptions which involve at least one of these vectors 
can be simplified.
When the intersection $A\cap B$ is empty, namely the inconsistent case, the value of $\zetabar$ is dependent
on the choice of $\lambda$. We distinguish three important cases.
\begin{enumerate}
	\item $\lambda=\frac{1}{2}$. Here $\frac{\lambda}{1-\lambda}$ reduces to 1. As a result the phantom sets
		are shifted by the entire gap $g$ such that $A$ and $B-g$ have a common point. The
		difference vector is of the form $\zetabar=\klam{g, -g,-g,g}$.
	\item $\lambda\rightarrow 1$. Then $\frac{\lambda}{1-\lambda}\to +\infty$. 
		That is, the phantom sets recede to the horizon in the direction $-g$.
 	\item $\lambda \rightarrow 0$. In this case $\frac{\lambda}{1-\lambda}$ converges to $0$ and 
		the phantom sets coincide in the limit with the original ones. So, $\Omega_{u,g}=B\times A\times A \times B$. 
		Cyclic projections on these sets $\klam{B,A,A,B}$ in the given order is nothing more 
		than alternating projections between the sets $A$ and $B$.  At $\lambda=0$, however, 
		$\Fix \raar=B$, which is clearly larger than the fixed point set for alternating projections. 
\end{enumerate}
\end{rem}
Although the individual assumptions can be challenging to prove, as we will see in \cref{sec:examples},
they can reduce to a simpler form if we consider a convex and consistent setting.
The reason for this is twofold. First, subtransversality at points in the intersection is
nothing more than {\em local linear regularity} of the collection of sets, \cite[Proposition 3.3]{LukNguTam17}. 
Second, it was shown
that local linear regularity is equivalent to the global property of linear regularity
in the setting of closed convex sets, as seen in \cite[Theorem 6.1]{BakDeuLi2005}. Thus, assuming
\eqref{eq:ass22} locally for closed and convex sets implies that this property holds globally.
To prove this statement we will first present the auxiliary statements, which are essential 
to show the global convergence result.
\begin{propn}[subtransversality at common points]\cite[Proposition 3.3]{LukNguTam17}\label{prop:lin reg}
	Let $\Euclid^m$ be endowed with the 2-norm, that is, 
	$\norm{\paren{x_1,\dots, x_m}}_2=\paren{\sum_{j=1}^{m}\norm{x_j}_{\Euclid}^2}^{1/2}$. 
	A collection $\klam{\Omega_1, \Omega_2, \dots, \Omega_m}$ of nonempty and closed 
	subsets of $\Euclid$ is	subtransversal relative to 
	\[\Lambda\equiv \set{x=\paren{u,u, \dots, u}\in \Euclid^m}{| u \in \Euclid}\]
	at $\xbar=\paren{\ubar, \ubar, \dots, \ubar}$ with $\ubar \in \cap_{j=1}^m \Omega_j$ for
	$\ybar=0$ with constant $\kappa$ if there exist a neighborhood $U'$ of $\ubar$ together with a 
	constant $\kappa'$ satisfying $\sqrt{m}\kappa'\leq \kappa$ such that
	\begin{align}\label{eq:67}
		\dist\paren{u, \cap_{j=1}^m\Omega_j}\leq \kappa'\max_{j=1, \dots, m}\dist\paren{u, \Omega_i}, \quad \forall \ u \in U'.
	\end{align}
	Conversely, if $\klam{\Omega_1, \Omega_2, \dots, \Omega_m}$ is subtransversal relative 
	to $\Lambda$ at $\xbar$ for $\ybar=0$ with constant $\kappa$, then \eqref{eq:67} is satisfied 
	with any constant $\kappa'$ for which $\kappa\leq \kappa'$.
\end{propn}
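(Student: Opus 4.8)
The plan is to unravel \cref{def:3.6} on the diagonal $\Lambda$ and show that the metric subregularity estimate \eqref{eq:45} for the permuted projection residual $\Psi$ collapses to a comparison of $\dist(u,\cap_{j=1}^m\Omega_j)$ with the $\ell^2$-aggregate $\paren{\sum_{j=1}^m\dist(u,\Omega_j)^2}^{1/2}$; the claimed relationship with \eqref{eq:67} then follows purely from the equivalence of the $\ell^2$- and $\ell^\infty$-norms on $\Euclid^m$, which is exactly where the factor $\sqrt{m}$ enters. First I would observe that the cyclic permutation $\Pi$ fixes every point of $\Lambda$: if $x=\paren{u,u,\dots,u}$ then $\Pi x=x$, so that $\Psi(x)=P_\Omega(x)-x$ with $\Omega=\Omega_1\times\cdots\times\Omega_m$.

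Next I would evaluate both quantities of \eqref{eq:45} at such an $x$ for $\ybar=0$. Since $P_\Omega(x)=P_{\Omega_1}(u)\times\cdots\times P_{\Omega_m}(u)$ and every selection $p_j\in P_{\Omega_j}(u)$ satisfies $\norm{p_j-u}=\dist(u,\Omega_j)$, every element of the set $\Psi(x)$ has the \emph{same} Euclidean norm, whence $\dist\paren{0,\Psi(x)}=\paren{\textstyle\sum_{j=1}^m\dist(u,\Omega_j)^2}^{1/2}$. For the left-hand side I would identify $\Psi^{-1}(0)\cap\Lambda$: a diagonal point $\paren{v,\dots,v}$ lies in $\Psi^{-1}(0)$ if and only if $v$ is its own projection onto each $\Omega_j$, that is, $v\in\cap_{j=1}^m\Omega_j$, so that $\Psi^{-1}(0)\cap\Lambda=\set{\paren{v,\dots,v}}{v\in\cap_{j=1}^m\Omega_j}$. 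Measuring in the $2$-norm then gives $\dist\paren{x,\Psi^{-1}(0)\cap\Lambda}=\sqrt{m}\,\dist\paren{u,\cap_{j=1}^m\Omega_j}$. Consequently, subtransversality relative to $\Lambda$ at $\xbar$ for $\ybar=0$ with constant $\kappa$ on a neighborhood $U$ is precisely the assertion that
\[
	\sqrt{m}\,\dist\paren{u,\cap_{j=1}^m\Omega_j}\leq\kappa\paren{\textstyle\sum_{j=1}^m\dist(u,\Omega_j)^2}^{1/2}
\]
holds for all $u$ in the diagonal slice $U'\equiv\set{u}{\paren{u,\dots,u}\in U}$, a neighborhood of $\ubar$.

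It then remains only to pass between the $\ell^2$-aggregate and $\max_j\dist(u,\Omega_j)$ via $\max_j\dist(u,\Omega_j)\leq\paren{\sum_j\dist(u,\Omega_j)^2}^{1/2}\leq\sqrt{m}\max_j\dist(u,\Omega_j)$. For the sufficiency direction, if \eqref{eq:67} holds with $\kappa'$ on $U'$ and $\sqrt{m}\kappa'\leq\kappa$, the first of these bounds upgrades \eqref{eq:67} to the displayed subtransversality inequality after multiplying through by $\sqrt{m}$, taking $U\equiv(U')^m$. For the converse, the second bound turns the subtransversality inequality into \eqref{eq:67} with $\kappa'=\kappa$, hence with any $\kappa'\geq\kappa$, with $U'$ the diagonal slice of $U$. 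I expect the only genuine care to be bookkeeping of the set-valued projection and the permutation on the product space — in particular verifying that $\Pi$ restricts to the identity on $\Lambda$ and that all elements of $\Psi(x)$ share a common norm so that $\dist(0,\Psi(x))$ is unambiguous — rather than any hard estimate; the constants are dictated entirely by the $\sqrt{m}$ in the norm comparison.
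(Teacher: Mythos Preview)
Your argument is correct and is exactly the natural derivation: restrict to the diagonal so that $\Pi$ acts trivially, identify $\dist(0,\Psi(x))$ with the $\ell^2$-aggregate of the individual distances and $\dist(x,\Psi^{-1}(0)\cap\Lambda)$ with $\sqrt{m}\,\dist(u,\cap_j\Omega_j)$, and then pass between the $\ell^2$- and $\ell^\infty$-aggregates via the standard $\sqrt{m}$ equivalence. Note, however, that the present paper does not actually supply a proof of this proposition; it is quoted verbatim from \cite[Proposition~3.3]{LukNguTam17}, so there is no in-paper argument to compare against --- your write-up is precisely the computation that underlies the cited result.
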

The property in \eqref{eq:67} is called {\em local linear regularity}. 
If the inequality holds for all $u \in \Euclid$, the collection $\klam{A, B}$
is said to be {\em linearly regular}.
Bakan, Deutsch and Li showed in \cite{BakDeuLi2005} the equivalence of both properties
when the sets are closed and convex.
\begin{lem}\label{lem:cvx local lin reg}\cite[Theorem 6.1]{BakDeuLi2005}
	Let the sets $A$ and $B$ be nonempty closed convex sets with nonempty 
	intersection, i.e. $A\cap B\neq \emptyset$. Then the following are equivalent.
	\begin{enumerate}[(i)]
		\item There is a $\delta>0$ such that the collection of sets is locally linearly
			regular at $\xbar\in A\cap B$ on $\Ball_\delta(\xbar)$.
		\item The collection of sets is linearly regular at $\xbar\in A\cap B$.
	\end{enumerate}
\end{lem}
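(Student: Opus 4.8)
The implication (ii) $\Rightarrow$ (i) is immediate: if \eqref{eq:67} holds for every $u\in\Euclid$ with some constant $\kappa'$, then in particular it holds for all $u\in\Ball_\delta(\xbar)$, for any $\delta>0$. The substance is the converse (i) $\Rightarrow$ (ii), and the plan is a homogenization argument in which convexity and $A\cap B\neq\emptyset$ are both essential. After translating so that $\xbar=0\in A\cap B$, write $f(u)\equiv\dist(u,A\cap B)$ and $g(u)\equiv\max\klam{\dist(u,A),\dist(u,B)}$; these are finite, convex, $1$-Lipschitz, nonnegative functions that vanish precisely on $A\cap B$, so in particular $f(0)=g(0)=0$. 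Hypothesis (i) reads $f\leq\kappa' g$ on $\Ball_\delta(0)$, and the goal is to promote this to $f\leq\kappa'' g$ on all of $\Euclid$.

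First I would record the scaling behaviour forced by convexity together with $f(0)=g(0)=0$: for fixed $u$ the quotients $t\mapsto f(tu)/t$ and $t\mapsto g(tu)/t$ are nondecreasing on $(0,\infty)$, since $\tfrac1t A$, $\tfrac1t B$ and $\tfrac1t(A\cap B)=\tfrac1t A\cap\tfrac1t B$ are nested as $t$ varies. Letting $t\downarrow0$ turns the \emph{local} hypothesis into a \emph{conical} one: for any closed convex $C\ni0$ one has $\tfrac1s\dist(sv,C)\to\dist\paren{v,\mathcal{T}_C(0)}$ as $s\downarrow0$, where $\mathcal{T}_C(0)=\overline{\cone(C)}$ is the tangent cone; dividing $f(sv)\leq\kappa' g(sv)$ (valid for small $s$, since then $sv\in\Ball_\delta(0)$) by $s$ and sending $s\downarrow0$ yields
\begin{equation*}
	\dist\paren{v,\mathcal{T}_{A\cap B}(0)}\leq\kappa'\max\klam{\dist\paren{v,\mathcal{T}_A(0)},\dist\paren{v,\mathcal{T}_B(0)}}\qquad(\forall v\in\Euclid).
\end{equation*}
As tangent cones are scale-invariant, this conical inequality holds globally and controls the near-field behaviour; the far-field behaviour is governed by the recession cones $A_\infty,B_\infty$, for which the identity $(A\cap B)_\infty=A_\infty\cap B_\infty$ holds because $A,B$ are closed, convex, and $A\cap B\neq\emptyset$.

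The obstacle is precisely the far-field: convexity lets one scale points of $A\cap B$ toward $0$ but not away from it, so there is no direct upper bound on $f(u)$ when $u$ lies far out and $A\cap B$ is unbounded. The resolution, which is the heart of \cite[Theorem~6.1]{BakDeuLi2005}, is that convexity rigidly propagates the local ``angle'' between $A$ and $B$: since $A\cap B$ is itself convex, the two sets cannot meet transversally (good regularity) at $\xbar$ and tangentially (failed regularity) at another common point. Formally, local linear regularity at $\xbar$ forces the strong conical hull intersection property $\ncone{A\cap B}(\xbar)=\ncone{A}(\xbar)+\ncone{B}(\xbar)$, and this same rigidity transfers the regularity constant to the recession pair $\klam{A_\infty,B_\infty}$. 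Combining the uniform near-field constant from the tangent-cone inequality with the far-field constant from the recession cones produces a single $\kappa''$ for which \eqref{eq:67} holds at every $u\in\Euclid$, i.e.\ global linear regularity. The main difficulty is exactly this local-to-global passage, and it is where the hypotheses that $A,B$ be convex and $A\cap B\neq\emptyset$ cannot be dispensed with.
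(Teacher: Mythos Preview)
The paper does not prove this lemma; it is stated with a citation to \cite[Theorem~6.1]{BakDeuLi2005} and used as a black box in the proof of \cref{cor: local cvgz in cvx setting}. So there is no ``paper's own proof'' to compare against.

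As for your sketch: the trivial direction and the scaling observations are correct --- for closed convex $C\ni 0$ the map $t\mapsto \dist(tu,C)/t$ is nondecreasing, and the limit $s\downarrow 0$ indeed yields the tangent-cone inequality you wrote down. You also correctly identify why a naive homothety cannot finish the job: monotonicity of $f(tu)/t$ goes the wrong way for the far-field bound. Where your sketch is thin is precisely the passage you label ``the heart'' of the argument: you assert that local linear regularity at $\xbar$ forces strong CHIP and that this ``rigidity transfers the regularity constant to the recession pair $\{A_\infty,B_\infty\}$,'' but you do not indicate how one actually obtains a finite global constant from the tangent-cone estimate and the recession-cone identity. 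In \cite{BakDeuLi2005} this is done through a chain of equivalences (strong CHIP, normality, linear regularity) rather than by splicing a near-field and far-field bound directly; your final paragraph gestures at that machinery without supplying any of it. If you intend this as a genuine proof rather than a pointer to the reference, the step from the conical inequality to a uniform global $\kappa''$ needs to be made explicit --- as written it is a plausible outline, not an argument.
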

Having \cref{prop:lin reg} and \cref{lem:cvx local lin reg} we are now
ready to state a global convergence result for closed convex sets. 
\begin{cor}[global convergence in the consistent and convex setting]\label{cor: local cvgz in cvx setting}
	Let $\lambda \in (0,1)$, and let $\xbar$ be a point in $\Fix \raar$. Moreover, let both $A$ and $B$
	be closed and convex with nonempty intersection, i.e. $A\cap B\neq \emptyset$
	and therefore $\Fix \raar=A\cap B$. 
	Then $\klam{g}=P_B(\xbar)-P_A(P_B(\xbar))=0$ and $\klam{\zetabar}=\Zcal(\xbar,g)=\klam{0}$
	($\zetabar=\paren{\zetabar_1,\zetabar_2,\zetabar_3,\zetabar_4}\in \Euclid^4$).
	Define the set
	\begin{align*}
		S_0\equiv \Fix\raar=A\cap B.
	\end{align*}
	Suppose that, the following hold for all $\ubar=(\ubar_1,\ubar_2) \in S\equiv S_0\times S_0$:
	\begin{enumerate}[(i)]
		\item\label{eq:ass22} the collection of sets 
			$\klam{A,B }$ is subtransversal 
			at $(\ubar_1,\ubar_2)$ for $(\ubar_1,\ubar_2)-\Pi (\ubar_1,\ubar_2)$ relative to 
			$\Lambda'\subset\set{u\in \Euclid^2}{u_1=u_2}$
			with constant $\kappa$ on some neighborhood $U' \subset \Euclid^2$ ($U'= U_A\times U_B$).
	\end{enumerate}
	Then the sequence $\paren{x^k}_{k\in \NN}$ generated by $x^{k+1}\in \raar(x^k)$ seeded by a point 
	$x^0\in \Lambda'\cap U_B$
	satisfies
	\begin{align*}
		\dist\paren{x^{k+1}, \Fix \raar}\leq c \dist \paren{x^k, \Fix \raar}\quad \paren{\forall k \in \NN}
	\end{align*}
	for  
	\begin{align*}
		c\equiv \sqrt{1-\frac{2\lambda^2}{\kappa^2}}<1
	\end{align*}
	with $\kappa$ by \eqref{eq:ass22}.
	Consequently, 	$\dist\paren{x^k, \tilde{x}}\rightarrow 0$ for 
	some $\tilde{x}\in \Fix \raar$ at least R-linearly with rate $c<1$. 
	If $\Fix \raar$ is a singleton, then convergence is Q-linear.
\end{cor}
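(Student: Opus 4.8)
The plan is to specialize \cref{thm:conv raar} to the convex, consistent case, where every hypothesis either follows from convexity or collapses to the single subtransversality assumption \eqref{eq:ass22}, and then to upgrade the \emph{local} subtransversality in \eqref{eq:ass22} to a \emph{global} statement using the convexity-specific equivalence of local and global linear regularity.

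First I would verify the regularity hypotheses. Since $A$ and $B$ are closed and convex, \cref{prop: cxv set is super-reg at dist} shows that each is super-regular at a distance relative to $\Euclid$ at every one of its points with violation $\epsilon=0$; hence the neighborhoods $U(A,\epsilon,x)$ and $U(B,\epsilon,x)$ of \cref{thm: singlevalued at fixed points} may be taken to be all of $\Euclid$, assumptions \eqref{ass B}--\eqref{ass 2} hold trivially, and \cref{thm: singlevalued at fixed points} yields that $\raar$ is firmly nonexpansive on $\Euclid$, i.e. pointwise almost averaged at every fixed point with averaging constant $\alpha=1/2$ and violation $\epsilon=0$. Because $A\cap B\neq\emptyset$, \cref{cor: fixed point and its gap} forces $g=0$ and $\{\zetabar\}=\{0\}$, so $T_{\zetabar}=T_0$ runs four simultaneous copies of $\raar$ and leaves the diagonal $W(0)=\set{u\in\Euclid^4}{u_1=u_2=u_3=u_4}$ invariant; identifying $x^k$ with $u^k_1$ reduces the lifted iteration on $\Euclid^4$ to the $\raar$-iteration on $\Euclid$.

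Next I would supply the metric-subregularity ingredient. Assumption \eqref{eq:ass22} is precisely hypothesis \eqref{ass:2} of \cref{thm:conv raar}, and since $g=0$ the technical hypothesis \eqref{ass:3} is automatic by \cref{propn:technical ass for consistent} with $\sigma=\tfrac{1}{\sqrt{2}\lambda}$. \cref{prop:metric subreg} then gives metric subregularity of $\Phi_0=T_0-\Id$ with modulus $\bar{\kappa}=\kappa\sigma=\tfrac{\kappa}{\sqrt{2}\lambda}$, the set $\mathcal{N}$ causing no trouble by \cref{e:Ncal}. Inserting $\epsilon=0$, $\alpha=1/2$ and $\bar{\kappa}=\tfrac{\kappa}{\sqrt{2}\lambda}$ into the rate of \cref{thm:2.18} produces
\[
	c=\sqrt{1-\tfrac{1}{\bar{\kappa}^2}}=\sqrt{1-\tfrac{2\lambda^2}{\kappa^2}},
\]
while condition \eqref{eq:kappa-epsilon} holds vacuously, its right-hand side being $+\infty$ when $\epsilon=0$. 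That the radicand is positive (so $c<1$) follows from \cref{prop:lin reg}: the subtransversality modulus satisfies $\kappa\ge\sqrt{2}\,\kappa'$ for some local-linear-regularity constant $\kappa'\ge1$, so $\kappa\ge\sqrt{2}$ and hence $\tfrac{2\lambda^2}{\kappa^2}\le\lambda^2<1$.

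The crux, and the step I expect to be the main obstacle, is turning the \emph{local} convergence delivered by \cref{thm:conv raar} into the claimed \emph{global} statement. Here I would invoke the chain advertised before the corollary: by \cref{prop:lin reg}, subtransversality of $\klam{A,B}$ relative to the diagonal at a common point is equivalent to local linear regularity of the collection, and by \cref{lem:cvx local lin reg} local linear regularity of two closed convex sets with nonempty intersection is equivalent to \emph{global} linear regularity. Thus the inequality underlying \eqref{eq:ass22} holds on all of $\Euclid$ with the same constant $\kappa$, so the neighborhoods $U_A,U_B$ — and with them the enlargements $S_{\gamma\bar{\delta}}$ in \cref{thm:2.18} — may be taken arbitrarily large. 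Since both pointwise almost averagedness (with $\epsilon=0$, $\alpha=1/2$) and metric subregularity now hold on every such enlargement, the hypotheses of \cref{thm:2.18} are met for all $\bar{\delta}>0$ rather than only for small $\bar{\delta}$, so the per-step contraction $\dist(x^{k+1},\Fix\raar)\le c\,\dist(x^k,\Fix\raar)$ is valid from any admissible seed $x^0$. R-linear convergence to some $\tilde x\in\Fix\raar$, and Q-linear convergence when $\Fix\raar$ is a singleton, then follow directly from the corresponding conclusions of \cref{thm:2.18}, completing the proof.
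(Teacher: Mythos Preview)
Your proposal is correct and follows essentially the same route as the paper: specialize \cref{thm:conv raar} by using convexity to get $\epsilon=0$, $\alpha=1/2$ and global neighborhoods via \cref{prop: cxv set is super-reg at dist} and \cref{thm: singlevalued at fixed points}, discharge hypothesis \eqref{ass:3} via \cref{propn:technical ass for consistent}, and then upgrade from local to global using the chain \cref{prop:lin reg}--\cref{lem:cvx local lin reg}. One small caveat: your justification of $c<1$ via ``$\kappa\ge\sqrt{2}\,\kappa'$'' misreads \cref{prop:lin reg}---the inequality $\sqrt{m}\,\kappa'\le\kappa$ is the \emph{sufficient} direction (linear regularity $\Rightarrow$ subtransversality), while the converse only gives $\kappa'\ge\kappa$, so $\kappa\ge\sqrt{2}$ does not follow; the paper itself simply leaves nonnegativity of the radicand implicit (and in any case one may enlarge $\kappa$, since metric subregularity with a smaller constant implies it with any larger one).
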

\begin{rem}[global convergence for convex sets]
	There are only two changes from \cref{thm:conv raar} to \cref{cor: local cvgz in cvx setting}.
	First, the sets are required to be convex. Thus, convergence in general is guaranteed 
	by Opial \cite{Opial1967} as noted in the beginning of this section. 
	Moreover, the local assumption \eqref{eq:ass22} in this case is a global one, i.e. 
	$U'=\Euclid^2$, by  \cref{prop:lin reg} and \cref{lem:cvx local lin reg}.
	The second difference, assumption  \eqref{ass:3} in \cref{thm:conv raar}, is
	always satisfied by \cref{propn:technical ass for consistent} since $\Fix \raar =A\cap B$.
\end{rem}
\begin{proof}[Proof of \cref{cor: local cvgz in cvx setting}]
	Since both $A$ and $B$ are convex, not only the difference vector is unique as seen in \cref{lem: uniqueness of zeta},
	but also the gap vector $g$ for any fixed point in $\Fix \raar$. Thus, $S_0=\Fix \raar$. 
	$\Fix \raar=A\cap B$ by \cref{thm:fixed points}. With these observations we get 
	immediately that the sets involved in \cref{thm:conv raar} simplify to the 
	following 
	\begin{align*}
		S &=S_0\times S_0 \times S_0 \times S_0,\\
		W(\zetabar) &= \set{u\in \Euclid^4}{u-\Pi u=0}=\set{u\in \Euclid^4}{u_1=u_2=u_3=u_4},\\
		U &= U_B \times U_A \times U_A\times U_B,\\
		\Lambda'&\subset\set{u\in \Euclid^2}{u_1=u_2},
	\end{align*}
	since $\Lambda\subset W(\zetabar)$. Thus, assuming \eqref{eq:ass22} in 
	\cref{cor: local cvgz in cvx setting} is equivalent to assuming \cref{thm:conv raar}\eqref{ass:2} 
	in the convex and consistent setting.	
	Moreover,
	since the sets $A$ and $ B$ are convex, both the projector and the reflector onto these sets are single-valued 
	(see for example \cite[Theorem 3.14]{BauCom11}). Additionally the projection is 
	firmly nonexpansive, \cite[Proposition 4.8]{BauCom11}, and thus the reflector nonexpansive,
	\cite[Proposition 4.2]{BauCom11}, which implies that $\raar$ is averaged with constant
	$\alpha=(1/2)$. The conditions of \cref{thm: singlevalued at fixed points}
	are therefore satisfied with neighborhoods chosen to be $\Euclid$. Also, since the sets $A$ and $B$
	are convex, they are super-regular at a distance by \cref{prop: cxv set is super-reg at dist}
	with $\epsilon=0$. 
	Since every fixed point is an element of the intersection $A\cap B$, we deduce
	by \cref{propn:technical ass for consistent} that assumption \eqref{ass:3} of 
	\cref{thm:conv raar} holds.	
	The local convergence result follows then from \cref{thm:conv raar}. What is left to show
	is the global convergence property. 
	
	By \eqref{eq:ass22} and \cref{prop:lin reg} the collection of sets $\klam{A,B}$ is locally
	linearly regular on $U'$. Thus, there exists a $\delta>0$ such that $\klam{A,B}$ is locally
	linearly regular on $\Ball_\delta(\xbar)$. Using \cref{lem:cvx local lin reg}
	we get that $\klam{A,B}$ is linearly regular since $A$ and $B$ are convex sets. In total,
	\eqref{eq:ass22} holds with $U'=\Euclid^2$. That is, the assumption holds globally. 
	Since \eqref{ass:3} of \cref{thm:conv raar} holds globally as well by 
	\cref{propn:technical ass for consistent}, the assumptions
	of the underlying convergence framework in \cref{thm:2.18} hold globally. Therefore, the 
	sequence converges globally, which completes the proof.
\end{proof}
\begin{rem}[linking our results to already existing literature] As noted in the introduction, 
the works \cite{HesseLuk2013, Phan16, LiPong16, LukNguTam17,min18} all 
analyze the Douglas-Rachford algorithm for consistent nonconvex feasibility.  In 
\cite{LukNguTam17} the framework used here was applied to Douglas-Rachford for structured 
nonconvex optimization. In \cite{min18} the authors showed local R-linear convergence for superregular sets
intersecting linear regular.
Our analysis of relaxed Douglas-Rachford includes or subsumes that of 
all previous studies in the context of set feasibility, with the exception of \cite{LiPong16}, 
which addresses global convergence guarantees for consistent 
feasibility.  The assumptions of that paper, namely compactness and semi-algebraicity (not 
to mention nonempty intersection) are different than the notions that we work with. 
Certainly compactness is a regularity assumption, as is semi-algebraicity or its more general 
Kurdyka-\L ojasiewicz-type regularity, 
but these notions serve a different purpose.  Indeed, even convex sets need not be 
semi-algebraic or compact.  This suggests that Kurdyka-\L ojasiewicz-type regularity 
and compactness could be 
properties {\em in addition} to the ones we use in order to arrive at global statements.  
Nevertheless, as shown in \cref{cor: local cvgz in cvx setting}, in the convex case, the local 
analysis suffices to infer global convergence properties.  
A more thorough study of the relationship between the different notions of regularity 
would be fruitful, but is beyond the scope of our paper. 

Our results could be extended to sets with even weaker regularity, namely $\epsilon$-subregular sets 
instead of super-regular sets at a distance under the additional assumption that suitable neighborhoods exist.
But, the present setting is technical enough - increased generality would have only made the details even more difficult to parse.  
Moreover,  the advantage of this specific type of nonconvexity is that we are not only able
	to present existence results on neighborhoods where we get local convergence,  but 
	we are able to construct the neighborhoods explicitly.
\end{rem}


\section{Elementary Examples}\label{sec:examples}
We demonstrate in this section explicit verification of the assumptions of Theorem \cref{thm:conv raar} for 
a typical class of problems.  In particular, we consider the configurations that arise with feasibility 
problems involving intersecting and nonintersecting {\em spheres}.  
This is of particular interest for the {\em source localization problem}
and the {\em phase retrieval problem}, especially the nonintersecting case.  The idealized source localization problem amounts to finding the 
intersection of spheres that are determined by distance measurements to receivers whose locations are 
known.  When the distance measurements are noisy, or the given locations of the receivers are inaccurate, 
the intersection over all spheres will be empty almost always.  For phase retrieval, the measurements are pointwise amplitude measurements
in the Fourier domain of an unknown object.  In other words, the constraint sets are two-dimensional spheres 
in the image of a linear transformation.  Since both the measurements and the object are assumed to 
have compact support, the phase retrieval problem in diffraction imaging is fundamentally inconsistent.
In our development below, we carry out the explicit calculations to 
verify the assumptions of Theorem \cref{thm:conv raar} for circles (spheres in $\mathbb{R}^2$) 
which was shown in \cite{LukSabTeb18} to be the fundamental geometry for phase retrieval and source location problems.  
Affine subspaces are 
included as spheres centered at infinity.  

There are 5 distinct cases to consider:
1. intersecting circles, 2. nonintersecting separable circles, 3. nonintersecting, nonseparable, nonconcentric circles, 4. nonintersecting concentric circles, and 5. tangential circles.  We show that the verification
can be carried out ``by hand'' in the first example.  For the sake of brevity, the verification is carried out in 
the remaining examples with the help of symbolic computation.  We were unable to prove or disprove
that the required conditions hold in Example 4.  In Example 5 we determine that the assumptions are 
{\em not} satisfied, and therefore the algorithm cannot converge linearly.  The symbolic worksheets where our calculations were
carried out are available at \url{http://vaopt.math.uni-goettingen.de/en/publications.php}.

For this entire section let $R$ be a positive real-valued number and $\lambda\in (0,1)$ if not specified. 
To verify the subtransversality and the technical condition \eqref{ass:3} in \cref{thm:conv raar}
we often did not calculate the constants explicitly but bounded them from below. That is,
\begin{align*}
	\kappa >&\frac{\dist\paren{u,\Psi_g^{-1}(\zetabar)\cap W(\zetabar)}}{\dist\paren{\zetabar,\Psi_g(u)}},\\
	\sigma >&\frac{\dist\paren{\zetabar, \Psi_{g}(u)}}{\dist\paren{0,\Phi_{\zetabar}(u)}},
\end{align*}
where $\kappa$ was the constants of subtransversality and $\sigma$ describes the technical condition.
In this subsection we will deal with neighborhoods of fixed points. As a consequence
the constants computed bound the rate of linear convergence from below in such cases.
Note that we can always find a neighborhood such that the convergence is linear for examples
consisting of two circles by \cref{thm: singlevalued at fixed points} and \cref{eg:1}.
\begin{eg}[two intersecting circles]\label{eg: 1. case}	
	The first example consists of two circles intersecting at exactly two points.  
	Without loss of generality we can restrict the analysis to the following setting
	\begin{align*}
		A\equiv &\set{x\in \Rbb^2}{ \norm{x}=1}\\
		B\equiv &\set{x\in \Rbb^2 }{ \norm{x-(0,a)}=R}, 
	\end{align*}
	where $a\in \Rbb \setminus \klam{0}$ and 
	$R \in \paren{\min_{y \in A}\dist\paren{\paren{0,a},y},\max_{y \in A}\dist\paren{\paren{0,a},y}}$.
	\begin{figure}[ht]
		\centering
		\begin{tikzpicture}
			\draw[thick,->] (-2,0) -- (2,0);
			\draw[thick,->] (0,-3) -- (0,1.5);
			\draw (0,0) circle (1cm);
			\draw (0.7,0.3) node {$A$};
			\draw (0,-1.5) circle (1cm);
			\draw (0.7,-1.2) node {$B$};
		\end{tikzpicture}
		\caption{\cref{eg: 1. case} for $a=-1.5$ and $R=1$}
	\end{figure}
	Note that the endpoints of the interval for $R$ correspond to the setting of two touching circles, 
	see \cref{eg: 5. case}.	

	First we consider the points in the intersection $A\cap B$, namely
	\begin{align*}
		\paren{\pm\sqrt{1-\paren{\frac{1-R^2+a^2}{2a}}^2},\frac{1-R^2+a^2}{2a}}.
	\end{align*}
	Due to the symmetry of the problem we restrict the analysis to the point 
	$\paren{+\sqrt{1-\paren{\frac{1-R^2+a^2}{2a}}^2},\frac{1-R^2+a^2}{2a}}$.
		
	The following statements regarding the assumptions made in \cref{thm:conv raar} are easily
	verified either by hand or with the help of symbolic computation.
	\begin{enumerate}[(i)]
		\item $S_0\equiv \klam{\paren{+\sqrt{1-\paren{\frac{1-R^2+a^2}{2a}}^2},\frac{1-R^2+a^2}{2a}}}\in \Fix \raar$
		\item In $\Rbb_+\times \Rbb$ there is a unique fixed point.
			$\xbar=\paren{\ubar,\ubar,\ubar,\ubar}$, where $\ubar =\paren{+\sqrt{1-\paren{\frac{1-R^2+a^2}{2a}}^2},\frac{1-R^2+a^2}{2a}}$.
		\item The difference vector is unique and given by $\zetabar=\paren{(0,0),(0,0),(0,0),(0,0)}$, since
			$\ubar\in A \cap B$.
		\item The sets $A$ and $B$ satisfy the assumptions of \cref{thm: singlevalued at fixed points}
			at $\ubar$ with neighborhoods $U_1$ and $U_2$ being open balls around $\ubar$, that is
			$\Ball_\delta(\ubar)$, for $\delta\in (0,1)$. This can be shown similar to \cref{eg:1}.
		\item This example considers a setting with nonempty intersection. By \cref{prop:lin reg} one can equivalently prove linear regularity to get subtransversality in such instances.
            Our aim is to use this statement to prove that \cref{eg: 1. case} satisfies \eqref{eq:67}.
			
			For this we select any $u=(u_1, u_2) \in U_1\cap A$ where $U_1=U_2$ and $u_1>0$. 
			Such a point exists since the statements in \cref{thm:conv raar} are all with respect 
			to the set $\Lambda$ which is a subset of $W(\bar{\zeta})$. Thus the restriction to one of the sets 
			is no contradiction. The condition $u_1>0$ ensures that we always project on the chosen point in the intersection: 
			$\paren{+\sqrt{1-\paren{\frac{1-R^2+a^2}{2a}}^2},\frac{1-R^2+a^2}{2a}}$.  Then the condition 
			\begin{align*}
				\dist\paren{u, A\cap B}\leq \kappa'\max\klam{\dist\paren{u, A},\dist\paren{u, B}},
			\end{align*}
			simplifies to
			\begin{align*}
				\dist\paren{u, A\cap B}\leq \kappa'\dist\paren{u, B},
			\end{align*}
			which we reformulate in the following to 
			\begin{align}\label{eq:linreg}
				\norm{u-P_{A\cap B}(u)}\leq \kappa'\norm{u-P_{B}(u)}.
			\end{align}
			Note that \eqref{eq:linreg} implies \eqref{eq:67} since $u\in A$ and thus 
			implies linear regularity.
	
			Next, we show \eqref{eq:linreg}.
			\begin{align*}
				\norm{u-P_{A\cap B}(u)}&\leq \norm{u-P_{B}(u)}+\norm{P_{B}(u)-P_{A\cap B}(u)}\\
				&\leq \norm{u-P_{B}(u)}\paren{1+\frac{\norm{P_{B}(u)-P_{A\cap B}(u)}}{\norm{u-P_{B}(u)}}}.
			\end{align*}
			What remains to show is that 
			$1+\frac{\norm{P_{B}(u)-P_{A\cap B}(u)}}{\norm{u-P_{B}(u)}}$ is bounded above
			by a nonnegative constant.
	
			By construction we get for the individual projections
			\begin{align*}
				P_{A\cap B}(u)&=\paren{+\sqrt{1-\paren{\frac{1-R^2+a^2}{2a}}^2},\frac{1-R^2+a^2}{2a}},\\
				P_{B}(u)&=\paren{0,a}+\frac{u-\paren{0,a}}{\norm{u-\paren{0,a}}}R.
			\end{align*}
			Inserting this in the above expression yields
			\begin{align*}
				1+\frac{\norm{P_{B}(u)-P_{A\cap B}(u)}}{\norm{u-P_{B}(u)}}&=1+\frac{\norm{\paren{0,a}+\frac{u-\paren{0,a}}{\norm{u-\paren{0,a}}}R-\paren{\sqrt{1-\paren{\frac{1-R^2+a^2}{2a}}^2},\frac{1-R^2+a^2}{2a}}}}{\norm{u-\paren{0,a}+\frac{u-\paren{0,a}}{\norm{u-\paren{0,a}}}R}}\\
				&\leq 2+\frac{\norm{u-\paren{\sqrt{1-\paren{\frac{1-R^2+a^2}{2a}}^2},\frac{1-R^2+a^2}{2a}}}}{\norm{u-\paren{0,a}+\frac{u-\paren{0,a}}{\norm{u-\paren{0,a}}}R}}\\
				&< 2+\frac{1}{\norm{u-\paren{0,a}+\frac{u-\paren{0,a}}{\norm{u-\paren{0,a}}}R}},
			\end{align*}
			since $u \in A$ and thus $\norm{u-\paren{\sqrt{1-\paren{\frac{1-R^2+a^2}{2a}}^2},\frac{1-R^2+a^2}{2a}}}<1$. 
			The denominator 
			$${\norm{u-\paren{0,a}+\frac{u-\paren{0,a}}{\norm{u-\paren{0,a}}}R}}$$ 
			can be bounded as follows.   Since $u \in A$ we have 
			\begin{align*}
				0<\min_{y \in A}\dist\paren{\paren{0,a},y}\leq \norm{u-\paren{0,a}}\leq\max_{y \in A}\dist\paren{\paren{0,a},y}.
			\end{align*}
			Equivalently
			\begin{align*}
				\frac{1}{\min_{y \in A}\dist\paren{\paren{0,a},y}}\geq \frac{1}{\norm{u-\paren{0,a}}}\geq\frac{1}{\max_{y \in A}\dist\paren{\paren{0,a},y}}.
			\end{align*}
			Thus,
			\begin{align*}
				1-\frac{R}{\norm{u-\paren{0,a}}}&\geq 1-\frac{R}{\max_{y \in A}\dist\paren{\paren{0,a},y}}, \quad \text{and}\\
				\norm{u-\paren{0,a}}\paren{1-\frac{R}{\norm{u-\paren{0,a}}}}&\geq \min_{y \in A}\dist\paren{\paren{0,a},y}\paren{1-\frac{R}{\max_{y \in A}\dist\paren{\paren{0,a},y}}}\\
				\Rightarrow \frac{1}{\norm{u-\paren{0,a}}\paren{1-\frac{R}{\norm{u-\paren{0,a}}}}}&\leq \frac{1}{\min_{y \in A}\dist\paren{\paren{0,a},y}\paren{1-\frac{R}{\max_{y \in A}\dist\paren{\paren{0,a},y}}}}=:\kappa'.
			\end{align*}
			Since $R \in \paren{\min_{y \in A}\dist\paren{\paren{0,a},y},\max_{y \in A}\dist\paren{\paren{0,a},y}}$, 
			$\kappa'$ is bounded above.
			
			In total, $A\cap B$ is locally linear regular at 
			$\paren{+\sqrt{1-\paren{\frac{1-R^2+a^2}{2a}}^2},\frac{1-R^2+a^2}{2a}}$. By \cref{prop:lin reg}
			we deduce subtransversality with constant $\kappa\equiv \kappa'\sqrt{2}$.
		\item The technical condition \eqref{ass:3} in \cref{thm:conv raar} is satisfied with
			\begin{align*}
				\sigma^2=\frac{1}{2   \lambda^{2}}
			\end{align*}
			by \cref{propn:technical ass for consistent}.
	\end{enumerate}			
	Thus, the assumptions of \cref{thm:conv raar} are satisfied and the relaxed Douglas-Rachford
	algorithm converges locally linearly to $\ubar$ with rate
	$1>c>\sqrt{1-\tfrac{\lambda^2}{{\kappa'}^2}}$ as long as the starting point is close enough to $\ubar$.
	
	Similarly, this argument can be repeated for 
	$\paren{-\sqrt{1-\paren{\frac{1-R^2+a^2}{2a}}^2},\frac{1-R^2+a^2}{2a}}$,
	which shows that, in this situation, both subtransversality and the technical condition 
	at the two points in the intersection are satisfied.
	
	Note that the point $\paren{0,-1}$ does not to lead to a fixed point of the relaxed 
	Douglas-Rachford algorithm. Whereas for the Alternating 
	Projection method, defined by the operator $P_{A}P_{B}$, $\paren{0,-1}$ 
	is always a fixed point.	
	In particular, for any $\lambda\in (0,1)$ the fixed point set of $\raar$ does
	not contain any point of the form $(0,y)$ for $y \in \Rbb$.
\end{eg}
\begin{eg}[nonintersecting separable circles]\label{eg: 2. case}	
	This example consists of two circles in $\Rbb^2$ that are shifted by some vector 
	in $\Rbb^2$ such that they do not intersect in any point. Let $R>0$ and define
	\begin{align*}
		A&\equiv \set{x\in \Rbb^2 }{ \norm{x}=1}\\
		\text{and}\quad	B&\equiv \set{x\in \Rbb^2 }{ \norm{x-(2+R,0)}=R}.
	\end{align*}
		\begin{figure}[ht]
		\centering
		\begin{tikzpicture}
			\draw[thick,->] (-2,0) -- (5,0);
			\draw[thick,->] (0,-1.5) -- (0,1.5);
			\draw (0,0) circle (1cm);
			\draw (0.7,0.3) node {$A$};
			\draw (3,0) circle (1cm);
			\draw (3.7,0.3) node {$B$};
		\end{tikzpicture}
		\caption{\cref{eg: 2. case} for $R=1$}
	\end{figure}
	The only fixed point of $\raar$ on $A$ and $B$ is given by
	\begin{align*}
		\ubar=\paren{2,0}-\frac{\lambda}{1-\lambda}\paren{1,0}
	\end{align*}
	for $\lambda \in (0,1)$. The following statements regarding the assumptions made in
	\cref{thm:conv raar} are easily verified either by hand or with the help of symbolic
	computation.
	\begin{enumerate}[(i)]
		\item $S_0\equiv \Fix \raar =\klam{\ubar}$.
		\item The difference vector is unique as well and given by $\zetabar=\paren{(1,0),-\frac{\lambda}{1-\lambda}\paren{1,0},(-1,0),\frac{\lambda}{1-\lambda}\paren{1,0}}$.
		\item As noted on \cref{eg:1}\eqref{eg:1ii} the assumptions of 
			\cref{thm: singlevalued at fixed points} are satisfied 
			for neighborhoods chosen as tubes.
		\item We bounded the modulus of subtransversality $\kappa$ from below with the help of symbolic computation as follows 
			\begin{align*}
				\kappa^2>\frac{8   {\left(R^{2} + 2   R + 1\right)}}{R^{2} + 2   R + 5}.
			\end{align*}
		\item Likewise, we can bound the technical assumption \eqref{ass:3} in \cref{thm:conv raar} from below. Due to the complexity of the constant we omit it here and refer the reader to the Sage worksheet posted at \url{http://vaopt.math.uni-goettingen.de/en/publications.php}.
	\end{enumerate}
	Thus, the assumptions of \cref{thm:conv raar} are satisfied and the relaxed Douglas-Rachford
	algorithm converges locally linearly to $\ubar$ with rate $1>c>\sqrt{1-\tfrac{1}{(\kappa\sigma)^2}}$
	as long as the starting point is close enough to $\ubar$.
 \end{eg}
\begin{eg}[nonintersecting, nonseparable and nonconcentric circles]\label{eg: 3. case}
	This example consists of two sets having not the same center
	and one of the circles surrounds the other one. Let $R>0$ and set 
	\begin{align*}
		A&\equiv \set{x\in \Rbb^2 }{ \norm{x}=1}\\
		B&\equiv \set{x\in \Rbb^2 }{\norm{x-(0,-\frac{1}{2}-R)}=2+R}.
	\end{align*}
		\begin{figure}[ht]
		\centering
		\begin{tikzpicture}[scale=0.7]
			\draw[thick,->] (-3.5,0) -- (3.5,0);
			\draw[thick,->] (0,-5) -- (0,2);
			\draw (0,0) circle (1cm);
			\draw (0.7,0.3) node {$A$};
			\draw (0,-1.5) circle (3cm);
			\draw (1.4,0.6) node {$B$};
		\end{tikzpicture}
		\caption{\cref{eg: 3. case} for $R=1$}
	\end{figure}
	Our analysis considers the fixed point 
	\begin{align*}
		\ubar=\paren{0,\frac{3}{2}}-\frac{\lambda}{1-\lambda}\paren{0,\frac{1}{2}}
	\end{align*}
	of $\raar$ on $A$ and $B$ for $\lambda<2/3$. The following 
	statements regarding the assumptions made in
	\cref{thm:conv raar} are easily verified either by hand or with the help of symbolic
	computation.
	\begin{enumerate}[(i)]
		\item $S_0\equiv \klam{\ubar}\in\Fix \raar $.
		\item The difference vector is unique as well and given by 
			\[\zetabar=\paren{\paren{0,\frac{1}{2}}, 
		-\frac{\lambda}{1-\lambda}\paren{0,\frac{1}{2}},
		-\paren{0,\frac{1}{2}},
		-\frac{\lambda}{1-\lambda}\paren{0,\frac{1}{2}}}.\]
		\item Similar to the analysis made in \cref{eg:1}\eqref{eg:1ii} the assumptions of 
			\cref{thm: singlevalued at fixed points} are satisfied 
			for neighborhoods chosen as tubes.
		\item We bounded the modulus of subtransversality $\kappa$ from below with the help of symbolic computation as follows 
			\begin{align*}
				\kappa^2>\frac{9   {\left(4   R^{2} + 12   R + 9\right)}}{2   R^{2} + 6   R + 9}.
			\end{align*}
		\item Likewise, we can bound the technical assumption \eqref{ass:3} in \cref{thm:conv raar} from below. Due to the complexity of the constant we omit it here and refer the reader to the Sage worksheet posted at \url{http://vaopt.math.uni-goettingen.de/en/publications.php}.
	\end{enumerate}
	Thus, the assumptions of \cref{thm:conv raar} are satisfied and the relaxed Douglas-Rachford
	algorithm converges locally linearly to $\ubar$ with rate $1>c>\sqrt{1-\tfrac{1}{(\kappa\sigma)^2}}$
	as long as  as long as the starting point is close enough to $\ubar$.
\end{eg}
\begin{eg}[nonintersecting, nonseparable concentric circles]\label{eg: 4. case}	
	In comparison to \cref{eg: 3. case}, the only thing we change is that we do not allow
	the circles to have different centers anymore. Let $R>0$ and define
	\begin{align*}
		A\equiv \set{x\in \Rbb^2 }{ \norm{x}=1}, \qquad 
		B\equiv \set{x\in \Rbb^2 }{\norm{x}=R},
	\end{align*}
	where we restrict $R$ to be strictly greater that $1$, i.e. $R>1$\footnote{For $R<1$ we can 
	change the roles of $A$ and $B$, which results in the situation presented here.}.
		\begin{figure}[ht]
		\centering
		\begin{tikzpicture}
			\draw[thick,->] (-2.5,0) -- (2.5,0);
			\draw[thick,->] (0,-2.5) -- (0,2.5);
			\draw (0,0) circle (1cm);
			\draw (0.7,0.3) node {$A$};
			\draw (0,0) circle (2cm);
			\draw (1.5,0.7) node {$B$};
		\end{tikzpicture}
		\caption{\cref{eg: 4. case} for $R=2$}
	\end{figure}
	Our analysis focuses on the fixed point
	\begin{align*}
		\ubar=\paren{0,R}-\frac{\lambda}{1-\lambda}\paren{0,R-1}
	\end{align*}
	of $\raar$ on $A$ and $B$. Note that it is enough to consider 
	$\ubar$ to get the analysis for any other fixed point due to the symmetry of
	the problem instance.
	
	Unfortunately, we were unable to verify the technical assumption \eqref{ass:3} in \cref{thm:conv raar}.
	
	Nevertheless, this example is subtransversal. The modulus of subtransversality $\kappa $ is
	bounded as follows
	\begin{align*}
		\kappa^2>\frac{2   R^{2}}{R^{4} - 2   R^{3} + 2   R^{2} - 2   R + 1}.
	\end{align*}
\end{eg}
\begin{eg}[tangential circles]\label{eg: 5. case}
	\cref{eg: 5. case} consists of 2 circles touching at a single point. Let $R>0$ and define
	\begin{align*}
		A&\equiv \set{x\in \Rbb^2 }{\norm{x}=1}\\
		B&\equiv \set{x\in \Rbb^2 }{ \norm{x-(R+1,0)}=R}.
	\end{align*}
	\begin{figure}[ht]
		\centering
		\begin{tikzpicture}
			\draw[thick,->] (-1.5,0) -- (3.5,0);
			\draw[thick,->] (0,-1.5) -- (0,1.5);
			\draw (0,0) circle (1cm);
			\draw (0.7,0.3) node {$A$};
			\draw (2,0) circle (1cm);
			\draw (2.7,0.3) node {$B$};
		\end{tikzpicture}
		\caption{\cref{eg: 5. case} for $R=1$}
	\end{figure}	
	Our convergence analysis focuses on the only point in the intersection of those 
	two sets, namely
	\begin{align*}
		\ubar=\paren{1,0}.
	\end{align*}	
	The following statements regarding the assumptions made in
	\cref{thm:conv raar} are easily verified either by hand or with the help of symbolic
	computation.
	\begin{enumerate}[(i)]
		\item $S_0\equiv \klam{\ubar}\in\Fix \raar $.
		\item The difference vector is unique as well and given by $\zetabar=\paren{(0,0),(0,0),(0,0),(0,0)}$.
		\item The sets $A$ and $B$ satisfy the assumptions of \cref{thm: singlevalued at fixed points}
			at $\ubar$ with neighborhoods $U_1$ and $U_2$ being open balls around $\ubar$, that is
			$\Ball_\delta(\ubar)$, for $\delta\in (0,1)$.
		\item The technical condition \eqref{ass:3} in \cref{thm:conv raar} is satisfied with
			\begin{align*}
				\sigma^2=\frac{1}{2   \lambda^{2}}
			\end{align*}
			by \cref{propn:technical ass for consistent}.
		\item However, this example is not subtransversal when examining it in $\Rbb^2$. Checking equivalently 
			linear regularity, which is fine since we are looking at a point in the intersection of the 
			two sets (see \cref{prop:lin reg}), yield a value of 
			\begin{align}\label{eq:value of lin reg}
				\frac{2   R}{{\left(R + 1\right)} b}
			\end{align}
			where we parametrized a point in the neighborhood of $\ubar$ intersected with $A$ as
			\begin{align*}
				(\sqrt{1-b^2},b), b\in [1,-1].
			\end{align*}
			As $b\to 0$ (in other words, for  points close to $\ubar$) the ratio \eqref{eq:value of lin reg}
			tends to $\infty$. This
			implies that \cref{eg: 5. case} cannot be linearly regular at the 
			point $\ubar=(1,0)$ and thus is not subtransversal there.
	\end{enumerate}
	The assumptions of \cref{thm:conv raar} therefore are not satisfied.  In light of the necessity 
	of metric subregularity for linear convergence \cite[Theorem 2]{LukTebTha18}, we 
	conclude that $\raar$ {\em cannot} be linearly convergent in this case (though it might be 
	sublinearly convergent). 
\end{eg}
\begin{rem}
As shown in the examples above the constants involved 
for both subtransversality and the technical condition \eqref{ass:3} in \cref{thm:conv raar}
can be cumbersome although the actual problem might look relatively easy. 
We also see in  \cref{eg: 2. case}-\cref{eg: 4. case} that the presence of 
subtransversality  in the inconsistent case can come as a surprise.  Our inability 
to show the technical condition \eqref{ass:3} in \cref{eg: 4. case} indicates that 
this condition characterizes the regularity or nondegeneracy of the underlying model space
for the algorithm. 
Further investigation of this property is needed. 
\end{rem}

%
%

%
%

\end{document}